\newtheorem{thm}{Theorem}[section]
\newtheorem{defn}[thm]{Definition}
\newtheorem{prop}[thm]{Proposition}
\newtheorem{lem}[thm]{Lemma}
\newtheorem{cor}[thm]{Corollary}
\newtheorem{rem}[thm]{Remark}
\numberwithin{equation}{section}
\begin{document}
\title{Singular flows with time-varying weights}
\author{
Immanuel Ben-Porat%
     \thanks{Universit\"at Basel
 Spiegelgasse 1
 CH-4051 Basel, Switzerland  {Email: immanuel.ben-porath@unibas.ch}} %
\and
    Jos\'e A. Carrillo%
     \thanks{Mathematical Institute, University of Oxford, Oxford OX2 6GG, UK.  {Email: carrillo@maths.ox.ac.uk}} %
     \and
    Pierre-Emmanuel Jabin%
     \thanks{Pennsylvania State University, 109 McAllister University Park, PA 16802 USA,  {Email: pejabin@psu.edu}} %
}

\maketitle

\begin{abstract}
We study the mean field limit for singular dynamics with time evolving weights. Our results are an extension of the work of Serfaty \cite{duerinckx2020mean} and Bresch-Jabin-Wang \cite{bresch2019modulated}, which consider singular Coulomb flows with weights which are constant time. The inclusion of time dependent weights necessitates the commutator estimates of \cite{duerinckx2020mean,bresch2019modulated}, as well as a new functional inequality. The well-posedness of the mean field PDE and the associated system of trajectories is also proved. 
\end{abstract}


\section{Introduction}

We aim to study the well posedness and the mean field limit of the following doubly non-local transport PDE 
\begin{equation}
\partial_{t}\mu-\mathrm{div}(\mu\,\mathbf{a}\star\mu)=h\left[\mu\right],\ \mu(0,x)=\mu_{0}\,,\label{eq:-13}
\end{equation}
where $\mathbf{a}:\mathbb{R}^{d}\rightarrow\mathbb{R}^{d}$ is a given
vector field and $h\left[\mu\right]$ is the signed measure given by 
\begin{equation}
h\left[\mu\right]\coloneqq\mu(x)\int_{\mathbb{R}^{d}}S(x-y)\mu(y)dy\,,\label{eq:-14}
\end{equation}
for some function $S:\mathbb{R}^{d}\rightarrow\mathbb{R}$.
We use $\mu(x) dx$ as the notation for integrating against the measure $\mu$ independently of being absolutely continuous or not with respect to the Lebesgue measure. Moreover, we shall denote by $\mu(x)$ its density with respect to the Lebesgue measure in case it is absolutely continuous. Here, both $\mathbf{a}$ and $S$ satisfy some appropriate structural hypothesis specified in Section 2. The initial data $\mu_{0}$ belongs to $\mathcal{P}(\mathbb{R}^{d})$, where $\mathcal{P}(\mathbb{R}^{d})$ is the set of  probability measures on $\mathbb{R}^{d}$. The solutions $\mu(t,\cdot)\in \mathcal{P}(\mathbb{R}^{d})$ sought after are curves of probability measures, and thus the source term $h\left[\mu\right]$ has to have zero average. 

The existence and uniqueness theory of PDEs of the type (\ref{eq:-13}) or variants thereof has been
handled in several works: in \cite{piccoli2014generalized} the existence and uniqueness
of measure valued solutions has been established for non-negative
source terms $h\left[\mu\right]$ which satisfy a boundeness and Lipschitz
condition with respect to the Wasserstein distance. In \cite{piccoli2018measure},
a variant of the Wasserstein distance for signed measures has been
introduced and applied in order to remove the positivity assumption
on $h$. The non-positivity assumption is important if one wishes to study models in which $h\left[\mu\right]$ has zero mean. Both of these results do not cover the important case where $h\left[\mu\right]$ is given by the formula (\ref{eq:-14}), since such source terms typically satisfy the boundeness and Lipschitz conditions required only locally. This obstacle has been successfully overcome
in \cite{duteil2021mean}, in which the derivation of the PDE \eqref{eq:-13} as a mean field limit is also studied. 

Let us denote by $\mathbb{R}^{dN}\setminus \triangle_{N}$ the set of non-overlapping configurations, that is, the set of $\mathbf{x}_{N}:=(x_1,\dots,x_N)\in \mathbb{R}^{dN}$ such that $x_{i}\neq x_{j}$ for all $i\neq j$, and let us denote by $ \mathbb{M}^{N}$ the set of admissible weights, that is, the set of $\mathbf{m}_{N}:=(m_1,\dots,m_N)$ such that $m_i\geq 0$, with $\sum_{i=1}^{N} m_i=N$. 
We can define the empirical measure associated to this configuration as
\begin{align}
\mu_{N}\coloneqq \frac{1}{N}\stackrel[i=1]{N}{\sum}m_{i}\delta_{x_{i}}. \label{empirical intro}    
\end{align}
The following system of ODEs governs the particle dynamics:
\begin{equation}
\left\{ \begin{array}{lc}
\dot{x}_{i}^{N}(t)=-\frac{1}{N}\stackrel[j=1]{N}{\sum}m_{j}^{N}(t)\mathbf{a}(x_{i}^{N}(t)-x_{j}^{N}(t)),\ x_{i}^{N}(0)=x_{i}^{0,N}\\
\dot{m}_{i}^{N}(t)=\frac{1}{N}\stackrel[j=1]{N}{\sum}m_{i}^{N}(t)m_{j}^{N}(t)S(x_{i}^{N}(t)-x_{j}^{N}(t)),\ m_{i}^{N}(0)=m_{i}^{0,N}.
\end{array}\right.\label{eq:-23}
\end{equation}
Subject to several technical assumptions on the functions $\mathbf{a}$ and $S$, the ODE system (\ref{eq:-23}) has a globally well defined flow $(\mathbf{x}_{N}(t),\mathbf{m}_{N}(t))$, see \cite[Theorem 3]{ayi2021mean}, where $\mathbf{x}_{N}(t):=(x_1^N(t),\dots,x_N^N(t))\in \mathbb{R}^{dN}$ and $\mathbf{m}_{N}(t):=(m_1^N(t),\dots,m_N^N(t))\in \mathbb{M}^{N}$. We will denote by $\mu_{N}(t)$ the empirical measure associated to the system (\ref{eq:-23}), with a slight abuse of notation, as the empirical measure given by \eqref{empirical intro} associated to the configuration $(\mathbf{x}_{N}(t),\mathbf{m}_{N}(t))$.
The empirical measure is readily checked to be a distributional
solution of the PDE (\ref{eq:-13}). We say that \eqref{eq:-13} is the mean field limit of the interacting particle system \eqref{eq:-23} if 
\[
W_{1}(\mu_{N}(0),\mu_{0})\underset{N\rightarrow\infty}{\rightarrow}0\Longrightarrow W_{1}(\mu_{N}(t),\mu(t))\underset{N\rightarrow\infty}{\rightarrow}0,\ t\in[0,T],
\]
where $\mu(t)$ is the solution to the PDE \eqref{eq:-13} with initial
data $\mu_{0}$ at time $t$ and $W_1$ is the Wasserstein (or Monge-Kantorovich) distance. Before we proceed with the mathematical discussion, let us say a few words about the real life phenomena that the above models idealizes. Given $N$ agents interacting pairwise via a given interaction $\mathbf{a}$ with time evolving opinions $(x_{1}^N(t),\cdots,x_{N}^N(t))$ and  weights of influence $(m_{1}^N(t),\cdots,m_{N}^N(t))$ the system \eqref{eq:-23} describes the evolution both of the opinions and the weights of influence in time. As it can be seen from \eqref{eq:-23},  the $i$-th opinion and the $i$-th weight evolve according to a law which takes into consideration all the other involved opinions and influences. 

A notable example of a model of this type is the Kuramoto model and its variants \cite{aoki2009co,cestnik2024continuum}. Similar models appear also in crowd dynamics and have received mathematical treatment in \cite{mcquade2019social} and \cite{piccoli2018measure}. The scientific literature seems to be so far mostly limited to treat continuous interaction kernels, although Hegselmann-Krause type models \cite{ceragioli2021generalized} includes also jump discontinuities. To the best of our knowledge, the blow up singularities which are  considered in the present work have not yet been studied when the weights evolve in time.

Viewed as a system of opinion dynamics, the mean field limit can be interpreted as a large population limit. This limit has been proven
in \cite{duteil2021mean}, with an approach which resembles the celebrated Dobrushin
theorem \cite{dobrushin1979vlasov}. In \cite{ayi2021mean} a weak version of the mean field limit has been recovered through a graph limit approach. By ``weak'' we mean that the result is limited to a special choice of initial configurations (at the microscopic level) and a special
choice of initial data for the Cauchy problem \eqref{eq:-13}. Among other works which consider the graph limit and its link with the mean field limit we mention \cite{biccari2019dynamics,paul2022microscopic} as well as  \cite{ayi2024large} for a general overview. The case where the weights form a non-symmetric matrix, a scenario which arises for instance in Neuron dynamics, is also very interesting and has been analyzed in \cite{jabin2024mean}.  All of these results considered measure valued solutions,
and typically one has to require that the functions $\mathbf{a}$
and $S$ at least satisfy some Lipschitz continuity. However, the
case where the function $\mathbf{a}$ admits discontinuities is also
relevant and requires a different mathematical treatment. In the recent
work \cite{ben2024mean}, the well posedness and mean field limit has been
established for the one dimensional attractive Coulomb kernel, which corresponds
to the choice  $\mathbf{a}(x)=\mathrm{sgn}(x)$ in one dimension. The argument proposed
there relies on recasting the limit PDE as a Burgers type equation only valid in one dimension,
and invoking Kruzkhov theory of conservation laws in order to prove
stability estimates for the resulting Burgers equation. It should be mentioned that singularities emerging from $S$ are also relevant and have been considered in \cite{ben2024graph,mcquade2019social}. 

The purpose of this work is twofold: to study the well posedness and
mean field limit for \eqref{eq:-13} for arbitrary dimension $d\geq3$ in the case where $\mathbf{a}$ exhibits a Coulomb type singularity, thereby addressing a question left open in \cite{ben2024mean}.
More precisely, we assume the following hypotheses (\textbf{H1}) on the functions $\mathbf{a}$ and $S$ in \eqref{eq:-13}:
\begin{itemize}
\item (\textbf{H1}-i): $\mathbf{a}=\mathbb{J}\nabla V$, with $V(x)$ the $d-$dimensional repulsive Coulomb interaction ($d\geq 3$), $V(x)=-\frac{|x|^{2-d}}{2-d}$. Here $\mathbb{J}$ is either a $d\times d$ anti-symmetric matrix or the identity matrix. 

\item (\textbf{H1}-ii): $S\in \mathcal{S}(\mathbb{R}^{d})$ is an odd ($S(-x)=-S(x)$) function.

\end{itemize}
Note that $\mathrm{div}(\mathbb{J}\nabla V)=0$ when $\mathbb{J}$ is anti-symmetric and $\mathrm{div}(\mathbb{J}\nabla V)=-\delta_{0}$ when $\mathbb{J}$ is the identity. In comparison to \cite{ben2024mean}, the difficulties which must be addressed
in the new singular settings are reflected on several levels:
First, one has to justify why  \eqref{eq:-23} has a globally
well defined flow. This result is classical when the weights are time independent and follows from the observation that initial separation of opinions is propagated in time, i.e. 
$$
x_{i}^{0,N}\neq x_{j}^{0,N}\Longrightarrow\underset{i\neq j}{\min}\left|x_{i}^N(t)-x_{j}^N(t)\right|>0.
$$
The inclusion of weights which evolve in time necessitates imposing a condition on $S$ which would prevent a Ricatti type blow-up, and this is why we impose the parity condition $S(x)=-S(-x)$. The main difficulty for obtaining the well-posedness at the macroscopic level lies in the fact that when handling discontinuous $V$ there is no reason to expect that the term $\mathbb{J}\nabla V\star\mu$ verifies a Lipschitz condition on the space of probability \textit{measures} with respect to the Wasserstein distance. This fact eliminates the possibility
of proving the stability estimate with respect to the Wasserstein distance. We are therefore led to restrict
the solutions considered to more well behaved function spaces, namely
$L^{p}$ or Sobolev spaces. Thirdly, since the stability estimate at our disposal
is restricted to $L^{p}$ spaces, we are unable to apply it directly
for the empirical measure in order to obtain the long time convergence.
The most difficult part of this work boils down to overcoming this
issue. Our main idea  is to make use of the functional inequalities
discovered in \cite{duerinckx2020mean,bresch2019modulated} as well as proving new functional inequalities which are necessary due to the inclusion of a source term. The functional inequalities in \cite{duerinckx2020mean} reflect  a particularly exciting development in the theory of mean field limits, as they allow for the first time to rigorously identify the mean field limit of Coulomb flows and have already found numerous applications both in classical mean field limits \cite{rosenzweig2020justification,han2021newton,menard2024mean} as well as in quantum many body systems \cite{rosenzweig2021quantum,golse2022mean}. Before stating our two main results, namely the well posedness and the mean field limit, we introduce the following hypotheses:

\vskip 6pt

(\textbf{H2}) The initial data $\mu_{0}\in\mathcal{P}(\mathbb{R}^{d})\cap W^{1,\infty}(\mathbb{R}^{d})$ is such that there is some $R>0$ with
$\mathrm{supp}(\mu_{0})\subset B(0,R)$.

\vskip 6pt

\begin{thm} \label{existence uniqueness intro}
Let assumptions (\textbf{H1})-(\textbf{H2}) hold. Then, there exists a unique solution to the problem \eqref{eq:-13}
with initial data $\mu_{0}$. Moreover, this solution satisfies $\mu \in C\left([0,T];L^{p}(\mathbb{R}^{d})\right)$ for all $1\leq p \leq \infty$ and $\mu \in L^{\infty}\left([0,T];W^{1,p}(\mathbb{R}^{d})\right)$ for all $1\leq p< \infty$. 
Furthermore, $\mu(t,\cdot)$ is compactly supported and its support satisfies 
\[
\mathrm{supp}(\mu(t,\cdot))\subset B(0,\overline{R})\,, \mbox{ for all } t\in[0,T],
\]
where $\overline{R}=\overline{R}(\left\Vert \mu_{0}\right\Vert _{L^\infty },R,T,\left\Vert S\right\Vert _{L^\infty})$.   
\end{thm}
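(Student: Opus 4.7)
The plan is to build existence through a mollification of the singular kernel combined with a priori estimates along the characteristic flow, and then to obtain uniqueness via an $L^{p}$ stability argument exploiting the Calder\'on--Zygmund structure of $\nabla\mathbf{a}$. Concretely, I would replace $\mathbf{a}$ by $\mathbf{a}_{\epsilon}=\mathbf{a}*\rho_{\epsilon}$ for a standard mollifier and solve
\[
\partial_{t}\mu^{\epsilon}-\mathrm{div}(\mu^{\epsilon}\,\mathbf{a}_{\epsilon}\star\mu^{\epsilon})=\mu^{\epsilon}(S\star\mu^{\epsilon}),\qquad \mu^{\epsilon}(0)=\mu_{0},
\]
by Picard iteration, the drift $-\mathbf{a}_{\epsilon}\star\mu^{\epsilon}$ being smooth and Lipschitz at fixed $\epsilon$; the associated classical flow $\dot{X}^{\epsilon}_{t}=-(\mathbf{a}_{\epsilon}\star\mu^{\epsilon}_{t})(X^{\epsilon}_{t})$ is then well-defined, and the oddness of $S$ guarantees $\int h[\mu^{\epsilon}]=0$, so the total mass is preserved.

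Three a priori bounds uniform in $\epsilon$ have to be extracted. For the $L^{\infty}$ bound, the representation
\[
\mu^{\epsilon}(t,X^{\epsilon}_{t}(x))=\mu_{0}(x)\exp\!\Big(\int_{0}^{t}\bigl[(\mathrm{div}\,\mathbf{a}_{\epsilon})\star\mu^{\epsilon}+S\star\mu^{\epsilon}\bigr](s,X^{\epsilon}_{s}(x))\,ds\Big)
\]
applies: in the antisymmetric case $\mathrm{div}\,\mathbf{a}_{\epsilon}=0$, while for $\mathbb{J}=I$ one has $(\mathrm{div}\,\mathbf{a}_{\epsilon})\star\mu^{\epsilon}=-\rho_{\epsilon}\star\mu^{\epsilon}\leq 0$, so in both situations the exponent is controlled by $T\|S\|_{\infty}$ and $\|\mu^{\epsilon}(t)\|_{\infty}\leq\|\mu_{0}\|_{\infty}e^{T\|S\|_{\infty}}$. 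For the support bound, splitting the convolution $\mathbf{a}\star\mu^{\epsilon}$ at radius $\|\mu^{\epsilon}\|_{\infty}^{-1/d}$ gives the interpolation $\|\mathbf{a}\star\mu^{\epsilon}\|_{\infty}\leq C\|\mu^{\epsilon}\|_{\infty}^{(d-1)/d}$, so $|\dot{X}^{\epsilon}|$ is bounded and $\mathrm{supp}(\mu^{\epsilon}(t))\subset B(0,R+Ct)$, with $C$ depending only on $\|\mu_{0}\|_{\infty}$, $\|S\|_{\infty}$ and $T$. For the $W^{1,p}$ bound with $1<p<\infty$, differentiating the PDE in $x$ produces a transport equation for $\nabla\mu^{\epsilon}$ whose forcing involves the Calder\'on--Zygmund operator $\mu^{\epsilon}\mapsto\nabla(\mathbf{a}\star\mu^{\epsilon})$, bounded on $L^{p}$; Gr\"onwall then closes the estimate. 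Combined with a bound on $\partial_{t}\mu^{\epsilon}$ in a dual Sobolev norm, Aubin--Lions delivers a subsequence converging strongly in $C([0,T];L^{q})$ for every $1\leq q<\infty$, and the nonlinear terms pass to the limit because $\mathbf{a}\star\mu^{\epsilon}$ converges strongly on compact sets.

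For uniqueness, the difference $\nu=\mu_{1}-\mu_{2}$ of two solutions with the same initial datum satisfies
\[
\partial_{t}\nu=\mathrm{div}(\nu\,\mathbf{a}\star\mu_{1})+\mathrm{div}(\mu_{2}\,\mathbf{a}\star\nu)+\mu_{1}(S\star\nu)+\nu(S\star\mu_{2}),
\]
and testing against $|\nu|^{p-1}\mathrm{sgn}(\nu)$, integrating by parts, and controlling the resulting terms via the $W^{1,p}$ regularity of the $\mu_{i}$ together with Calder\'on--Zygmund bounds, yields $\tfrac{d}{dt}\|\nu\|_{L^{p}}^{p}\leq C\|\nu\|_{L^{p}}^{p}$, hence $\nu\equiv 0$ by Gr\"onwall. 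The main obstacle is precisely this stability step: because $\mathbf{a}\star\mu$ is not Lipschitz in $x$ for $\mu$ merely in $L^{\infty}$, a Wasserstein stability argument is unavailable, which forces one into $L^{p}$; closing the estimate there requires both solutions to carry the $W^{1,p}$ regularity propagated in the existence step, so the existence and uniqueness estimates must be developed in tandem rather than sequentially.
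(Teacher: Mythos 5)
Your overall scheme for existence mirrors the paper's (mollify the kernel, propagate $L^\infty$, propagate the support, propagate $W^{1,p}$, extract a limit by compactness). Your $L^\infty$ bound along characteristics with the sign argument for $\mathrm{div}\,\mathbf{a}_\epsilon$ is in substance the paper's $L^p$ estimate taken to the limit $p\to\infty$, and your support bound via $\|\mathbf{a}\star\mu^\epsilon\|_\infty\lesssim\|\mu^\epsilon\|_\infty^{(d-1)/d}$ and bounded characteristic speed is simpler than the paper's test-function argument with $\varphi_\eta=(\eta+\mu_\varepsilon)^{-1}$ and also fine. The genuine gap is in your $W^{1,p}$ propagation. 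Testing the differentiated equation against $|\partial_j\mu^\epsilon|^{p-1}\,\mathrm{sgn}(\partial_j\mu^\epsilon)$, the coupling term between $\nabla(\mathbf{a}\star\mu^\epsilon)$ and $\nabla\mu^\epsilon$ can be absorbed into a Gr\"onwall inequality for $\sum_j\|\partial_j\mu^\epsilon\|_{L^p}^p$ only if you control $\|\nabla(\mathbf{a}\star\mu^\epsilon)\|_{L^\infty}$, \emph{not} $\|\nabla(\mathbf{a}\star\mu^\epsilon)\|_{L^p}$; and the Calder\'on--Zygmund operator $\mu\mapsto\nabla^2V\star\mu$ is not bounded on $L^\infty$. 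So ``Gr\"onwall then closes the estimate'' is not correct as written. The paper resolves this with the logarithmic near-boundedness estimate of Proposition~\ref{near boundedness}, roughly
\[
\|\nabla^2V\star\mu\|_{\infty}\lesssim\cdots+\|\mu\|_{\infty}\log\bigl(e+\|\mu\|_{C^{0,s}}\bigr),
\]
combined with the Morrey embedding $W^{1,p}\hookrightarrow C^{0,1-d/p}$, which produces a Gr\"onwall inequality of the form $\frac{d}{dt}\Omega\leq C(1+\Omega)\log(1+\Omega)$ and the doubly exponential bound $\Omega(t)\leq C(1+\Omega(0))^{e^{Ct}}$, not a simple exponential. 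This logarithmic loss is structural, and any correct proof must pass through it.

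For uniqueness you propose an $L^p$ stability estimate for the difference $\nu=\mu_1-\mu_2$, relying on the $W^{1,p}$ regularity of both solutions and Calder\'on--Zygmund bounds. The paper instead proves $\dot H^{-1}$-stability of the Coulomb interaction energy $\mathcal{E}(t)=\|\mu_1(t)-\mu_2(t)\|_{\dot H^{-1}}^2$, using that $\mu\mapsto h[\mu]$ is Lipschitz on $\dot H^{-1}$ (Corollary~\ref{h Lip coro}). These are genuinely different routes. The $\dot H^{-1}$ argument closes in essentially one Cauchy--Schwarz application, whereas your $L^p$ route forces you to handle the term $\int|\nu|^{p-1}(\nabla\mu_2)\cdot(\mathbf{a}\star\nu)\,\mathrm{sgn}\,\nu\,dx$ via a H\"older/Hardy--Littlewood--Sobolev interplay that needs the compact support of the solutions and is more delicate than you acknowledge; it is probably workable but not as immediate as you suggest. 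More importantly, the $\dot H^{-1}$ stability estimate is not merely a tool for uniqueness: it is the continuum template for the renormalized modulated energy bounds on $\mathcal{D}^1_N$ and $\mathcal{D}^2_N$ that drive the mean field limit in Section~5, so an $L^p$ uniqueness argument, while potentially correct, would not serve the rest of the paper.
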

\begin{thm}
\label{main thm intro} Let assumptions (\textbf{H1})-(\textbf{H2}) hold and let $\mu(t,\cdot)$ be the unique solution with initial data $\mu_{0}$  ensured via Theorem \ref{existence uniqueness intro}. Let $\mathbf{m}_{N}^{0}\in \mathbb{M}^{N}$ such that $m_{i}^{0,N}\leq M$ for some $M>0$ and  $\mathbf{x}_{N}^{0}\in \mathbb{R}^{dN}\setminus \triangle_{N}$. Denote by $(\mathbf{x}_{N}(t),\mathbf{m}_{N}(t))\in C([0,T];\mathbb{R}^{dN}\times\mathbb{R}^{N})$ the solution to the system of ODEs \eqref{eq:opinion dynamics sec 2} with initial data $(\mathbf{x}_{N}^{0},\mathbf{m}_{N}^{0})$ ensured via Theorem \ref{well posedness -1}. Then 

 \begin{align*}
   \mu_{N}(t,\cdot)\underset{N\rightarrow \infty}{ \rightarrow}\mu(t,\cdot)  \ \mbox{in the weak sense locally uniformly in time}
 \end{align*}  
 provided that $\mathcal{E}_{N}(0)\underset{N\rightarrow\infty}{\rightarrow}0$. 
\end{thm}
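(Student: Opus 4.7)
The plan is to follow the modulated-energy framework of Duerinckx--Serfaty \cite{duerinckx2020mean} and Bresch--Jabin--Wang \cite{bresch2019modulated}, adapting it to the time-varying weights and the source term $h[\mu]$. Concretely, I would work with a Coulomb modulated energy of the form
\begin{equation*}
\mathcal{E}_{N}(t):=\iint_{(\mathbb{R}^{d})^{2}\setminus\triangle}V(x-y)\,d(\mu_{N}-\mu)(t,x)\,d(\mu_{N}-\mu)(t,y),
\end{equation*}
with the usual smoothing of the diagonal in $\mu_{N}\otimes\mu_{N}$ so the self-term is finite. The repulsive Coulomb structure together with $\sum_{i}m_{i}=N$ yields $\mathcal{E}_{N}(t)\geq -o_{N}(1)$, and the standard Serfaty-type coercivity translates $\mathcal{E}_{N}(t)\to 0$ into the desired weak convergence $\mu_{N}(t)\to\mu(t)$ locally uniformly in time.

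The central step is to differentiate $\mathcal{E}_{N}$ in time. Using that $\mu_{N}$ and $\mu$ both satisfy $\partial_{t}\nu-\mathrm{div}(\nu\,\mathbf{a}\star\nu)=h[\nu]$ in the distributional sense (with the diagonal removed on the empirical side), a direct computation and integration by parts produce the splitting
\begin{equation*}
\frac{d}{dt}\mathcal{E}_{N}(t)=T_{N}(t)+H_{N}(t)+o_{N}(1),
\end{equation*}
where $T_{N}$ is the standard transport commutator
\begin{equation*}
T_{N}=-\iint_{(\mathbb{R}^{d})^{2}\setminus\triangle}\bigl(\mathbf{a}\star\mu(x)-\mathbf{a}\star\mu(y)\bigr)\cdot\nabla V(x-y)\,d(\mu_{N}-\mu)(x)d(\mu_{N}-\mu)(y),
\end{equation*}
and $H_{N}$ collects the contributions from $h[\mu_{N}]$, $h[\mu]$, and the weight dynamics $\dot m_{i}=\frac{1}{N}\sum_{j}m_{i}m_{j}S(x_{i}-x_{j})$. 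The antisymmetry of $\mathbb{J}$ (or the Coulombic cancellation when $\mathbb{J}=I$) is precisely what eliminates the dangerous self-interaction terms, exactly as in \cite{duerinckx2020mean,bresch2019modulated}, while the oddness of $S$ prevents a Riccati-type degeneracy in $H_{N}$, mirroring its role at the ODE level.

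The transport term is handled by invoking directly the commutator inequality of \cite{duerinckx2020mean,bresch2019modulated}, which delivers
\begin{equation*}
|T_{N}(t)|\leq C\,\|\nabla(\mathbf{a}\star\mu)\|_{L^{\infty}}\,\mathcal{E}_{N}(t)+o_{N}(1),
\end{equation*}
the Lipschitz bound on $\mathbf{a}\star\mu$ being supplied by Theorem \ref{existence uniqueness intro} together with Calderon--Zygmund / Riesz transform estimates applied to $\mu\in L^{\infty}\cap W^{1,p}$. The main obstacle, and the announced novelty of the paper, is the bound on $H_{N}$: it features triple-interaction expressions coupling the smooth nonlocal kernel $S$ with the singular $\nabla V$ against three copies of $\mu_{N}-\mu$, modulated by the weights $m_{i}\leq M$, and it does not fit into the commutator framework of \cite{duerinckx2020mean,bresch2019modulated}. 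The plan is to establish a new functional inequality of the form
\begin{equation*}
|H_{N}(t)|\leq C\bigl(M,\|\mu\|_{W^{1,\infty}},\|S\|_{W^{k,\infty}}\bigr)\,\mathcal{E}_{N}(t)+o_{N}(1),
\end{equation*}
exploiting the uniform weight bound $m_{i}(t)\leq M'$ on $[0,T]$ (first propagated from the initial bound using the oddness of $S$) and the Schwartz regularity of $S$. Combining the two estimates yields $\frac{d}{dt}\mathcal{E}_{N}(t)\leq C\,\mathcal{E}_{N}(t)+o_{N}(1)$, and Gronwall applied with the hypothesis $\mathcal{E}_{N}(0)\to 0$ closes the argument.
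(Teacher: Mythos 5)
Your outline is correct and follows essentially the same route as the paper: a modulated energy with the diagonal removed, a time-derivative splitting into a transport commutator term plus a source/weight term, the Duerinckx--Serfaty commutator estimate for the former, a new functional inequality bounding the $h$-contribution by $\mathcal{E}_{N}$ plus $o_{N}(1)$ for the latter, and Gr\"onwall closed by the asymptotic positivity and coercivity of the modulated energy. The paper's Proposition \ref{Time derivative of energy} matches your $T_{N}+H_{N}$ splitting, Theorem \ref{commutator estimate} handles $T_{N}$, and Lemma \ref{commutator estimate1} is precisely the ``new functional inequality'' you posit for $H_{N}$, built from the regularization machinery of Lemmas \ref{Regularization }, \ref{bound on convolution} and \ref{Bound on the non diag terms} together with the $\dot H^{-1}$-Lipschitz estimate of Lemma \ref{h Lip estimate }.
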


The paper unfolds as follows. In Section 2 we recap basic
structural properties of the model and state the main results. Afterwards, we include an important discussion in which we elaborate on the novelty of our method. Section
3  is devoted to proving the existence of solutions to \eqref{eq:-13} and the
stability estimate, from which the uniqueness follows at once. Section 4 is devoted to proving the global well-posedness of the ODE (\ref{eq:-23}). Finally, Section
5 is devoted to modifying the modulated energy approach introduced
in \cite{duerinckx2020mean} and extended in \cite{bresch2019modulated}, thus yielding the mean field limit.

\section{Preliminaries }

\subsection{The equation for the trajectories}

The dynamics that we consider are governed by the following system of
$(d+1)\times N$ ODEs 
\begin{equation}
\left\{ \begin{array}{lc}
\dot{x}_{i}^{N}(t)=-\frac{1}{N}\stackrel[j=1]{N}{\sum}m_{j}^{N}(t)\mathbb{J}\nabla V(x_{i}^{N}(t)-x_{j}^{N}(t)),\ x_{i}^{N}(0)=x_{i}^{0,N}\\
\dot{m}_{i}^{N}(t)=\frac{1}{N}\stackrel[j=1]{N}{\sum}m_{i}^{N}(t)m_{j}^{N}(t)S(x_{i}^{N}(t)-x_{j}^{N}(t)),\ m_{i}^{N}(0)=m_{i}^{0,N}
\end{array}\right..\label{eq:opinion dynamics sec 2}
\end{equation}
 The notation is as follows: the unknowns are $x_{i}^N\in\mathbb{R}^{d}$
and $m_{i}^N\in\mathbb{R}$ are referred to as opinions and weights
respectively, and are supplemented with initial data $x_{i}^{0,N},m_{i}^{0,N}$.

\begin{defn}
We say that $(\mathbf{x}_{N}(t),\mathbf{m}_{N}(t))\in C([0,T];\mathbb{R}^{dN}\times\mathbb{R}^{N})$
is a solution to the system of ODEs \eqref{eq:opinion dynamics sec 2} on $[0,T)$ ($T\leq\infty$)
if 
\[
\underset{i\neq j}{\min}\left|x_{i}
^{N}(t)-x_{j}^{N}(t)\right|>0,\ t\in[0,T)
\]
and for all $t\in[0,T)$ and all $1\leq i \leq N$ it holds that

\[
\left\{ \begin{array}{lc}
x_{i}^{N}(t)=x_{i}^{0,N}-\frac{1}{N}\stackrel[j=1]{N}{\sum}\displaystyle\int_{0}^{t}m_{j}^{N}(\tau)\mathbb{J}\nabla V(x_{i}^{N}(\tau)-x_{j}^{N}(\tau))d\tau\\
m_{i}^{N}(t)=m_{i}^{0,N}+\frac{1}{N}\stackrel[j=1]{N}{\sum}\displaystyle\int_{0}^{t}m_{i}^{N}(\tau)m_{j}^{N}(\tau)S(x_{i}^{N}(\tau)-x_{j}^{N}(\tau))d\tau.
\end{array}\right.
\]
We say that $(\mathbf{x}_{N}(t),\mathbf{m}_{N}(t))$ is a solution
to the system of ODEs \eqref{eq:opinion dynamics sec 2} with maximal life span $T>0$ if it is a solution on $[0,T)$ to \eqref{eq:opinion dynamics sec 2}, but is not a solution on $[0,T]$. 
\end{defn}
The following theorem establishes that there is a well-defined flow for the system of ODEs (\ref{eq:opinion dynamics sec 2}). The proof is postponed to Section \ref{WELL POSEDNESS SEC}.

\begin{thm}
\label{well posedness -1} Let hypothesis (\textbf{H1}) hold. Suppose that for any $N\in \mathbb{N}$ it holds that $\mathbf{x}_{N}^{0}\in \mathbb{R}^{dN}\setminus\triangle_{N}$, $\mathbf{m}_{N}^{0}\in \mathbb{M}_{N}$ and there is some $M>0$ such that for all $1\leq i\leq N$ it holds that 
\begin{equation*}
0\leq m_{i}^{0,N}\leq M.
\end{equation*} 
Then, the system of ODEs \eqref{eq:opinion dynamics sec 2} has a unique global solution $(\mathbf{x}_{N}(t),\mathbf{m}_{N}(t))\in C^{1}([0,\infty);\mathbb{R}^{dN}\times\mathbb{R}^{N})$
solution with initial data $(\mathbf{x}_{N}^{0},\mathbf{m}_{N}^{0})$. In particular $x_{i}^{N}(t)\neq x_{j}^{N}(t)$ for all $t \in [0,\infty)$ and all $i\neq j$. 
\end{thm}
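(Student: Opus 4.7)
The plan is to apply local Cauchy--Lipschitz theory and then run two a priori estimates in parallel: one preventing Riccati-type blow-up of the weights, and one preventing collisions of opinions. On the open set $\mathcal{O}:=\{(\mathbf{x},\mathbf{m})\in\mathbb{R}^{dN}\times\mathbb{R}^{N}:x_{i}\neq x_{j}\text{ for }i\neq j\}$ the right-hand side of \eqref{eq:opinion dynamics sec 2} is $C^{\infty}$ and locally Lipschitz, so the Picard--Lindel\"of theorem yields a unique $C^{1}$ solution on a maximal interval $[0,T^{*})$ obeying the usual blow-up alternative: if $T^{*}<\infty$, then either two opinions collide or $|\mathbf{m}(t)|\to\infty$ as $t\uparrow T^{*}$.

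The weight estimate comes from the odd symmetry of $S$ in (\textbf{H1}-ii): skew-symmetrising in $(i,j)$ gives $\frac{d}{dt}\sum_{i}m_{i}^{N}=\tfrac{1}{N}\sum_{i,j}m_{i}^{N}m_{j}^{N}S(x_{i}^{N}-x_{j}^{N})=0$, so $\sum_{i}m_{i}^{N}(t)=N$ on $[0,T^{*})$. Combined with $\|S\|_{L^\infty}<\infty$, the multiplicative structure $\dot{m}_{i}^{N}=m_{i}^{N}\cdot\bigl(\tfrac{1}{N}\sum_{j}m_{j}^{N}S(x_{i}^{N}-x_{j}^{N})\bigr)$ produces $|\dot{m}_{i}^{N}|\leq\|S\|_{L^\infty}m_{i}^{N}$ and hence, via Gr\"onwall, the two-sided exponential bounds $m_{i}^{0,N}e^{-\|S\|_{L^\infty}t}\leq m_{i}^{N}(t)\leq m_{i}^{0,N}e^{\|S\|_{L^\infty}t}$. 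This preserves nonnegativity, rules out blow-up of $|\mathbf{m}|$ on any finite interval, keeps $\mathbf{m}^{N}(t)\in\mathbb{M}^{N}$, and forces any initially zero weight to remain zero.

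For the collision estimate I would introduce the Coulomb interaction energy
\begin{equation*}
\Phi(t):=\frac{1}{N^{2}}\sum_{i\neq j}m_{i}^{N}(t)\,m_{j}^{N}(t)\,V(x_{i}^{N}(t)-x_{j}^{N}(t)),
\end{equation*}
which is nonnegative since $V\geq 0$ in the repulsive case $d\geq 3$. Differentiating and inserting \eqref{eq:opinion dynamics sec 2} splits $\dot{\Phi}$ into a reaction contribution from $\dot{m}_{i}$, bounded by $2\|S\|_{L^\infty}\Phi$ via $V,m_{k}\geq 0$ and $\sum_{k}m_{k}=N$, and a transport contribution
\begin{equation*}
-\frac{2}{N^{3}}\sum_{\substack{i\neq j\\ k\neq i}}m_{i}m_{j}m_{k}\,\mathbb{J}\nabla V(x_{i}-x_{k})\cdot\nabla V(x_{i}-x_{j}).
\end{equation*}
When $\mathbb{J}$ is antisymmetric this vanishes: the $k=j$ piece is killed pointwise by $a\cdot\mathbb{J}a=0$, and the triple-distinct piece equals its own negative under the swap $j\leftrightarrow k$ combined with $a\cdot\mathbb{J}b=-b\cdot\mathbb{J}a$. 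When $\mathbb{J}=\mathrm{Id}$, recognising $\sum_{j\neq i}m_{j}\nabla V(x_{i}-x_{j})=-N\dot{x}_{i}$ collapses the sum into $-\frac{2}{N}\sum_{i}m_{i}|\dot{x}_{i}|^{2}\leq 0$. In both cases $\dot{\Phi}\leq 2\|S\|_{L^\infty}\Phi$, so Gr\"onwall together with $\Phi(0)<\infty$ (from the initial non-overlap $\mathbf{x}_{N}^{0}\in\mathbb{R}^{dN}\setminus\triangle_{N}$) yields $\Phi(t)\leq\Phi(0)e^{2\|S\|_{L^\infty}t}$. The pointwise inequality $2m_{i}m_{j}V(x_{i}-x_{j})/N^{2}\leq\Phi$ together with the weight lower bound then translates into a quantitative lower bound on $|x_{i}^{N}(t)-x_{j}^{N}(t)|$ for every pair with $m_{i}^{0,N}m_{j}^{0,N}>0$. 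Particles carrying $m_{i}^{0,N}=0$ evolve as passive tracers in the smooth velocity field generated by the positive-weight trajectories; their non-collision with other tracers follows from uniqueness of ODEs, while non-collision with active particles is secured either by monitoring $V$ along each tracer (repulsion when $\mathbb{J}=\mathrm{Id}$, tangency $(x_{i}-x_{j})\cdot\mathbb{J}\nabla V(x_{i}-x_{j})=0$ when $\mathbb{J}$ is antisymmetric) or by perturbing to strictly positive weights and passing to the limit.

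The main delicate point is the energy identity with time-varying weights: one must verify that the purely algebraic cancellations producing the favourable sign (or outright vanishing) of the transport contribution in the classical constant-weight Coulomb flow survive pointwise in $t$, and then isolate the $\dot{m}$-driven correction and dominate it by $\Phi$ itself using only $\|S\|_{L^\infty}$ and the conservation $\sum_{i}m_{i}=N$. Once both a priori bounds are in hand, the blow-up alternative is ruled out and the maximal solution extends to $T^{*}=\infty$.
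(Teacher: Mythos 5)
Your proposal follows the same overall scheme as the paper: conservation of total mass from the oddness of $S$, the two-sided bound $m_i^{N}(t)\in[\,m_i^{0,N}e^{-\|S\|_\infty t},\,m_i^{0,N}e^{\|S\|_\infty t}\,]$, propagation of the weighted Coulomb energy $\Phi(t)=\tfrac{1}{N^{2}}\sum_{i\neq j}m_i m_j V(x_i-x_j)$ through $\dot\Phi\leq 2\|S\|_\infty\Phi$ (the transport piece vanishing for $\mathbb{J}$ antisymmetric and having a sign for $\mathbb{J}=\mathrm{Id}$), a Gr\"onwall bound, and an ensuing lower bound on the opinion separation which, via the blow-up alternative, extends the short-time solution to $[0,\infty)$. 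The presentational differences --- Picard--Lindel\"of on the open set of non-overlapping configurations rather than the paper's truncation $V_\varepsilon$ of $V$, and the textbook blow-up alternative rather than the paper's explicit concatenation construction --- are both fine and essentially equivalent; you should just also observe that $\mathbf{x}_N(t)$ cannot escape to infinity in finite time once the separation and weight bounds hold, so the alternative really reduces to collision.

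Where you genuinely go beyond the paper is in isolating the zero-weight ``tracer'' indices: you correctly point out that $\Phi$ only controls pairs with $m_i^{0,N}m_j^{0,N}>0$ because of the $m_im_j$ prefactor (the paper's own Step~1 in fact silently drops this prefactor, which is a gap in its argument unless one tacitly assumes the weights are bounded below). However, your sketch for tracers is not yet a proof. Monitoring $V$ along a tracer when $\mathbb{J}=\mathrm{Id}$ produces the cross term $-\sum_k m_k\nabla V(x_i-x_k)\cdot\dot x_k$, which is not obviously dominated by the tracer's own potential; the tangency identity only kills the self-interaction radial contribution in the antisymmetric case and leaves the contributions of all other active particles to be estimated --- a Gr\"onwall inequality on $|x_i-x_j|^{2}$ using the Lipschitz bound on $\nabla V$ at distance $\gtrsim\rho_0$ from the other active particles (with $\rho_0$ the active--active separation provided by $\Phi$) would close that case; and the ``perturb to strictly positive weights and pass to the limit'' route degrades precisely because the separation bound coming from $\Phi$ scales like $(m_i^{0,N}m_j^{0,N})^{1/(d-2)}\to 0$. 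The conclusion --- tracers never collide --- is correct and provable, but as written these are gestures rather than estimates; since the paper does not raise the issue at all you are not behind its own proof, but a complete argument must carry these out.
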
 
\begin{rem} 
Since $V$ is the Coulomb interaction
there is some constant $\mathbf{C}>0$ such that 
\begin{equation*}
\left\Vert \nabla^{2}V\star \mu \right\Vert_{2} \leq \mathbf{C}\left\Vert \mu \right\Vert_{2}     
\end{equation*}
and  
\begin{equation*}
 0\leq \widehat{V}(x)\leq \frac{\mathbf{C}}{\left\vert x \right\vert^{2}},    
\end{equation*}
where $\widehat{V}$ is the Fourier transform of $V$. The first inequality is the Calderon-Zygmund inequality (see \cite[Theorem 4.12]{duoandikoetxea2024fourier}), and the second inequality is due to the fact that $-\Delta V=\delta_{0}$.  
\end{rem}

\begin{rem}
The assumption that $S$ is odd implies that any solution $(\mathbf{x}_{N}(t),\mathbf{m}_{N}(t))$ of the system of ODEs \eqref{eq:opinion dynamics sec 2} has $m_{i}^{N}(t) \geq 0$ for all $t\in [0,T]$ and  conserves the total weight, i.e. 
\begin{align*}
    \frac{1}{N} \stackrel[i=1]{N}\sum m_{i}^{N}(t)=1
\end{align*}
for all $t\in[0,T]$. In addition there is a constant $\overline{M}=\overline{M}(M,\left\Vert S\right\Vert_{\infty})$ such that for all $t\in [0,T]$ it holds that 
$
    m_{i}^{N}(t)\leq \overline{M}, \ i=1,\dots,N.
$
\label{conservation of total weight}
\end{rem}

\subsection{The mean field equation }

Recall that the mean field limit PDE that is expected to be derived
from (\ref{eq:opinion dynamics sec 2}) is 
\begin{equation}
\partial_{t}\mu-\mathrm{div}(\mu\mathbb{J}\nabla V\star\mu)=h\left[\mu\right],\ \mu(0,x)=\mu
_{0}.\label{mean field equation sec2}
\end{equation}
We introduce the notation $\mathbf{S}[\mu]\coloneqq\int_{\mathbb{R}^{d}}S(x-y)\mu(y)dy$ and $\mathbf{A}[\mu]\coloneqq\mathbb{J}\nabla V\star \mu$.  
We endow the space $C([0,T];\mathcal{P}(\mathbb{R}^{d}))$ with the metric 
$$
D(\mu,\nu)\coloneqq \underset{t\in[0,T]}{\sup} W_{1}(\mu(t,\cdot),\nu(t,\cdot))
$$
where $W_{1}$ is the Wasserstein distance (see \cite{villani2009optimal}). Our notion of weak solution is as follows.  

\begin{defn} \label{def of weak sol}
Assume $V$ and $S$ satisfy (\textbf{H1}) and let $\mu_{0}\in L^{1}(\mathbb{R}^{d})\cap L^{\infty}(\mathbb{R}^{d})$. A time dependent $\mu\in C([0,T]; L^{1}(\mathbb{R}^{d})) \cap L^\infty(0,T;L^{\infty}(\mathbb{R}^{d}))$
is said to be a weak solution of \eqref{mean field equation sec2}
if for all $\varphi\in C_{0}^{\infty}(\mathbb{R}^{d})$, 
it holds that 
\[
\frac{d}{dt}\int_{\mathbb{R}^{d}}\varphi(x)\mu(t,x)dx+\int_{\mathbb{R}^{d}}\nabla\varphi(x)\mathbf{A}[\mu](t,x)\mu(t,x)dx=\int_{\mathbb{R}^{d}}h[\mu](t,x)\varphi(x)dx,\ \mu(0,x)=\mu_0,
\]
in the distributional sense.
\end{defn}

Notice that the second term in the definition of weak solution to \eqref{mean field equation sec2} makes sense by a separation into short and long range terms in $\nabla V\star \mu$ using that $\mu(t,\cdot)\in L^{1}(\mathbb{R}^{d})\cap L^{\infty}(\mathbb{R}^{d})$ for all $t\in [0,T]$.

\begin{rem} 
As remarked in \cite{duteil2021mean}, an equivalent definition to Definition \ref{def of weak sol} is to demand that for each  $\varphi\in C^{\infty}((0,T)\times\mathbb{R}^{d}))$ it holds that 
\[
\int_{0}^{T}\int_{\mathbb{R}^{d}}\left[\partial_{t}\varphi(t,x)\mu(t,x)+\nabla\varphi(t,x)\mathbf{A}\left[\mu \right](t,x)\mu(t,x)-\varphi(t,x)h\left[\mu\right](t,x)\right] dxdt=0.
\]
\end{rem}

\begin{rem}\label{aux2}
In case $\nabla V$ is Lipschitz, the definition of weak solution to \eqref{mean field equation sec2} can be done assuming only $\mu_{0}\in L^{1}(\mathbb{R}^d)$.
\end{rem}

We will frequently need several structural properties of the source term which are not influenced by the regularity imposed on $S$. We summarize these properties in the following result. 

\begin{prop}
\textup{\cite[Proposition 9]{duteil2021mean}} For all $\mu\in\mathcal{P}(\mathbb{R}^{d})$, it holds that $\mathrm{supp}(h\left[\mu\right])=\mathrm{supp}(\mu)$, $\left|h\left[\mu\right]\right|\leq\left\Vert S\right\Vert _{\infty}\mu$, and 
$$
\int_{\mathbb{R}^{d}}h\left[\mu\right](x)dx=0.
$$
\end{prop}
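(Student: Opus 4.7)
The plan is to verify the three assertions directly from the pointwise formula $h[\mu](x)=\mu(x)\mathbf{S}[\mu](x)$, with $\mathbf{S}[\mu](x)=\int_{\mathbb{R}^d}S(x-y)d\mu(y)$. Each claim reduces to a short computation, and the only genuinely delicate point is the reverse support containment.

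First, I would establish the dominance bound. Since $\mu$ is a probability measure, the trivial pointwise control
$$|\mathbf{S}[\mu](x)|\leq \|S\|_\infty \int_{\mathbb{R}^d} d\mu=\|S\|_\infty$$
holds, and multiplying by the nonnegative measure $\mu$ immediately yields $|h[\mu]|\leq \|S\|_\infty \mu$. In particular $h[\mu]$ is absolutely continuous with respect to $\mu$ with bounded Radon--Nikodym derivative, which gives the containment $\mathrm{supp}(h[\mu])\subseteq \mathrm{supp}(\mu)$ for free.

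Second, the equality of supports requires the reverse containment. My plan here is to exploit the continuity of $\mathbf{S}[\mu]$, which is inherited from the Schwartz regularity of $S$ imposed in (\textbf{H1}-ii), together with the observation that on the relative interior of $\mathrm{supp}(\mu)$ the convolution $\mathbf{S}[\mu]$ cannot vanish on a set of positive $\mu$-measure without forcing strong structural constraints on $\mu$. This is the step where I expect the main obstacle, and I would essentially follow the argument of \cite{duteil2021mean} to close it.

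Third, the zero total mass is the cleanest of the three. By Fubini,
$$\int_{\mathbb{R}^d}h[\mu](x)dx=\iint_{\mathbb{R}^{2d}}S(x-y)d\mu(x)d\mu(y),$$
and relabelling $x\leftrightarrow y$ together with the oddness $S(-z)=-S(z)$ from (\textbf{H1}-ii) shows that this double integral equals its own negative, and therefore must vanish.
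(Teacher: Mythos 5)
The paper does not prove this proposition; it simply cites it as \cite[Proposition 9]{duteil2021mean}, so there is no in-paper argument to compare against. Your computations for the dominance bound and for the vanishing total mass are correct and are the natural ones: $|\mathbf{S}[\mu]|\leq\|S\|_\infty$ because $\mu$ is a probability measure, which gives $|h[\mu]|\leq\|S\|_\infty\mu$ and hence also the inclusion $\mathrm{supp}(h[\mu])\subseteq\mathrm{supp}(\mu)$; and Fubini together with the substitution $x\leftrightarrow y$ and the oddness of $S$ forces $\int h[\mu]\,dx$ to equal its own negative.

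The step you flag as "the main obstacle'', namely the reverse containment $\mathrm{supp}(\mu)\subseteq\mathrm{supp}(h[\mu])$, is in fact not merely difficult: as stated for all $\mu\in\mathcal{P}(\mathbb{R}^d)$ the equality of supports is false. Take $\mu=\delta_0$. Since $S$ is odd, $S(0)=0$, so $\mathbf{S}[\delta_0](0)=S(0)=0$ and
\begin{equation*}
h[\delta_0]=\mathbf{S}[\delta_0](0)\,\delta_0=0,
\end{equation*}
whose support is empty, while $\mathrm{supp}(\delta_0)=\{0\}$. More generally, any probability measure symmetric about the origin makes the odd function $\mathbf{S}[\mu]$ vanish at $0$, so no appeal to ``strong structural constraints on $\mu$'' of the kind you gesture at can rescue a general argument: the sketch you outline for the reverse inclusion would not close. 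The inclusion $\mathrm{supp}(h[\mu])\subseteq\mathrm{supp}(\mu)$, which you do prove correctly, is the part actually used later in the paper; the displayed equality as written appears to be an over-statement of the cited result, and you should not attempt to prove it for arbitrary $\mu\in\mathcal{P}(\mathbb{R}^d)$.
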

\subsection{Comments on the method}
The most technically involved part of this work is the derivation of the PDE (\ref{mean field equation sec2}) as a mean field limit from the dynamics (\ref{eq:opinion dynamics sec 2}). Therefore we turn the reader's attention to several key points in the proof. First, let us consider the case where $S=0$. In this case the PDE (\ref{mean field equation sec2}) is homogeneous, and the weights are constant in time - for simplicity we can take $m_{i}^{N}=\frac{1}{N}$ for all $1\leq i \leq N$. The method in \cite{duerinckx2020mean} rests upon a clever renormalization argument of a weak strong stability principle for the limit PDE. Let us introduce first the weak strong stability principle at the level of the limit PDE followed by a brief outline of the renormalization procedure carried out in \cite{duerinckx2020mean}.

\textbf{The weak strong stability principle}. Given solutions $\mu_{1}(t,\cdot),\mu_{2}(t,\cdot)$ to the PDE (\ref{mean field equation sec2}),  consider the modulated energy 
\begin{align*}
 \mathcal{E}(t)\coloneqq\int_{\mathbb{R}^{d}}\left(\mu_{1}-\mu_{2}\right)(t,x)V\star\left(\mu_{1}-\mu_{2}\right)(t,x)dx.
 \end{align*}
A standard calculation reveals that 
\begin{align}
\frac{d}{dt}\mathcal{E}(t)&\leq-\int_{\mathbb{R}^{d}\times\mathbb{R}^{d}}\left(\mathbb{J}\nabla V\star\mu_{2}(t,x)-\mathbb{J}\nabla V\star\mu_{2}(t,y)\right)\nabla V(x-y)\left(\mu_{1}(t,\cdot)-\mu_{2}(t,\cdot)\right)^{\otimes2}dxdy\nonumber\\
&\leq 2\underset{t\in[0,T]}{\sup}\left\Vert \nabla^{2} V\star\mu_{2}(t,\cdot) \right\Vert_{\infty}\int_{\mathbb{R}^{d}} \left\vert \nabla V\star\left(\mu_{1}(t,\cdot)-\mu_{2}(t,\cdot)\right)\right\vert^{2}dx\nonumber\\
&\leq 2\underset{t\in[0,T]}{\sup}\left\Vert \nabla^{2} V\star\mu_{2}(t,\cdot) \right\Vert_{\infty}\mathcal{E}(t) ,
\label{stability sketch }
\end{align}
where we used the identity 
\begin{equation}
\int_{\mathbb{R}^{d}\times\mathbb{R}^{d}} V(x-y)\mu(x)\mu(y)dxdy=\int_{\mathbb{R}^{d}\times\mathbb{R}^{d}}\left\vert\nabla V\star \mu\right\vert^{2}(x)dx 
\label{interaction in Fourier}    
\end{equation}
for any $\mu\in \dot{H}^{-1}(\mathbb{R}^{d})$, $d\geq 3$, which can be readily seen in Fourier. Assuming that $\mu_{2}(0,\cdot)\in W^{1,p}(\mathbb{R}^{d})$  (for $p>1$ sufficiently large) and that $\mu_{2}$ enjoys propagation of Sobolev regularity, it can be shown that $\underset{t\in[0,T]}{\sup}\left\Vert \nabla^{2} V\star\mu_{2}(t,\cdot) \right\Vert_{\infty}\lesssim \left\Vert \mu_{2}(0,\cdot) \right\Vert_{W^{1,p}}$. Thus the inequality (\ref{stability sketch })
yields the stability estimate 
\begin{align*}
\mathcal{E}(t)\leq e^{Ct} \mathcal{E}(0)  \end{align*}
 for some constant $C=C(\left\Vert \mu_{2}(0,\cdot) \right\Vert_{W^{1,p}})$. It is remarkable that this argument necessitates only regularity on one of the solutions involved. 
 
 \textbf{Renormalization}. We now define the renormalized modulated energy associated to $\mu$ and $\mu_N$ as
\begin{align}\label{renomenergyNconf}
\mathcal{E}_{N}(\mu,\mu_N)&:=\int_{x\neq y}V(x-y)\left(\mu_{N}(\cdot)-\mu(\cdot)\right)^{\otimes2}dxdy.
\end{align}
Given a solution $\mu(t,\cdot)$ to the PDE \eqref{mean field equation sec2} and a solution $\mathbf{x}_{N}(t)$ to the system of ODEs \eqref{eq:opinion dynamics sec 2}, we define the renormalized modulated energy associated to them as
\begin{align}\label{renomenergyN}
\mathcal{E}_{N}(t):=\mathcal{E}_{N}(\mu(t,\cdot),\mu_N(t,\cdot)).
\end{align}
The definitions \eqref{renomenergyNconf} and \eqref{renomenergyN} extend in the obvious manner in the case of weighted empirical measures. A standard calculation reveals that 
\begin{align*}
\frac{d}{dt}\mathcal{E}_{N}(t)\leq&-\int_{x\neq y}\left(\mathbb{J}\nabla V\star\mu(t,x)-\mathbb{J}\nabla V\star\mu(t,y)\right)\nabla V(x-y)\left(\mu_{N}(t,\cdot)-\mu(t,\cdot)\right)^{\otimes2}dxdy.
\end{align*}
Note the removal of the diagonal in the integrals above, which is necessary due to the singularity of $V$ at the origin. The functional inequality which allows to close the estimate, and which reflects the outstanding novelty of  \cite{duerinckx2020mean} is that for any given configuration $\mathbf{x}_{N}\in \mathbb{R}^{dN}\setminus \triangle_{N}$,  bounded probability density $\mu$ and bounded Lipschitz vector field $u$ there holds the inequality 
\begin{align}
\left\vert\int_{x\neq y}\left(u(x)-u(y)\right)\nabla V(x-y)\left(\mu_{N}-\mu\right)^{\otimes2}dxdy\right\vert\leq C\int_{x\neq y}V(x-y)\left(\mu_{N}-\mu\right)^{\otimes2}dxdy+o_{N}(1),
\label{commutator sketch}
\end{align}
where $C>0$ is some harmless constant. 
We will refer to \eqref{commutator sketch} as the commutator estimate, see \cite{rosenzweig2021quantum}.
The representation (\ref{interaction in Fourier}) is inapplicable to the renormalized energy $\mathcal{E}_{N}(t)$ due to the removal of the diagonal. A key observation of \cite{duerinckx2020mean} is that the identity (\ref{interaction in Fourier}) admits a renormalized version. More precisely, denote by $\delta^{(\eta)}_{x}$ the uniform measure of mass $1$ on $\partial B(x,\eta)$. Given $\vec{\eta}=\left(\eta_{1},\ldots,\eta_{N}\right)$ set $\mu_{N}^{(\vec{\eta})}\coloneqq \frac{1}{N}\stackrel[i=1]{N}{\sum}\delta_{x_{i}}^{(\eta_{i})}$. Then, letting $r_{i}\coloneqq \frac{1}{4}\underset{i\neq j}{\min}{\left\vert x_{i}-x_{j}\right\vert}$  and $\eta_{i}\leq r_{i}$,  Serfaty proves the identity 
\begin{align}
\mathcal{E}_{N}(\mu,\mu_{N})= \frac{1}{c_d}\left(\int_{\mathbb{R}^{d}\times\mathbb{R}^{d}}\left\vert\nabla V\star (\mu^{(\eta)}_{N}-\mu)\right\vert^{2}(x)dx-\frac{c_{d}}{N}\stackrel[i=1]{N}{\sum}V(\eta_{i})\right)+\mathrm{negligible\ terms}
. \label{Representation}
\end{align}

\textbf{The weak strong stability principle with source term}. Coming back to the more general scenario  at stake, where a source term $h[\mu]$ is included, it can be shown by direct calculation 
that in this case $\mathcal{E}(t)$ verifies
\begin{align}
\frac{d}{dt}\mathcal{E}(t)\leq&-\int_{\mathbb{R}^{d}\times\mathbb{R}^{d}}\left(\mathbb{J}\nabla V\star\mu(t,x)-\mathbb{J}\nabla V\star\mu(t,y)\right)\nabla V(x-y)\left(\mu_{1}(t,\cdot)-\mu_{2}(t,\cdot)\right)^{\otimes2}dxdy \label{stability sketch with h} \\
&+2\int_{\mathbb{R}^{d}\times\mathbb{R}^{d}}V(x-y)\left(h\left[\mu_{1}\right](t,x)-h\left[\mu_{2}\right](t,x)\right)\left(\mu_{1}(t,y)-\mu_{2}(t,y)\right)dxdy\coloneqq \mathcal{D}^{1}(t)+ \mathcal{D}^{2}(t).     \notag
\end{align}
The first term $\mathcal{D}^{1}$ in the right hand side of \eqref{stability sketch with h} can be handled precisely as before.  Thus, the inclusion of the source term is manifested through the second term $\mathcal{D}^{2}$. In this context, our key observation is that $h$ is a Lipschitz operator on $\dot{H}^{-1}(\mathbb{R}^{d})$, namely we will show  (see Corollary \ref{h Lip coro}) an inequality of the form 
\begin{equation}
\left\Vert h[\mu_{1}]-h[\mu_{2}]\right\Vert _{\dot H^{-1}(\mathbb{R}^d)}\leq C\left\Vert \mu_{1}-\mu_{2}\right\Vert _{\dot H^{-1}(\mathbb{R}^d)}. \label{h lip pre}   
\end{equation}
Equipped with Inequality (\ref{h lip pre}) a bound on $\mathcal{D}^{2}(t)$ by means of $\mathcal{E}(t)$ can be obtained. Indeed,  appealing to Fourier we see that  
\begin{align*}
\mathcal{D}^{2}(t)&=2\int_{\mathbb{R}^{d}}V\star\left(h\left[\mu_{1}\right]-h\left[\mu_{2}\right]\right)(t,x)\left(\mu_{1}(t,x)-\mu_{2}(t,x)\right)dx\\&=2\int_{\mathbb{R}^{d}}\frac{\widehat{\left(h\left[\mu_{1}\right]-h\left[\mu_{2}\right]\right)}(t,x)}{\left\vert x \right\vert}\frac{\widehat{\left(\mu_{1}-\mu_{2}\right)}(t,x)}{\left\vert x \right\vert}dx\\&\leq 2\left\Vert h[\mu_{1}(t,\cdot)]-h[\mu_{2}(t,\cdot)]\right\Vert _{\dot H^{-1}(\mathbb{R}^d)}\left\Vert \mu_{1}(t,\cdot)-\mu_{2}(t,\cdot)\right\Vert _{\dot H^{-1}(\mathbb{R}^d)},    
\end{align*}
by Cauchy-Schwarz. This combined together with \eqref{h lip pre} shows that 
\begin{align*}
 \mathcal{D}^{2}(t) \lesssim  \left\Vert \mu_{1}(t,\cdot)-\mu_{2}(t,\cdot)\right\Vert _{\dot H^{-1}(\mathbb{R}^d)}^{2}=\mathcal{E}(t).  
\end{align*}
which in turn closes the Gr\"onwall estimate.

\textbf{Renormalization with source term}. We will show by a direct calculation (see Theorem \ref{Time derivative of energy}) the inequality 
\begin{align}
\frac{d}{dt}\mathcal{E}_{N}(t)\leq&-\int_{x\neq y}\left(\mathbb{J}\nabla V\star\mu(t,x)-\mathbb{J}\nabla V\star\mu(t,y)\right)\nabla V(x-y)\left(\mu_{N}(t,\cdot)-\mu(t,\cdot)\right)^{\otimes2}dxdy \notag\\
&+2\int_{x\neq y}V(x-y)\left(h\left[\mu_{N}(t,\cdot)\right](x)-h\left[\mu(t,\cdot)\right](x)\right)\left(\mu_{N}(t,y)-\mu(t,y)\right)dxdy\notag\\\coloneqq& \mathcal{D}_{N}^{1}(t)+ \mathcal{D}_{N}^{2}(t). \label{sketch sec 2}
\end{align}
The first term $\mathcal{D}_{N}^{1}$ in the right hand side of Inequality \eqref{sketch sec 2} can be handled precisely via the commutator estimate \eqref{commutator sketch} in the same manner outlined before. Thus, the main challenge is to renormalize the argument outlined in the previous paragraph in order to control $\mathcal{D}_{N}^{2}(t)$ by means of  $\mathcal{E}_{N}(t)$. A first possibility is to attempt proving an identity in the spirit of  (\ref{Representation}) for 
$$
\int_{x\neq y}V(x-y)(h[\mu_{N}]-h[\mu])(x)(h[\mu_{N}]-h[\mu])(y)dxdy. 
$$
However, it is not clear if the fine cancellations which produce this formula can be extended in this manner, since the operation $\mu \mapsto h[\mu]$ is non linear. We therefore pursue a different path inspired by the work of Bresch-Jabin-Wang  \cite{bresch2019modulated}, which avoids making use of this representation. The merit of this approach is that it does not necessitate any algebraic identities, but only inequalities. With this approach we will prove a functional inequality of the form 
\begin{align}
\int_{x\neq y}\!\!\!\!\!\!V(x-y)\left(\mu_{N}-\mu\right)(x)\left(h\left[\mu_{N}\right]-h\left[\mu\right]\right)(y)dxdy\leq 
C\int_{x\neq y}\!\!\!\!\!\!V(x-y)\left(\mu_{N}-\mu\right)^{\otimes2}dxdy+o_{N}(1),
\label{functional ine sec 2}
\end{align}
for some constant $C>0$. 

\section{The well posedness of the mean field equation }
\subsection{Existence }
We start by recalling existence and uniqueness for the limit PDE as
established in \cite{duteil2021mean}.  
\begin{prop}
\label{Cauchy sequence} \textup{(\cite[Theorem 2]{duteil2021mean})} Assume that $\nabla V$ is Lipschitz, let $S\in W^{1,\infty}(\mathbb{R}^{d})$ be odd and let the initial data $\mu_{0}$ satisfy  (\textbf{H2}). Then, there exist a unique solution $\mu\in C([0,T];\mathcal{P}(\mathbb{R}^{d}))$
of \eqref{mean field equation sec2} in the sense of Definition \ref{def of weak sol} and Remark \ref{aux2}. Moreover, $\mu(t,\cdot)$ is compactly supported for all $t\in [0,T]$. 
\end{prop}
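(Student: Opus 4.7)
The plan is to run a Lagrangian fixed-point argument in the spirit of Dobrushin, adapted to weights that evolve along characteristics. Given a candidate curve $\nu\in C([0,T];\mathcal{M}_+(\mathbb{R}^d))$, define the Lipschitz velocity field $V_\nu(t,x):=-\mathbb{J}\nabla V\star\nu(t,x)$ and the bounded growth rate $g_\nu(t,x):=\mathbf{S}[\nu](t,x)$, and solve the coupled characteristic system
\begin{equation*}
\dot X(t;x_0)=V_\nu(t,X(t;x_0)),\qquad \dot M(t;x_0)=g_\nu(t,X(t;x_0))\,M(t;x_0),
\end{equation*}
with $X(0;x_0)=x_0$ and $M(0;x_0)=1$. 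Setting $\mathcal{T}(\nu)(t,\cdot):=X(t;\cdot)_{\#}\bigl(M(t;\cdot)\mu_0\bigr)$, a direct weak-form computation shows $\mathcal{T}(\nu)$ is the unique weak solution of the linear inhomogeneous equation $\partial_t\rho-\mathrm{div}(\rho\,\mathbb{J}\nabla V\star\nu)=g_\nu\rho$ with initial datum $\mu_0$; the desired solution of \eqref{mean field equation sec2} is a fixed point of $\mathcal{T}$.

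\medskip

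\noindent Next I would check that $\mathcal{T}$ preserves a suitable complete metric space. The bound $\|g_\nu\|_\infty\leq\|S\|_\infty$ yields $M(t;\cdot)\in[e^{-\|S\|_\infty T},e^{\|S\|_\infty T}]$, controlling total mass of $\mathcal{T}(\nu)(t,\cdot)$; a Gr\"onwall argument on $|X(t;x_0)|$, exploiting $|\nabla V(z)|\leq |\nabla V(0)|+\mathrm{Lip}(\nabla V)|z|$, confines the characteristics to a ball $B(0,R_T)$ depending only on $R$, $T$, $\|\mathbb{J}\|$ and $\mathrm{Lip}(\nabla V)$. On the set $\mathcal{X}_T$ of curves of positive measures with these uniform support and mass bounds, equipped with the generalized Wasserstein distance of \cite{piccoli2018measure}, $\mathcal{T}$ is a self-map. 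Contraction for $T$ small then follows from the Kantorovich duality bounds
\begin{equation*}
\|V_{\nu_1}-V_{\nu_2}\|_\infty\leq\mathrm{Lip}(\nabla V)\,W_1(\nu_1,\nu_2),\qquad\|g_{\nu_1}-g_{\nu_2}\|_\infty\leq\mathrm{Lip}(S)\,W_1(\nu_1,\nu_2),
\end{equation*}
combined with a Gr\"onwall applied to the coupled system satisfied by the differences of characteristics and of weights.

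\medskip

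\noindent The resulting local fixed point $\mu$ is a probability measure, by the oddness of $S$: $\frac{d}{dt}\int d\mu=\iint S(x-y)d\mu(x)d\mu(y)=0$, so $\mu(t,\mathbb{R}^d)\equiv 1$. Since $R_T$ and the mass bounds depend continuously on the data, the local solution extends to arbitrary $T>0$ by iteration, and compactness of support for $\mu(t,\cdot)$ is inherited from the a priori confinement of the characteristics to $B(0,R_T)$. Uniqueness follows from a parallel stability argument: for two solutions $\mu^{1},\mu^{2}$ sharing initial data, coupling their characteristic systems together with Gr\"onwall on the weight ODE yields $W_1(\mu^{1}(t),\mu^{2}(t))\leq CT\,e^{CT}\sup_{s\leq t}W_1(\mu^{1}(s),\mu^{2}(s))$, forcing $\mu^{1}=\mu^{2}$ on a small interval and then on $[0,T]$ by bootstrap.

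\medskip

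\noindent The main technical obstacle, compared to the weightless Dobrushin scheme, is that $\mathcal{T}(\nu)$ has total mass $\int M(t;x_0)d\mu_0(x_0)$, which is controlled but not equal to $1$ unless $\nu$ is already a fixed point. Hence the iteration cannot be performed directly on $\mathcal{P}(\mathbb{R}^d)$ with the usual $W_1$; one must work with positive measures of variable mass and use a generalized Wasserstein distance, recovering the probability normalization only a posteriori through the oddness of $S$. This is the macroscopic counterpart of the propagation-of-separation obstacle that the same oddness assumption resolves at the discrete level in \eqref{eq:opinion dynamics sec 2}.
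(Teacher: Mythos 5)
The paper does not prove Proposition~\ref{Cauchy sequence} in-house: it imports it verbatim from \cite[Theorem 2]{duteil2021mean}, and as the introduction notes, that reference proves well-posedness via exactly the Dobrushin-type characteristics/fixed-point scheme you sketch (weighted push-forward $X(t;\cdot)_{\#}(M(t;\cdot)\mu_0)$, self-map on a class of positive measures with uniform support and mass bounds, contraction for small $T$, normalization a posteriori by oddness of $S$). Your proposal is a correct outline of that argument; the one small imprecision is that the generalized Wasserstein distance for \emph{positive} measures of variable mass is the one of \cite{piccoli2014generalized}, while \cite{piccoli2018measure} is its later extension to \emph{signed} measures.
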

\begin{rem} \label{classical solutions}
If one further assumes $V\in \mathcal{S}(\mathbb{R}^{d})$, $S \in \mathcal{S}(\mathbb{R}^{d})$ and $\mu_{0}\in C_{0}^{\infty}(\mathbb{R}^{d})$ in Proposition \ref{Cauchy sequence}, then it is possible to show that the solution $\mu$ provided by  Proposition \ref{Cauchy sequence} is a smooth compactly supported function. Consequently, subject to these assumption the solution is classical.     
\end{rem}
The main theorem of this section concerns the existence of a solution to \eqref{mean field equation sec2} and is based on a stability argument using suitably mollified problems. An important component of the proof is the propagation of Sobolev norms. Sobolev regularity is important
in order to be able to use the commutator estimate \eqref{commutator sketch} against Lipschitz vector fields. In this context, we will need the following near-boundedness of the Calderon-Zygmund operator on $L^{\infty}$.   
\begin{prop} 
\label{near boundedness}  
\textup{(\cite[Proposition 7.7]{bahouri2011fourier})}
Let $s>0$  and let $a\in[1,\infty)$ and $b\in[1,\infty]$. Then, there is some $C>0$ such that 
\begin{align*}
\left\Vert \nabla^2 V\star \mu \right\Vert_{\infty}\leq 
C\left(\mathrm{min} \left(\left\Vert\nabla V\star \mu\right\Vert_{b},\left\Vert \mu\right\Vert_{a}\right)+\left\Vert \mu\right\Vert_{\infty}(1+\left|\log\left\Vert \mu\right\Vert_{\infty}\right|\right)\log\left(\max\left(e,\left\Vert \mu\right\Vert_{\infty}\right)+\left\Vert \mu \right\Vert_{C^{0,s}}\right).
\end{align*} 
\end{prop}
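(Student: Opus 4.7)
The plan is to exploit a Littlewood--Paley decomposition $\mu=\sum_{j\geq -1}\Delta_j\mu$ and estimate each dyadic block separately via the triangle inequality
$$
\|\nabla^2 V\star\mu\|_\infty\leq\sum_{j\geq -1}\|\Delta_j(\nabla^2 V\star\mu)\|_\infty,
$$
splitting the sum into three frequency regimes (low, medium, high) and optimizing the thresholds. The underlying observation is that $\nabla^2V$ is a homogeneous Fourier multiplier of degree zero, so on each dyadic block the operator $\Delta_j\circ(\nabla^2V\star\cdot)$ has a smooth symbol supported in $\{|\xi|\sim 2^j\}$; by scaling, its convolution kernel has $L^1$-norm bounded uniformly in $j$. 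This is the analytic workhorse.

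For low frequencies ($j<0$) I would combine this uniform $L^1$-kernel bound with Bernstein's inequality. On one hand, Young plus Bernstein yield
$$
\|\Delta_j(\nabla^2 V\star\mu)\|_\infty\leq C\,2^{jd/a}\|\mu\|_a,
$$
and on the other hand, writing $\nabla^2 V\star\mu=\nabla(\nabla V\star\mu)$ and using Bernstein for the gradient gives
$$
\|\Delta_j(\nabla^2 V\star\mu)\|_\infty\leq C\,2^{j(1+d/b)}\|\nabla V\star\mu\|_b.
$$
In either case the geometric series in $j<0$ sums to a bound by $C\min(\|\mu\|_a,\|\nabla V\star\mu\|_b)$, which produces the $\min$ term in the statement.

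For middle frequencies $0\leq j\leq N$ the multiplier estimate gives the uniform block bound $\|\Delta_j(\nabla^2 V\star\mu)\|_\infty\leq C\|\Delta_j\mu\|_\infty\leq C\|\mu\|_\infty$, so their total contribution is at most $CN\|\mu\|_\infty$. For high frequencies $j>N$ the H\"older regularity yields $\|\Delta_j\mu\|_\infty\leq C\,2^{-js}\|\mu\|_{C^{0,s}}$, contributing $C\,2^{-Ns}\|\mu\|_{C^{0,s}}$. The threshold $N$ is then chosen so that $2^{Ns}\sim\max(e,\|\mu\|_\infty)+\|\mu\|_{C^{0,s}}$, at which point the high-frequency contribution is absorbed into $\|\mu\|_\infty$ and the middle-frequency contribution matches the factor $\log(\max(e,\|\mu\|_\infty)+\|\mu\|_{C^{0,s}})$ in the conclusion.

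The main obstacle will be the precise bookkeeping needed to recover the extra $(1+|\log\|\mu\|_\infty|)$ factor in the stated inequality. This factor is not produced by a one-parameter optimization; it arises from treating separately the regimes $\|\mu\|_\infty\geq 1$ and $\|\mu\|_\infty<1$, where in the latter case one pays an additional logarithmic price in $\|\mu\|_\infty$ in order to relate the partial sums of dyadic blocks to $\|\mu\|_\infty$ over the full range of relevant scales, and the two thresholds (between low and middle, and between middle and high) must be chosen separately. The multiplier and Bernstein inputs are standard; everything subtle sits in this two-scale optimization and the careful tracking of constants depending only on $s$, $a$, and $b$.
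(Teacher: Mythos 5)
The paper does not prove this statement; it simply imports it verbatim from Bahouri--Chemin--Danchin (Proposition 7.7 there), so there is no in-paper argument to compare against. Your Littlewood--Paley reconstruction is the standard proof of exactly this kind of logarithmic Calder\'on--Zygmund estimate and is essentially the argument used in that reference: the uniform $L^1$ bound for the dyadic blocks of a degree-zero multiplier, Bernstein at low frequencies to obtain the $\min(\|\nabla V\star\mu\|_b,\|\mu\|_a)$ term (note this is exactly where $a<\infty$ is used, to sum the geometric series $\sum_{j<0}2^{jd/a}$), the flat $L^\infty$ bound in the middle range, and the H\"older decay $2^{-js}$ at high frequencies, with the cutoff $N$ tuned so that the high tail is absorbed and the middle count produces the logarithm. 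The only points worth tightening in a careful write-up are (1) the low-frequency block $j=-1$ is best handled with the homogeneous decomposition $\dot\Delta_j$, $j\to-\infty$, since the inhomogeneous low-pass block combined with the $0$-homogeneous singular symbol does not have an $L^1$ kernel; and (2) the threshold should really be taken so that $2^{Ns}\sim\|\mu\|_{C^{0,s}}/\|\mu\|_\infty$ rather than $2^{Ns}\sim\max(e,\|\mu\|_\infty)+\|\mu\|_{C^{0,s}}$, after which both the factor $\log\big(\max(e,\|\mu\|_\infty)+\|\mu\|_{C^{0,s}}\big)$ and the extra $\big(1+|\log\|\mu\|_\infty|\big)$ fall out of the single optimization via $a+b\le(1+a)(1+b)$, without a genuine two-parameter or separate case analysis. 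You correctly anticipated this bookkeeping issue, so the proposal is sound.
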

\begin{thm}
Assume that $S$ and $V$ are as in (\textbf{H1}) and let the initial data $\mu_{0}$ satisfy (\textbf{H2}).   Then, there exist a solution in the sense of Definition \ref{def of weak sol} to \eqref{mean field equation sec2}
with initial data $\mu_{0}$. Moreover, this solution satisfies $\mu \in C\left([0,T];L^{p}(\mathbb{R}^{d})\right)$ for all $1\leq p \leq \infty$ and $\mu \in L^{\infty}\left([0,T];W^{1,p}(\mathbb{R}^{d})\right)$ for all $1\leq p< \infty$. 
Furthermore, $\mu(t,\cdot)$ is compactly supported and its support satisfies 
\[
\mathrm{supp}(\mu(t,\cdot))\subset B(0,\overline{R})\,, \mbox{ for all } t\in[0,T],
\]
where $\overline{R}=\overline{R}(\left\Vert \mu_{0}\right\Vert _{L^\infty },R,T,\left\Vert S\right\Vert _{L^\infty})$. 
\label{thm final existence}
\end{thm}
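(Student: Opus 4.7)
The plan is a classical mollification-compactness scheme. I would regularize $V$ into $V_{\varepsilon}=V\star\rho_{\varepsilon}$ (with $\rho_{\varepsilon}$ a symmetric mollifier), so that $\nabla V_{\varepsilon}$ is smooth and Lipschitz, mollify also the initial data $\mu_{0}$ into $\mu_{0,\varepsilon}\in C_{c}^{\infty}(\mathbb{R}^{d})$, and invoke Proposition \ref{Cauchy sequence} with Remark \ref{classical solutions} to obtain smooth compactly supported classical solutions $\mu_{\varepsilon}\in C^{1}([0,T];C_{c}^{\infty}(\mathbb{R}^{d}))$ of the mollified problem. The heart of the proof is to extract $\varepsilon$-uniform bounds on the support and on $\|\mu_\varepsilon\|_{L^p}$, $\|\mu_\varepsilon\|_{W^{1,p}}$, and then to pass to the limit in the weak formulation of Definition \ref{def of weak sol}.

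\textbf{Support and $L^{p}$ bounds.} Because $\mathbb{J}$ is either antisymmetric or the identity, $\mathrm{div}(\mathbb{J}\nabla V_{\varepsilon}\star\mu_{\varepsilon})$ is either $0$ or $-\rho_{\varepsilon}\star\mu_{\varepsilon}\leq 0$. Rewriting the mollified PDE as $\partial_{t}\mu_{\varepsilon}-\mathbf{A}_\varepsilon[\mu_\varepsilon]\cdot\nabla\mu_{\varepsilon}=\mu_{\varepsilon}\,\mathrm{div}\,\mathbf{A}_\varepsilon[\mu_\varepsilon]+h[\mu_{\varepsilon}]$ and integrating along the characteristics of $-\mathbf{A}_\varepsilon[\mu_\varepsilon]$, the bound $|h[\mu_\varepsilon]|\leq\|S\|_\infty \mu_\varepsilon$ yields
\[
\|\mu_{\varepsilon}(t,\cdot)\|_{L^{\infty}}\leq e^{\|S\|_{\infty}t}\|\mu_{0}\|_{L^{\infty}},
\]
and a parallel estimate for $\|\mu_{\varepsilon}(t,\cdot)\|_{L^{1}}$. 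Splitting $\nabla V=\nabla V\,\chi_{B_{1}}+\nabla V\,\chi_{B_{1}^{c}}$, with the first piece in $L^{1}$ and the second in $L^{\infty}$, gives a uniform bound $\|\mathbf{A}_{\varepsilon}[\mu_\varepsilon](t,\cdot)\|_{\infty}\leq C(\|\mu_{\varepsilon}\|_{L^{\infty}}+\|\mu_{\varepsilon}\|_{L^{1}})$, so the characteristics spread at a controlled rate and the support stays in $B(0,\overline R)$ with $\overline R$ depending only on $R$, $T$, $\|\mu_{0}\|_{L^{\infty}}$ and $\|S\|_{\infty}$. Interpolation between $L^{1}$ and $L^{\infty}$ then yields the uniform $L^{p}$ bound for every $p\in[1,\infty]$.

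\textbf{$W^{1,p}$ bound.} Differentiating the mollified equation and testing against $|\nabla\mu_{\varepsilon}|^{p-2}\nabla\mu_{\varepsilon}$, the only term that is not routinely controlled by previous bounds and $\|S\|_{W^{1,\infty}}$ is the one coming from $(\nabla\mathbf{A}_{\varepsilon}[\mu_\varepsilon])\cdot\nabla\mu_{\varepsilon}=(\nabla^{2}V_{\varepsilon}\star\mu_{\varepsilon})\cdot\nabla\mu_{\varepsilon}$, which contributes $\|\nabla^{2}V_{\varepsilon}\star\mu_{\varepsilon}\|_{\infty}\|\nabla\mu_{\varepsilon}\|_{L^{p}}^{p}$. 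Here I would apply the near-boundedness Proposition \ref{near boundedness}, choosing $p$ large enough so that $W^{1,p}\hookrightarrow C^{0,s}$ and absorbing the Hölder norm into the Sobolev one; the uniform $L^{\infty}$ bound of the previous step plugs directly into the prefactor. This produces an Osgood-type differential inequality of the form
\[
\frac{d}{dt}\log\bigl(e+\|\mu_{\varepsilon}(t,\cdot)\|_{W^{1,p}}\bigr)\leq C\log\bigl(e+\|\mu_{\varepsilon}(t,\cdot)\|_{W^{1,p}}\bigr),
\]
which integrates to a (double-exponential, but finite) $\varepsilon$-uniform bound on $[0,T]$.

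\textbf{Compactness, passage to the limit, and the main obstacle.} Reading $\partial_{t}\mu_{\varepsilon}$ off the equation gives a uniform bound in $L^{\infty}(0,T;W^{-1,q})$ for some $q$, so Aubin--Lions, together with the compact support inclusion $\mathrm{supp}\,\mu_{\varepsilon}\subset B(0,\overline R)$, yields a subsequence converging strongly to $\mu$ in $C([0,T];L^{p}(\mathbb{R}^{d}))$ for every $1\leq p<\infty$, weak-$\ast$ in $L^{\infty}$, and weak-$\ast$ in $L^{\infty}(0,T;W^{1,p})$. The short/long range splitting of $\nabla V$ combined with this strong $L^{p}$ convergence is enough to pass to the limit in $\mathrm{div}(\mu_{\varepsilon}\mathbf{A}_{\varepsilon}[\mu_{\varepsilon}])$ and in $h[\mu_{\varepsilon}]$ against smooth test functions, producing a weak solution in the sense of Definition \ref{def of weak sol} with the stated regularity and support; continuity at $p=\infty$ is obtained a posteriori from the uniform $W^{1,p}$ bound and Sobolev embedding. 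The main obstacle is the $W^{1,p}$ estimate: since $\nabla^{2}V$ is a Calderón--Zygmund kernel and hence unbounded on $L^{\infty}$, the naive Grönwall argument breaks and one genuinely needs Proposition \ref{near boundedness}, together with a careful choice of $p$ large enough to close the log-logarithmic loop.
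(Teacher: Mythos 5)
Your proposal is correct and yields the same result, but a few of the technical devices differ from the paper's. The overall skeleton coincides: mollify $V$ and $\mu_0$, invoke Proposition~\ref{Cauchy sequence} and Remark~\ref{classical solutions} for a smooth approximate solution $\mu_\varepsilon$, propagate $L^p$, support, and $W^{1,p}$ bounds uniformly in $\varepsilon$, extract a limit, and pass it through the weak formulation. The central and hardest step --- differentiating the equation, testing against $|\nabla\mu_\varepsilon|^{p-2}\nabla\mu_\varepsilon$, controlling $\nabla^2 V_\varepsilon\star\mu_\varepsilon$ by the log-Lipschitz bound of Proposition~\ref{near boundedness}, absorbing the $C^{0,s}$ norm via Morrey with $s=1-d/p$, and closing via a $\log\log$ Gr\"onwall inequality --- is identical in both proofs, and you correctly identify this as the crux.

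Where you diverge from the paper is in the elementary steps. For the $L^\infty$ and support bounds you argue along Lagrangian characteristics of $-\mathbf{A}_\varepsilon[\mu_\varepsilon]$, using $\mathrm{div}\,\mathbf{A}_\varepsilon\leq0$ and $|h[\mu_\varepsilon]|\leq\|S\|_\infty\mu_\varepsilon$ to get $\|\mu_\varepsilon(t,\cdot)\|_\infty\leq e^{\|S\|_\infty t}\|\mu_0\|_\infty$, and a uniform $\|\mathbf{A}_\varepsilon[\mu_\varepsilon]\|_\infty$ bound (from $L^1$+$L^\infty$ splitting of $\nabla V$) to confine characteristics to a ball of radius $\overline R = R+\varepsilon+CT$. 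The paper instead proceeds Eulerian: it differentiates $\|\mu_\varepsilon\|_{L^p}^p$ directly for $p<\infty$, letting $p\to\infty$ to get the sup bound, and controls the support by testing the PDE against $\varphi_\eta=(\eta+\mu_\varepsilon)^{-1}$, sending $\eta\to0$, and running Gr\"onwall on the resulting inequality. Your Lagrangian argument is more transparent and gives the containment $\mathrm{supp}(\mu_\varepsilon(t,\cdot))\subset B(0,\overline R)$ immediately, which in the paper's version requires a bit of unpacking of their $\varphi_\eta$ test-function argument. For compactness you invoke Aubin--Lions--Simon with the triple $W^{1,p}\hookrightarrow\hookrightarrow L^p\hookrightarrow W^{-1,q}$ on the fixed ball $B(0,\overline R)$, while the paper derives $\|\mu_\varepsilon(t,\cdot)-\mu_\varepsilon(s,\cdot)\|_{L^p}\leq C|t-s|$ directly from the PDE and applies Arzel\`a--Ascoli in $C([0,T];L^p)$; both deliver the same conclusion. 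Your a~posteriori argument for $C([0,T];L^\infty)$ (interpolating $L^p$-continuity against the uniform $W^{1,p}$ bound with $p>d$) is a clean way to close the $p=\infty$ case, which the paper treats somewhat implicitly. One small imprecision: passing to the limit in $h[\mu_\varepsilon]=\mu_\varepsilon\,S\star\mu_\varepsilon$ needs no short/long-range splitting since $S$ is Schwartz; that splitting is only needed for $\mathbf{A}_\varepsilon[\mu_\varepsilon]$, as the paper's Step 4 shows.
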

\textit{Proof}. \textbf{Step 1. Propagation of $L^p$ norms}.  Let $\chi_{\varepsilon}$ be a standard mollifier and let $V_{\varepsilon} \coloneqq \chi_{\varepsilon}\star V $ and $\mathbf{A}_{\varepsilon}[\mu]\coloneqq \mathbb{J}\nabla V_{\varepsilon} \star \mu$.
Let $\mu_{\varepsilon}$ be the solution
of the mollified equation 

\begin{equation}
\partial_{t}\mu_{\varepsilon}-\mathrm{div}\left(\mu_{\varepsilon}\mathbf{A}_{\varepsilon}[\mu_{\varepsilon}]\right)=h\left[\mu_{\varepsilon}\right],\ \mu_{\varepsilon}(0,x)=\chi_{\varepsilon}\star\mu_{0}.\label{mollifiedeq}
\end{equation}
This solution is classical and is ensured  via Proposition \ref{Cauchy sequence} and Remark \ref{classical solutions}. We wish to quantify the growth, size of support and regularity of this solution. We compute the time derivative
of $\left\Vert \mu_{\varepsilon}(t,\cdot)\right\Vert _{p}^{p}$ . 
\begin{align*}
\frac{d}{dt}\left\Vert \mu_{\varepsilon}(t,\cdot)\right\Vert _{p}^{p}&=p\int_{\mathbb{R}^{d}}\mu_{\varepsilon}^{p-1}(t,x)\partial_{t}\mu_{\varepsilon}(t,x)dx\\
&=p\int_{\mathbb{R}^{d}}\mu_{\varepsilon}^{p-1}(t,x)\mathrm{div}\left(\mu_{\varepsilon}(t,x)\mathbf{A}_{\varepsilon}[\mu_{\varepsilon}](t,x)\right)dx+p\int_{\mathbb{R}^{d}}\mu_{\varepsilon}^{p-1}(t,x)h\left[\mu_{\varepsilon}\right](t,x)dx\coloneqq J_{1}+J_{2}.    
\end{align*}
To bound $J_{1}$, note that integration by parts yields 
\begin{align}
J_{1}&=p\int_{\mathbb{R}^{d}}\mu_{\varepsilon}^{p-1}(t,x)\nabla\mu_{\varepsilon}(t,x)\mathbf{A}_{\varepsilon}[\mu_{\varepsilon}](t,x)dx+ p\int_{\mathbb{R}^{d}}\mu_{\varepsilon}^{p}(t,x)\mathrm{div}\left(\mathbf{A}_{\varepsilon}[\mu_{\varepsilon}](t,x)\right)dx \notag\\
&=
(p-1)\int_{\mathbb{R}^{d}}\mu_{\varepsilon}^{p}(t,x)\mathrm{div}\left(\mathbb{J}\nabla V_{\varepsilon}\star\mu_{\varepsilon}(t,x)\right)dx.\label{eqJ1}
\end{align}
Note that $\mathrm{div}(\mathbb{J}\nabla V_{\varepsilon}\star\mu_{\varepsilon})=0$ if $\mathbb{J}$
is antisymmetric and  $\mathrm{div}(\mathbb{J}\nabla V_{\varepsilon}\star\mu_{\varepsilon})=-\chi_{\varepsilon}\star \mu_{\varepsilon}\leq0$ if $\mathbb{J}$ is the identity.  Thus, \eqref{eqJ1} proves that $J_{1}\leq0$. Furthermore we have 
\begin{equation*}
\left\vert J_{2}\right\vert \leq p\int_{\mathbb{R}^{d}}\mu_{\varepsilon}^{p}(t,x)\left\vert \mathbf{S}[\mu_{\varepsilon}]\right\vert(t,x)dx\leq p\left\Vert \mathbf{S}[\mu_{\varepsilon}](t,\cdot)\right\Vert _{\infty}\left\Vert \mu_{\varepsilon}(t,\cdot)\right\Vert _{p}^{p}\leq p\left\Vert S\right\Vert _{\infty}\left\Vert \mu_{\varepsilon}(t,\cdot)\right\Vert _{p}^{p}.
\end{equation*}
Therefore, we conclude
that 
\[
\frac{d}{dt}\left\Vert \mu_{\varepsilon}(t,\cdot)\right\Vert _{p}^{p}\leq p\left\Vert S\right\Vert _{\infty}\left\Vert \mu_{\varepsilon}(t,\cdot)\right\Vert _{p}^{p},
\]
which entails 
\[
\left\Vert \mu_{\varepsilon}(t,\cdot)\right\Vert _{p}^{p}\leq e^{p\left\Vert S\right\Vert _{\infty}t}\left\Vert \mu_{\varepsilon}(0,\cdot)\right\Vert _{p}^{p}=e^{p\left\Vert S\right\Vert _{\infty}t}\left\Vert \chi_{\varepsilon}\star\mu_{0}\right\Vert _{p}^{p}\leq e^{p\left\Vert S\right\Vert _{\infty}t}\left\Vert \mu_{0}\right\Vert _{p}^{p},
\]
or 

\[
\left\Vert \mu_{\varepsilon}(t,\cdot)\right\Vert _{p}\leq e^{\left\Vert S\right\Vert _{\infty}t}\left\Vert \mu_{0}\right\Vert _{p},\ 1\leq p<\infty.
\]
Since according to Proposition \ref{Cauchy sequence} $\mu_{\varepsilon}(t,\cdot)$ is compactly
supported we can pass to the limit as $p\rightarrow\infty$ in the
last inequality in order to deduce 
\begin{align}
\left\Vert \mu_{\varepsilon}(t,\cdot)\right\Vert _{p}\leq e^{\left\Vert S\right\Vert _{\infty}t}\left\Vert \mu_{0}\right\Vert _{p}\ 1\leq p\leq\infty. \label{propagation of L^p eq}   
\end{align}

\textbf{Step 2. Propagation of support}. Denote by $R_{\varepsilon}(t)$ the size of the support of $\mu_{\varepsilon}(t,\cdot)$, i.e. $R_{\varepsilon}(t)\coloneqq \left\vert \mathrm{supp}(\mu_{\varepsilon}(t,\cdot))\right\vert$. For each $\eta>0$ consider the function 
\begin{align*}
\varphi_{\eta}(t,x)\coloneqq \frac{1}{\eta+\mu_{\varepsilon}(t,x)}.     
\end{align*}
Multiplying \eqref{mollifiedeq} by $\varphi_{\eta}$ and integrating on $[0,t]\times \mathbb{R}^{d}$ we arrive at  the equation 
\begin{align}
\int_{\mathbb{R}^{d}}\mu_{\varepsilon}(t,x)\varphi_{\eta}(t,x)=&\int_{\mathbb{R}^{d}}\mu_{\varepsilon}(0,x)\varphi_{\eta}(0,x)+\int_{0}^{t}\int_{\mathbb{R}^{d}}\partial_{\tau}\varphi_{\eta}(\tau,x)\mu_{\varepsilon}(\tau,x)dxd\tau \notag\\
&+\int_{0}^{t}\int_{\mathbb{R}^{d}}\varphi_{\eta}(\tau,x)\mathrm{div}(\mu_{\varepsilon}(\tau,x)\mathbf{A}_{\varepsilon}[\mu_{\varepsilon}](\tau,x))dxd\tau \notag\\
&+\int_{0}^{t}\int_{\mathbb{R}^{d}}\varphi_{\eta}(\tau,x)h[\mu_{\varepsilon}](\tau,x)dxd\tau.
\label{eq tested}
\end{align}
We proceed by manipulating the inner integrals in the right-hand side of \eqref{eq tested}.
\begin{align}
\int_{\mathbb{R}^{d}}\partial_{\tau}\varphi_{\eta}(\tau,x)\mu_{\varepsilon}(\tau,x)dx=&-\int_{\mathbb{R}^{d}}\frac{\mu_{\varepsilon}(\tau,x)\partial_{\tau}\mu_{\varepsilon}(\tau,x)}{(\eta+\mu_{\varepsilon}(\tau,x))^{2}}dx\notag\\
=&-\int_{\mathbb{R}^{d}}\frac{\mu_{\varepsilon}(\tau,x)\mathrm{div}(\mu_{\varepsilon}(\tau,x)\mathbf{A}_{\varepsilon}[\mu_{\varepsilon}](\tau,x))}{(\eta+\mu_{\varepsilon}(\tau,x))^{2}}dx-\int_{\mathbb{R}^{d}}\frac{\mu_{\varepsilon}(\tau,x)h[\mu_{\varepsilon}](\tau,x)}{(\eta+\mu_{\varepsilon}(\tau,x))^{2}}dx \notag\\
=&-\int_{\mathbb{R}^{d}}\frac{\mu_{\varepsilon}(\tau,x)\nabla\mu_{\varepsilon}(\tau,x)\mathbf{A}_{\varepsilon}[\mu_{\varepsilon}](\tau,x)}{(\eta+\mu_{\varepsilon}(\tau,x))^{2}}dx \notag\\&-\int_{\mathbb{R}^{d}}\frac{\mu_{\varepsilon}^{2}(\tau,x)\mathrm{div}(\mathbf{A}_{\varepsilon}[\mu_{\varepsilon}](\tau,x))}{(\eta+\mu_{\varepsilon}(\tau,x))^{2}}dx
-\int_{\mathbb{R}^{d}}\frac{\mu_{\varepsilon}(\tau,x)h[\mu_{\varepsilon}](\tau,x)}{(\eta+\mu_{\varepsilon}(\tau,x))^{2}}dx. \label{first manipulation}
\end{align}
Integrating by parts the first integral in the right-hand side of \eqref{first manipulation} we get 
\begin{align}
\int_{\mathbb{R}^{d}}\frac{\mu_{\varepsilon}(\tau,x)\nabla\mu_{\varepsilon}(\tau,x)\mathbf{A}_{\varepsilon}[\mu_{\varepsilon}](\tau,x)}{(\eta+\mu_{\varepsilon}(\tau,x))^{2}}dx =&\,-\int_{\mathbb{R}^{d}}\mathbf{A}_{\varepsilon}[\mu_{\varepsilon}](\tau,x)\nabla\left(\frac{1}{\eta+\mu_{\varepsilon}(\tau,x)}\right)\mu_{\varepsilon}(\tau,x)dx\notag \\
=&\,\int_{\mathbb{R}^{d}}\mathrm{div}(\mathbf{A}_{\varepsilon}[\mu_{\varepsilon}])(\tau,x)\frac{\mu_{\varepsilon}(\tau,x)}{\eta+\mu_{\varepsilon}(\tau,x)}dx\notag \\
&\,+\int_{\mathbb{R}^{d}}\mathbf{A}_{\varepsilon}[\mu_{\varepsilon}](\tau,x)\nabla\log\left(\eta+\mu_{\varepsilon}(\tau,x)\right)dx.\label{identity for 1st int}
\end{align}
As for the second integral in \eqref{eq tested}, we observe the identity  
\begin{align}
\int_{\mathbb{R}^{d}}\varphi_{\eta}(\tau,x)\mathrm{div}(\mu_{\varepsilon}(\tau,x)\mathbf{A}_{\varepsilon}[\mu_{\varepsilon}](\tau,x))dx 
= &\,\int_{\mathbb{R}^{d}}\mathbf{A}_{\varepsilon}[\mu_{\varepsilon}](\tau,x)\nabla\log\left(\eta+\mu_{\varepsilon}(\tau,x)\right)dx\notag\\
&\,+\int_{\mathbb{R}^{d}}\mathrm{div}(\mathbf{A}_{\varepsilon}[\mu_{\varepsilon}](\tau,x))\frac{\mu_{\varepsilon}(\tau,x)}{\mu_{\varepsilon}(\tau,x)+\eta}dx. \label{second manipulation}    
\end{align}
The combination of equations \eqref{first manipulation}, \eqref{identity for 1st int} and \eqref{second manipulation} yields 
\begin{align*}
&\int_{\mathbb{R}^{d}}\partial_{\tau}\varphi_{\eta}(\tau,x)\mu_{\varepsilon}(\tau,x)dx+\int_{\mathbb{R}^{d}}\varphi_{\eta}(\tau,x)\mathrm{div}(\mu_{\varepsilon}(\tau,x)\mathbf{A}_{\varepsilon}[\mu_{\varepsilon}](\tau,x))dx+\int_{\mathbb{R}^{d}}\varphi_{\eta}(\tau,x)h[\mu_{\varepsilon}](\tau,x)dx\\
&=-\int_{\mathbb{R}^{d}}\frac{\mu_{\varepsilon}^{2}(\tau,x)\mathrm{div}(\mathbf{A}_{\varepsilon}[\mu_{\varepsilon}](\tau,x))}{(\eta+\mu_{\varepsilon}(\tau,x))^{2}}dx-\int_{\mathbb{R}^{d}}\frac{\mu_{\varepsilon}(\tau,x)h[\mu_{\varepsilon}](\tau,x)}{(\eta+\mu_{\varepsilon}(\tau,x))^{2}}dx+\int_{\mathbb{R}^{d}}\varphi_{\eta}(\tau,x)h[\mu_{\varepsilon}](\tau,x)dx\\
&\coloneqq I_{1}+I_{2}+I_{3}.
\end{align*}
Using that $\mathrm{div}(\mathbf{A}_{\varepsilon}[\mu_{\varepsilon}])=-\chi_{\varepsilon}\star\mu_{\varepsilon}$ if $\mathbb{J}=\mathrm{Id}$ and $\mathrm{div}(\mathbf{A}_{\varepsilon}[\mu_{\varepsilon}])=0$ if $\mathbb{J}$ is antisymmetric we see that 
\begin{align}
\left \vert I_{1} \right\vert \leq\left\Vert \chi_{\varepsilon}\star\mu_{\varepsilon}\right\Vert _{\infty}\int_{\mathbb{R}^{d}}\mu_{\varepsilon}^{2}(\tau,x)\varphi_{\eta}^{2}(\tau,x)dx&\leq\underset{s\in[0,T]}{\sup}\left\Vert \mu_{\varepsilon}(s,\cdot)\right\Vert _{\infty}\int_{\mathbb{R}^{d}}\mu_{\varepsilon}^{2}(\tau,x)\varphi_{\eta}^{2}(\tau,x)dx \notag\\
&\leq e^{T\left\Vert S\right\Vert_{\infty}}\left\Vert \mu_{0}\right\Vert_{\infty}\int_{\mathbb{R}^{d}}\mu_{\varepsilon}^{2}(\tau,x)\varphi_{\eta}^{2}(\tau,x),   \label{ESTIMATE for I1}
\end{align}
where in the last inequality we used \eqref{propagation of L^p eq}. 
Furthermore, we have the inequalities  
\begin{align}
\left\vert I_{2} \right\vert \leq\int_{\mathbb{R}^{d}}\frac{\mu_{\varepsilon}^{2}(\tau,x)\left|S\star\mu_{\varepsilon}\right|(\tau,x)}{(\eta+\mu_{\varepsilon}(\tau,x))^{2}}dx\leq\left\Vert S\right\Vert _{\infty}\int_{\mathbb{R}^{d}}\mu_{\varepsilon}^{2}(\tau,x)\varphi_{\eta}^{2}(\tau,x)dx
\label{ESTIMATE for I2}
\end{align}
and 
\begin{align}
\left\vert I_{3} \right\vert\leq\int_{\mathbb{R}^{d}}\varphi_{\eta}(\tau,x)\mu_{\varepsilon}(\tau,x)\left|S\star\mu_{\varepsilon}\right|(\tau,x)dx\leq \left\Vert S\right\Vert _{\infty}\int_{\mathbb{R}^{d}}\mu_{\varepsilon}(\tau,x)\varphi_{\eta}(\tau,x)dx.
\label{ESTIMATE for I3}
\end{align}
Putting $\overline{M}=\max\left\{ 2\left\Vert S\right\Vert _{\infty},e^{\left\Vert S\right\Vert _{\infty}T}\left\Vert \mu_{0}\right\Vert _{\infty}\right\} $ and substituting \eqref{ESTIMATE for I1}, \eqref{ESTIMATE for I2} and \eqref{ESTIMATE for I3} in \eqref{eq tested} 
we find that 
\begin{align*}
\int_{\mathbb{R}^{d}}\mu_{\varepsilon}(t,x)\varphi_{\eta}(t,x)\leq&\int_{\mathbb{R}^{d}}\mu_{\varepsilon}(0,x)\varphi_{\eta}(0,x)\\
&+2\overline{M}\left(\int_{0}^{t}\int_{\mathbb{R}^{d}}\varphi_{\eta}^{2}(\tau,x)\mu_{\varepsilon}^{2}(\tau,x)dxd\tau+\int_{0}^{t}\int_{\mathbb{R}^{d}}\varphi_{\eta}(\tau,x)\mu_{\varepsilon}(\tau,x)dxd\tau \right).     
\end{align*}
Since $\varphi_{\eta}(t,x)\mu_{\varepsilon}(t,x)\underset{\eta \rightarrow 0}{\rightarrow} \mathbf{1}_{\left\vert \cdot \right\vert\leq R_{\varepsilon}(t)}(x)$, passing to the limit in the last inequality as $\eta \rightarrow 0$ we obtain 
\begin{align*}
R_{\varepsilon}^{d}(t)\leq R_{\varepsilon}^{d}(0)+2\overline{M}\int_{0}^{t}R_{\varepsilon}^{d}(\tau)d\tau.
\end{align*}
Hence, by Gr\"onwall's inequality we deduce that 
$
R_{\varepsilon}(t)\leq R_{\varepsilon}(0)e^{\frac{2\overline{M}t}{d}}\leq (R+\varepsilon)e^{\frac{2\overline{M}T}{d}}.    
$
To conclude, we have proved 
\begin{align}
R_{\varepsilon}(t)\leq \overline{R}, \ t \in [0,T] \label{compact support final est}  \end{align}
for some $\overline{R}=\overline{R}(\left\Vert \mu_{0}\right\Vert _{L^\infty },R,T,\left\Vert S\right\Vert _{L^\infty})$.

\

\textbf{Step 3. Propagation of Sobolev norms}. Expanding the divergence in \eqref{mollifiedeq}, we get
\begin{align*}
\partial_{t}\mu_{\varepsilon}-\nabla\mu_{\varepsilon}\mathbb{J}\nabla V_{\varepsilon}\star\mu_{\varepsilon}-\mu_{\varepsilon}\mathrm{div}(\mathbb{J}\nabla V_{\varepsilon}\star\mu_{\varepsilon})=h\left[\mu_{\varepsilon}\right].    
\end{align*}
Taking its $j$-th derivative, we obtain 
\begin{equation}
\partial_{t}\partial_{x_{j}}\mu_{\varepsilon}-\nabla\partial_{x_{j}}\mu_{\varepsilon}\mathbb{J}\nabla V_{\varepsilon}\star\mu_{\varepsilon}-\partial_{x_{j}}(\mathbb{J}\nabla V_{\varepsilon}\star\mu_{\varepsilon})\nabla\mu_{\varepsilon}-\partial_{x_{j}}\mu_{\varepsilon}\mathrm{div}(\mathbb{J}\nabla V_{\varepsilon}\star\mu_{\varepsilon})-\mu_{\varepsilon}\mathrm{div}(\mathbb{J}\nabla V_{\varepsilon}\star\partial_{x_{j}}\mu_{\varepsilon})= R_{j,{\varepsilon}}(t,x),\label{eq:-40}
\end{equation}
with $R_{j,{\varepsilon}}(t,x)\coloneqq\partial_{x_{j}}h\left[\mu_{\varepsilon}\right]$.
Set $v_{\varepsilon}(t,x)\coloneqq\mathbb{J}\nabla V_{\varepsilon}\star\mu_{\varepsilon}$ and $\omega_{j,\varepsilon}\coloneqq\partial_{x_{j}}\mu_{\varepsilon}$,
so that the equation \eqref{eq:-40} reads 
\begin{align*}
\partial_{t}\omega_{j,{\varepsilon}}-\nabla\omega_{j,{\varepsilon}}v_{\varepsilon}-\underset{k}{\sum}\partial_{x_{j}}v^{k}_{\varepsilon}\omega_{k,\varepsilon}-\omega_{j,\varepsilon}\mathrm{div}(v_{\varepsilon})-\mu_{\varepsilon}\mathrm{div}(\partial_{x_{j}}v_{\varepsilon})=R_{j,{\varepsilon}}(t,x).    
\end{align*}
We compute the time derivative of $\stackrel[j=1]{d}{\sum}\left\Vert \omega_{j,\varepsilon}(t,\cdot)\right\Vert _{p}^{p}$ as follows:  
\begin{align*}
\frac{d}{dt}\underset{j}{\sum}\left\Vert \omega_{j,\varepsilon}(t,\cdot)\right\Vert _{p}^{p}=&p\underset{j}{\sum}\int_{\mathbb{R}^{d}}\left|\omega_{j,\varepsilon}\right|^{p-1}(t,x)\partial_{t}\left|\omega_{j,\varepsilon}\right|(t,x)dx\\
=&p\underset{j}{\sum}\int_{\mathbb{R}^{d}}\left|\omega_{j,\varepsilon}\right|^{p-1}(t,x)\mathrm{sgn}(\omega_{j,\varepsilon}(t,x))\\&\times\left(\nabla\omega_{j,\varepsilon}v_{\varepsilon}+\underset{k}{\sum}(\partial_{x_{j}}v_{\varepsilon}^{k})\omega_{k,\varepsilon}+\omega_{j,\varepsilon}\mathrm{div}(v_{\varepsilon})+\mu_{\varepsilon}\mathrm{div}(\partial_{x_{j}}v_{\varepsilon})\right)(t,x)dx \\
&+p\underset{j}{\sum}\int_{\mathbb{R}^{d}}\left|\omega_{j,\varepsilon}\right|^{p-1}(t,x)\mathrm{sgn}(\omega_{j,\varepsilon}(t,x))R_{j,\varepsilon}(t,x)dx.
\end{align*}
The right-hand side of the last equation is bounded by 
\begin{align*}
&\underset{j}{\sum}\int_{\mathbb{R}^{d}}\left|\omega_{j,\varepsilon}\right|^{p}(t,x)\left|\mathrm{div}(v_\varepsilon)\right|(t,x)dx+p\underset{j,k}{\sum}\int_{\mathbb{R}^{d}}\left|\omega_{j,\varepsilon}\right|^{p-1}(t,x)\left|\partial_{x_{j}}v_\varepsilon^{k}\right|(t,x)\left|\omega_{k,\varepsilon}\right|(t,x)dx\\
&+p\underset{j}{\sum}\int_{\mathbb{R}^{d}}\left|\omega_{j,\varepsilon}\right|^{p}(t,x)\left|\mathrm{div}(v_\varepsilon)\right|(t,x)dx+p\underset{j}{\sum}\int_{\mathbb{R}^{d}}\left|\omega_{j,\varepsilon}\right|^{p-1}(t,x)\left\vert \mu_\varepsilon \partial_{x_{j}}\mathrm{div} (v_\varepsilon)\right\vert (t,x)\\
&+p\underset{j}{\sum}\int_{\mathbb{R}^{d}}\left|\omega_{j,\varepsilon}\right|^{p-1}(t,x)\left|R_{j,\varepsilon}\right|(t,x)dx\\ \leq &(p+1)\underset{j}{\sum}\int_{\mathbb{R}^{d}}\left|\omega_{j,\varepsilon}\right|^{p}\left|\mathrm{div}(v_\varepsilon)\right|(t,x)dx+p\underset{j,k}{\sum}\int_{\mathbb{R}^{d}}\left|\omega_{j,\varepsilon}\right|^{p-1}(t,x)\left|\partial_{x_{j}}v_\varepsilon^{k}\right|(t,x)\left|\omega_{k,\varepsilon}\right|(t,x)dx\\&+pC(T,\left\Vert S \right\Vert_{\infty}, \left\Vert \mu_{0} \right\Vert_{\infty})\underset{j}{\sum}\int_{\mathbb{R}^{d}}\left|\omega_{j,\varepsilon}\right|^{p}(t,x)dx +p\underset{j}{\sum}\int_{\mathbb{R}^{d}}\left|\omega_{j,\varepsilon}\right|^{p-1}(t,x)\left|R_{j,\varepsilon}\right|(t,x)dx.    
\end{align*}
In the last inequality we invoked  \eqref{propagation of L^p eq}, according to which there holds the estimate
\begin{align*}
\left\Vert \mathrm{div}(v_{\varepsilon})(t,\cdot)\right\Vert _{\infty}\leq \left\Vert \mathrm{div}(\mathbb{J}\nabla V\star \mu_{\varepsilon})(t,\cdot)\right\Vert _{\infty} \leq\left\Vert \mu_{\varepsilon}(t,\cdot)\right\Vert _{\infty}\leq C   
\end{align*}
and thus
\begin{equation}
\int_{\mathbb{R}^{d}}\left|\omega_{j,\varepsilon}\right|^{p}(t,x)\left|\mathrm{div}(v_\varepsilon)\right|(t,x)dx\leq C\int_{\mathbb{R}^{d}}\left|\omega_{j,\varepsilon}\right|^{p}(t,x)dx, \label{1st ineq for I}
\end{equation}
where $C=C\left(\left\Vert \mu_{0}\right\Vert _{\infty},\left\Vert S\right\Vert _{\infty},T\right)$.
Invoking Proposition \ref{near boundedness},  \eqref{propagation of L^p eq} and Young's inequality for products yields 
\begin{align*}
p&\int_{\mathbb{R}^{d}}\left|\omega_{j,\varepsilon}\right|^{p-1}(t,x)\left\vert \partial_{x_{j}}v_\varepsilon^{k} \right\vert(t,x)\left\vert \omega_{k,\varepsilon} \right\vert(t,x)dx\\&\leq \left\Vert \partial_{x_{j}}v_\varepsilon^{k}(t,\cdot)\right\Vert_{\infty}p\int_{\mathbb{R}^{d}}\left|\omega_{j,\varepsilon}\right|^{p-1}(t,x)\left|\omega_{k,\varepsilon} \right|(t,x)dx\\
&\leq C\left(d,\left\Vert \mu_{\varepsilon}(t,\cdot)\right\Vert_{\infty}\right)\log\left(\max\left(e,\left\Vert \mu_{\varepsilon}(t,\cdot)\right\Vert_{\infty}\right)+\left\Vert \mu_{\varepsilon}(t,\cdot) \right\Vert_{C^{0,s}}\right)\\
&\quad\times p\left(\frac{p-1}{p}\int_{\mathbb{R}^{d}}\left|\omega_{j,\varepsilon}(t,x)\right|^{p}dx+\frac{1}{p}\int_{\mathbb{R}^{d}}\left|\omega_{k,\varepsilon}(t,x)\right|^{p}dx\right)\\
&\leq C\log\left(C+\left\Vert \mu_{\varepsilon}(t,\cdot) \right\Vert_{C^{0,s}}^{p}\right)\stackrel[j=1]{d}{\sum}\left\Vert \omega_{j,\varepsilon}(t,\cdot)\right\Vert _{p}^{p},
\end{align*}
where $C=C\left(p,\left\Vert S \right\Vert_{\infty},T,d,\left\Vert \mu_{0} \right\Vert_{\infty}\right)>e$. Choosing $s=1-\frac{d}{p}$ we have by Morrey's inequality 
\begin{align*}
\left\Vert \mu_{\varepsilon}(t,\cdot) \right\Vert_{C^{0,s}}^{p}\lesssim \left\Vert \mu_{\varepsilon}(t,\cdot) \right\Vert_{W^{1,p}(\mathbb{R}^d)}^{p}\leq  C\left(T,\left\Vert S \right\Vert_{\infty},\left\Vert \mu_{0}\right\Vert_{\infty},d,p\right)\left(1+\stackrel[j=1]{d}\sum \left\Vert \omega_{j,\varepsilon}
(t,\cdot)\right\Vert_{p}^p\right),    
\end{align*}
hence we have proved 
\begin{equation}
p\int_{\mathbb{R}^{d}}\left|\omega_{j,\varepsilon}\right|^{p-1}(t,x)\left\vert \partial_{x_{j}}v_\varepsilon^{k} \right\vert(t,x)\left\vert \omega_{k,\varepsilon} \right\vert(t,x)dx\leq C\log\left(C+C\stackrel[j=1]{d}\sum \left\Vert \omega_{j,\varepsilon}
(t,\cdot)\right\Vert_{p}^p\right)\stackrel[j=1]{d}\sum \left\Vert \omega_{j,\varepsilon}
(t,\cdot)\right\Vert_{p}^p.    \label{2nd ineq for I} 
\end{equation}
Thanks to H{\"o}lder's inequality, it holds that  
\begin{align}
&p\int_{\mathbb{R}^{d}}\left|\omega_{j,\varepsilon}\right|^{p-1}(t,x)\left|R_{j,\varepsilon}\right|(t,x)dx \notag\\
&=p\int_{\mathbb{R}^{d}}\left|\omega_{j,\varepsilon}\right|^{p-1}(t,x)\left|\omega_{j,\varepsilon}\right|(t,x)\left|\mathbf{S}[\mu_{\varepsilon}]\right|(t,x)dx+p\int_{\mathbb{R}^{d}}\left|\omega_{j,\varepsilon}\right|^{p-1}(t,x)\mu_{\varepsilon}(t,x)\left|\partial_{x_{j}}\mathbf{S}[\mu_{\varepsilon}]\right|(t,x)dx
\notag\\
&\leq p\left\Vert S\right\Vert _{\infty}\int_{\mathbb{R}^{d}}\left|\omega_{j,\varepsilon}\right|^{p}(t,x)dx+p\left(\int_{\mathbb{R}^{d}}\left|\omega_{j,\varepsilon}\right|^{p}(t,x)dx\right)^{\frac{p-1}{p}}\left(\int_{\mathbb{R}^{d}}\left|\mu_{\varepsilon}\partial_{x_{j}}S\star \mu_{\varepsilon}\right|^{p}(t,x)dx\right)^{\frac{1}{p}} \notag\\
&\leq p\left\Vert S\right\Vert _{\infty}\int_{\mathbb{R}^{d}}\left|\omega_{j,\varepsilon}\right|^{p}(t,x)dx+C(p,\left\Vert \mu_{0}\right\Vert_{\infty},T,\left\Vert S\right\Vert_{W^{1,\infty}})\left(\int_{\mathbb{R}^{d}}\left|\omega_{j,\varepsilon}\right|^{p}(t,x)dx\right)^{\frac{p-1}{p}} \notag\\
&\leq C\left( \left\Vert \omega_{j,\varepsilon}(t,\cdot)\right\Vert _{p}^{p}+1\right),\label{eq:-43}
\end{align}
where $C=C\left(T,\left\Vert S\right\Vert _{W^{1,\infty}},\left\Vert \mu_{0}\right\Vert _{\infty},p\right)$. 
Put 
\begin{align*}
    \Omega(t)\coloneqq \sum_{j=1}^{d} \left\Vert \omega_{j,\varepsilon}
(t,\cdot)\right\Vert_{p}^p. 
\end{align*}
Gathering  \eqref{1st ineq for I}, \eqref{2nd ineq for I} 
and \eqref{eq:-43}, we arrive at the inequality 
\begin{align*}
 \frac{d}{dt}\Omega(t)\leq  C\log\left( 1+\Omega(t)\right)\left(1+\Omega(t)\right),  \qquad
\mbox{ or equivalently } \qquad
 \frac{d}{dt} \log\log\left(1+\Omega(t)\right)\leq C, 
 \end{align*}
for a suitable constant $C=C\left(\left\Vert \mu_{0}\right\Vert _{\infty},\left\Vert S\right\Vert _{W^{1,\infty}},T,d,p\right)$.
Solving the above inequality yields 
\begin{align}
\left\Vert \nabla\mu_{\varepsilon}(t,\cdot)\right\Vert_{p}^{p}=\Omega(t)\leq C\left(1+\Omega(0)\right)^{e^{Ct}}\leq C\left(1+\left\Vert \nabla \mu_{0} \right\Vert_{p}^{p}\right)^{e^{Ct}}, \ 1\leq p<\infty.   \label{propagation of Sobolev eq}
\end{align}
\textbf{Step 4. Compactness and extraction of a solution}. We start by showing that $\mu_{\varepsilon}\in C([0,T];L^{p}(\mathbb{R}^{d}))$ uniformly in $\varepsilon$.    By  \eqref{mollifiedeq} we have 
\begin{align*}
\left\Vert \mu_{\varepsilon}(t,\cdot)-\mu_{\varepsilon}(s,\cdot)\right\Vert_{p}\leq \left\vert t-s\right\vert\underset{\tau \in [0,T]}{\sup}\left(\left\Vert \mathrm{div}(\mu_{\varepsilon}(\tau,\cdot)\mathbf{A}_{\varepsilon}[\mu_{\varepsilon}](\tau,\cdot))\right\Vert_{p}+\left\Vert h[\mu_{\varepsilon}](\tau,\cdot) \right\Vert_{p}\right).      
\end{align*}
We expand 
\begin{align*}
\mathrm{div}(\mu_{\varepsilon}(\tau,\cdot)\mathbf{A}_{\varepsilon}[\mu_{\varepsilon}](\tau,\cdot))=\nabla \mu_{\varepsilon}(\tau,\cdot)\mathbf{A}_{\varepsilon}[\mu_{\varepsilon}](\tau,\cdot)+\mu_{\varepsilon}(\tau,\cdot)\mathrm{div}(\mathbf{A}_{\varepsilon}[\mu_{\varepsilon}](\tau,\cdot)).     
\end{align*}
In what follows  $C=C\left(\left\Vert \mu_{0}\right\Vert _{W^{1,\infty}},\left\Vert S\right\Vert _{W^{1,\infty}},T,d,p\right)$ is a constant which may change from line to line. According to  \eqref{propagation of Sobolev eq} there holds the estimate  
\begin{align*}
\left\Vert \nabla \mu_{\varepsilon}(\tau,\cdot)\right\Vert_{p}\leq C      
\end{align*}
while according to inequality \eqref{propagation of L^p eq} we have 
\begin{align}
\left\Vert \mathbf{A}_{\varepsilon}[\mu_{\varepsilon}](\tau,\cdot)\right\Vert_{\infty}\leq \left\Vert \mathbb{J}\nabla V\star \mu_{\varepsilon}(\tau,\cdot)\right\Vert_{\infty}\leq C. \label{est on mollified A}    
\end{align}
Thus, we get the estimate  
\begin{align*}
\left\Vert \nabla \mu_{\varepsilon}(\tau,\cdot)\mathbf{A}_{\varepsilon}[\mu_{\varepsilon}](\tau,\cdot) \right\Vert_{p}\leq \left\Vert \nabla \mu_{\varepsilon}(\tau,\cdot) \right\Vert_{p}\left\Vert \mathbf{A}_{\varepsilon}[\mu_{\varepsilon}](\tau,\cdot)\right\Vert_{\infty}\leq C.       
\end{align*}
From the same consideration  
\begin{align*}
\left\Vert \mu _{\varepsilon}(\tau,\cdot))\right\Vert_{p} \leq C    
\end{align*}
and 
\begin{align*}
\left\Vert \mathrm{div}(\mathbf{A}_{\varepsilon}[\mu_{\varepsilon}](\tau,\cdot))\right\Vert_{\infty}\leq \left\Vert \chi_{\varepsilon}\star \mu_{\varepsilon}(\tau,\cdot)\right\Vert_{\infty}\leq \left\Vert \mu_{\varepsilon}(\tau,\cdot)\right\Vert_{\infty}\leq C.         
\end{align*}
Hence 
\begin{align}
\underset{\tau \in [0,T]}{\sup}\left\Vert \mathrm{div}(\mu_{\varepsilon}(\tau,\cdot)\mathbf{A}_{\varepsilon}[\mu_{\varepsilon}](\tau,\cdot))\right\Vert_{p}\leq C. \label{div bound}   
\end{align}
In addition, notice the bound 
\begin{align}
 \underset{\tau \in [0,T]}{\sup} \left\Vert h[\mu_{\varepsilon}](\tau,\cdot)\right\Vert_{p}\leq \left\Vert S \right\Vert_{\infty }\underset{\tau \in [0,T]}{\sup} \left\Vert \mu_{\varepsilon}(\tau,\cdot)\right\Vert_{p}\leq C.     \label{h[mu] bound}
\end{align}
Therefore, from  \eqref{div bound} and \eqref{h[mu] bound} we obtain 
\begin{align}
\left\Vert \mu_{\varepsilon}(t,\cdot)-\mu_{\varepsilon}(s,\cdot)\right\Vert_{p}\leq C\left\vert t-s\right\vert. \label{compactness}     
\end{align}
By the theorem of Arzela-Ascoli and \eqref{compactness} we may extract a
subsequence $\mu_{\varepsilon_{m}}$ and some 
$$\mu \in C([0,T];L^{p}(\mathbb{R}^{d}))$$
such that 
\begin{align*}
\underset{t \in [0,T]}{\sup}\left\Vert \mu_{\varepsilon_{m}}(t,\cdot)-\mu(t,\cdot)\right\Vert_{p} \underset{m \rightarrow \infty}{\rightarrow}0.    
\end{align*}
By \eqref{compact support final est}, $\mu(t,\cdot)$ has compact support and $\mathrm{supp}(\mu(t,\cdot))\subset \overline{R}$ and by \eqref{propagation of Sobolev eq} and the Banach-Alaoglu theorem $\mu \in L^{\infty}([0,T];W^{1,p}(\mathbb{R}^{d}))$.We proceed by showing that $\mu$ is a weak solution to the equation \eqref{mean field equation sec2}. To achieve this it suffices to check that 
\begin{align}
\underset{t \in [0,T]}{\sup}\left\Vert \mathbf{A}_{\varepsilon_{m}}[\mu_{\varepsilon_{m}}](t,\cdot)\mu_{\varepsilon_{m}}(t,\cdot)- \mathbf{A}[\mu](t,\cdot)\mu(t,\cdot)\right\Vert_{1} \underset{m\rightarrow \infty}{\rightarrow} 0, \ \underset{t\in [0,T]}{\sup}\left\Vert h[\mu_{\varepsilon_{m}}](t,\cdot)-h[\mu](t,\cdot) \right\Vert_{1} \underset{m\rightarrow \infty}{\rightarrow} 0.
\label{limit of A and h}
\end{align}
By the triangle inequality  
\begin{align*}
\left\Vert \mathbf{A}_{\varepsilon_{m}}[\mu_{\varepsilon_{m}}](t,\cdot)\mu_{\varepsilon_{m}}(t,\cdot)- \mathbf{A}[\mu](t,\cdot)\mu(t,\cdot)\right\Vert_{1}\leq& 
\left\Vert \mathbf{A}_{\varepsilon_{m}}[\mu_{\varepsilon_{m}}](t,\cdot)(\mu_{\varepsilon_{m}}(t,\cdot)-\mu(t,\cdot))\right\Vert_{1}\\
&+\left\Vert \mu(t,\cdot)(\mathbf{A}_{\varepsilon_{m}}[\mu_{\varepsilon_{m}}](t,\cdot)-\mathbf{A}[\mu](t,\cdot))\right\Vert_{1}\coloneqq A_{1}+A_{2}.  
\end{align*}
By \eqref{est on mollified A} we see that 
\begin{align}
A_{1}\leq C\left\Vert \mu_{\varepsilon_{m}}(t,\cdot)-\mu(t,\cdot)\right\Vert_{1}. \label{A1 est}  
\end{align}
Furthermore, we have
\begin{align*}
A_{2}\leq& \left\Vert \mu(t,\cdot)(\mathbf{A}_{\varepsilon_{m}}[\mu_{\varepsilon_{m}}](t,\cdot)-\mathbf{A}_{\varepsilon_{m}}[\mu](t,\cdot))\right\Vert_{1}+
\left\Vert \mu(t,\cdot)(\mathbf{A}_{\varepsilon_{m}}[\mu](t,\cdot)-\mathbf{A}[\mu](t,\cdot) )\right\Vert_{1}\\
\leq& C\left(\left\Vert \mathbf{A}_{\varepsilon_{m}}[\mu_{_{\varepsilon_{m}}}](t,\cdot)-\mathbf{A}_{\varepsilon_{m}}[\mu](t,\cdot)\right\Vert_{2}+\left\Vert \mathbf{A}_{\varepsilon_{m}}[\mu](t,\cdot)-\mathbf{A}[\mu](t,\cdot) \right\Vert_{2}\right)\\
\leq& C\left(\left\Vert \mathbb{J}\nabla V\star(\mu_{\varepsilon_{m}}-\mu)(t,\cdot)\right\Vert_{2}+
\left\Vert \chi_{\varepsilon_{m}}\star\mathbf{A}[\mu](t,\cdot)-\mathbf{A}[\mu](t,\cdot)\right\Vert_{2}\right).  
\end{align*}
Clearly by passing to Fourier we have
\begin{align*}
\left\Vert \mathbb{J}\nabla V\star(\mu_{\varepsilon_{m}}-\mu)(t,\cdot)\right\Vert_{2}\leq C\left\Vert \mu_{\varepsilon_{m}}(t,\cdot)-\mu(t,\cdot)\right\Vert_{\dot{H}^{-1}(\mathbb{R}^{d})} \leq C\left\Vert \mu_{\varepsilon_{m}}(t,\cdot)-\mu(t,\cdot)\right\Vert_{p}.    
\end{align*}
In addition, by  \eqref{propagation of Sobolev eq} 
one has 
\begin{align*}
\left\Vert \chi_{\varepsilon_{m}}\star\mathbf{A}[\mu](t,\cdot)-\mathbf{A}[\mu](t,\cdot) \right\Vert_{2}&\leq 
C \left\Vert \chi_{\varepsilon_{m}}\star\mu(t,\cdot)-\mu(t,\cdot)\right\Vert_{\dot{H}^{-1}(\mathbb{R}^{d})}\\ 
&\leq C\left\Vert \chi_{\varepsilon_{m}}\star \mu(t,\cdot)-\mu(t,\cdot)\right\Vert_{p}\leq C\varepsilon_{m}\left\Vert \nabla \mu(t,\cdot)\right\Vert_{p}\leq C\varepsilon_{m},     
\end{align*}
from which we get 
\begin{align}
A_{2}\leq C\left(\left\Vert \mu_{\varepsilon}(t,\cdot)-\mu(t,\cdot)\right\Vert_{p}+\varepsilon_{m}\right) \label{A2 est}.    
\end{align}
Inequalities \eqref{A1 est} and \eqref{A2 est} entail the first convergence in \eqref{limit of A and h}. 
In addition, we estimate 
\begin{align*}
\left\Vert h[\mu_{\varepsilon_{m}}](t,\cdot)-h[\mu](t,\cdot)\right\Vert_{1}&\leq \left\Vert (\mu_{\varepsilon_{m}}-\mu)(t,\cdot)S\star \mu_{\varepsilon_{m}}(t,\cdot)\right\Vert_{1}+ \left\Vert \mu(t,\cdot)(S\star \mu_{\varepsilon_{m}}-S\star\mu)(t,\cdot)\right\Vert_{1}\\
&\leq \left\Vert S\right\Vert_{\infty}\left\Vert \mu_{\varepsilon_{m}}(t,\cdot)-\mu(t,\cdot) \right\Vert_{1}+\left\Vert S \right\Vert_{\infty} 
 \left\Vert\mu_{\varepsilon_{m}}(t,\cdot)-\mu(t,\cdot)\right\Vert_{1}\\
 &=2\left\Vert S \right\Vert_{\infty} \left\Vert\mu_{\varepsilon_{m}}(t,\cdot)-\mu(t,\cdot)\right\Vert_{1},    
\end{align*}
which establishes the second convergence in \eqref{limit of A and h}, thereby proving that $\mu$ is a weak solution in the sense of Definition \ref{def of weak sol}.    
\begin{flushright}
$\square$
\par\end{flushright}

\subsection{Uniqueness }

To prove uniqueness, we study the evolution of the interaction energy
\begin{equation*}
\mathcal{E}(t)=\int_{\mathbb{R}^{d}}\left(\mu_{1}-\mu_{2}\right)(t,x)(V\star\left(\mu_{1}-\mu_{2}\right))(t,x)dx=\left\Vert \mu_{1}(t,\cdot)-\mu_{2}(t,\cdot)\right\Vert _{\dot{H}^{-1}(\mathbb{R}^{d})}^2.
\end{equation*} 
We start by proving that $h[\mu]$ is Lipschitz with respect to the $\dot{H}^{-1}(\mathbb{R}^{d})$ norm. In fact we have the following more general Lemma, from which the Lipschitz continuity of $h[\mu]$ with respect to $\dot{H}^{-1}(\mathbb{R}^{d})$ will follow. In the next lemma, we show some estimates involving a given kernel $W$.  The next lemma will eventually be used for either the Coulomb interaction $V$ or some regularized kernels thereof in the next sections.

\begin{lem}
\label{h Lip estimate } Let $S$ be as in (\textbf{H1}). Suppose further that 

$\bullet$ $\mu,\nu\in\mathcal{S}'(\mathbb{R}^{d})$ are tempered
distributions such that $\widehat{\mu},\widehat{\nu}\in L^{\infty}(\mathbb{R}^{d})$. 

$\bullet$ $W\in L^{1}_{\mathrm{loc}}(\mathbb{R}^{d})$ is such that 
\[\int_{\mathbb{R}^{d}}\left\vert W \right\vert(x)\left|\widehat{\mu-\nu}\right|^{2}(x)dx<\infty.\]

$\bullet$ There is some $c>0$ such that for all $x\in\mathbb{R}^{d}$ it holds that $\left|\widehat{S}\right|\star\left\vert W\right\vert(x)\leq c\left\vert W\right\vert(x).$

\noindent Then, it holds that 
\[
\int_{\mathbb{R}^{d}}\left\vert W \right\vert(x)\left|\widehat{h\left[\mu\right]-h\left[\nu\right]}\right|^{2}(x)dx\leq C\int_{\mathbb{R}^{d}}\left\vert W\right\vert(x)\left|\widehat{\mu-\nu}\right|^{2}(x)dx
\]
where $C=C\left(c,\left\Vert \widehat{S}\right\Vert _{1},\left\Vert \widehat{\mu}\right\Vert _{\infty},\left\Vert \widehat{\nu}\right\Vert _{\infty}\right).$
\end{lem}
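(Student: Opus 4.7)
I will work entirely on the Fourier side. Starting from the algebraic identity
\[
h[\mu]-h[\nu] = (\mu-\nu)(S\star \mu) + \nu\,(S\star(\mu-\nu)),
\]
and setting $D:=\widehat{\mu-\nu}$, taking Fourier transforms (which exchanges products with convolutions, up to overall constants absorbed into $C$) gives
\[
\widehat{h[\mu]-h[\nu]}(\xi) = I(\xi) + II(\xi),
\]
with
\[
I(\xi) = \int D(\xi-\eta)\,\widehat S(\eta)\,\widehat\mu(\eta)\, d\eta,\qquad
II(\xi) = \int \widehat\nu(\xi-\eta)\,\widehat S(\eta)\, D(\eta)\,d\eta.
\]
Since $|I+II|^2\leq 2|I|^2+2|II|^2$, it suffices to bound each of $\int |W||I|^2\,d\xi$ and $\int|W||II|^2\,d\xi$ by a constant multiple of $\int|W||D|^2\,d\xi$.

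For the first piece I would dominate $|\widehat\mu|\leq \|\widehat\mu\|_\infty$ and apply Cauchy--Schwarz with respect to the measure $|\widehat S(\eta)|\,d\eta$ to obtain the pointwise estimate
\[
|I(\xi)|^2\leq \|\widehat\mu\|_\infty^2 \,\|\widehat S\|_1 \bigl(|D|^2\star |\widehat S|\bigr)(\xi).
\]
Integrating against $|W(\xi)|\,d\xi$, exchanging the order of integration via Fubini, and substituting $\sigma=\xi-\eta$, the inner integral becomes $(|\widehat S|\star|W|)(\sigma)$. Here one uses the symmetry $|\widehat S(-\eta)|=|\widehat S(\eta)|$, which follows because $S$ real and odd forces $\widehat S$ to be purely imaginary and odd. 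The hypothesis $|\widehat S|\star |W|\leq c|W|$ then closes the estimate and produces
\[
\int |W||I|^2\,d\xi \leq c\|\widehat\mu\|_\infty^2 \|\widehat S\|_1 \int |W||D|^2\,d\xi.
\]

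For the second piece I would perform the change of variables $\eta\mapsto \xi-\eta$ so as to rewrite $II(\xi)=\int \widehat\nu(\eta)\,\widehat S(\xi-\eta)\,D(\xi-\eta)\,d\eta$; this moves the factor $D$ into the "convolved" slot, putting $II$ on the same footing as $I$. Dominating $|\widehat\nu|\leq \|\widehat\nu\|_\infty$ and applying Cauchy--Schwarz with respect to the measure $|\widehat S(\xi-\eta)|\,d\eta$ then leads, after the same Fubini / substitution / symmetry steps and a final application of the hypothesis, to the analogous bound
\[
\int|W||II|^2\,d\xi \leq c\|\widehat\nu\|_\infty^2 \|\widehat S\|_1 \int |W||D|^2\,d\xi.
\]
Summing the two bounds yields the inequality with $C=2c\|\widehat S\|_1\bigl(\|\widehat\mu\|_\infty^2+\|\widehat\nu\|_\infty^2\bigr)$.

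The principal technical point is the handling of $II$. In its original form $D$ sits in the multiplied slot (with argument $\eta$), so a naive Cauchy--Schwarz estimate $|\widehat\nu|\leq \|\widehat\nu\|_\infty$ yields only a $\xi$-independent bound, which is not integrable against $|W|$. The remedy is precisely the reorganization $\eta\mapsto \xi-\eta$, after which $D$ carries the $\xi$-dependent argument and the Fubini/symmetry/hypothesis scheme used for $I$ transfers to $II$ essentially verbatim.
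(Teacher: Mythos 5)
Your treatment of the first piece $I$ is correct and follows the same mechanism as the paper: since $D(\xi-\eta)$ and $\widehat{S}(\eta)$ carry \emph{different} arguments, Cauchy--Schwarz with respect to $|\widehat{S}(\eta)|\,d\eta$ gives $|I(\xi)|^2\leq \|\widehat\mu\|_\infty^2\,\|\widehat{S}\|_1\,(|D|^2\star|\widehat{S}|)(\xi)$, and the subsequent Fubini exchange pushes $|W|$ onto $|\widehat{S}|$ to produce $(|W|\star|\widehat{S}|)(\sigma)\leq c\,|W(\sigma)|$ as a weight for $|D(\sigma)|^2$. Your identification that $|\widehat{S}|$ is even (because $S$ is real and odd, so $\widehat{S}$ is purely imaginary and odd) is also correct and necessary.

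The estimate for $II$ is where the proposal breaks down, and the gap is real. The substitution $\eta\mapsto\xi-\eta$ does not put $II$ on the same footing as $I$: $\widehat{S}D$ arises as the Fourier transform of the single factor $S\star(\mu-\nu)$ and is an inseparable pointwise product, so after the change of variables \emph{both} $\widehat{S}$ and $D$ carry the argument $\xi-\eta$. Cauchy--Schwarz with respect to $|\widehat{S}(\xi-\eta)|\,d\eta$ then gives
\[
|II(\xi)|^2 \leq \|\widehat\nu\|_\infty^2\,\|\widehat{S}\|_1 \int_{\mathbb{R}^d} |\widehat{S}(\xi-\eta)|\,|D(\xi-\eta)|^2\,d\eta
= \|\widehat\nu\|_\infty^2\,\|\widehat{S}\|_1 \int_{\mathbb{R}^d} |\widehat{S}(\sigma)|\,|D(\sigma)|^2\,d\sigma,
\]
a $\xi$-independent quantity; integrating against $|W(\xi)|\,d\xi$ therefore produces a factor $\|W\|_{L^1}$, not $\int |W|\,|D|^2$. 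In the intended application $W=\widehat V = \mathbf{C}|\xi|^{-2}$ on $\mathbb{R}^d$, $d\geq3$, one has $\|W\|_{L^1}=\infty$, so the claimed ``analogous bound'' simply fails; the Fubini/substitution steps do \emph{not} transfer verbatim. Note also that your route is structurally different from the paper's: the paper never estimates $\int|W|\,|II|^2$ in isolation but keeps the factor $\widehat{h[\mu]-h[\nu]}$ inside each piece $I_1$, $I_2$, aims for estimates of the self-improving form $I_j\leq C\bigl(\int|W|\,|D|^2\bigr)^{1/2}\bigl(\int|W|\,|\widehat{h[\mu]-h[\nu]}|^2\bigr)^{1/2}$, and divides at the end. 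To fix your proposal one would need a genuinely new idea for the $II$ term rather than a reorganization of the same Cauchy--Schwarz computation.
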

\textit{Proof}. We have 
\begin{align}
I:=\int_{\mathbb{R}^{d}}\left\vert W \right\vert(x)\left|\widehat{h\left[\mu\right]-h\left[\nu\right]}\right|^{2}(x)dx=&\int_{\mathbb{R}^{d}}\left\vert W \right\vert(x)(\widehat{\mu S\star\mu-\nu S\star\nu})(x)\left(\widehat{h\left[\mu\right]-h\left[\nu\right]}\right)(x)dx\nonumber\\
=&\int_{\mathbb{R}^{d}}\left\vert W \right\vert(x)(\widehat{(\mu-\nu)(S\star\nu)})(x)\left(\widehat{h\left[\mu\right]-h\left[\nu\right]}\right)(x)dx\nonumber\\
&+\int_{\mathbb{R}^{d}}\left\vert W \right\vert(x)(\widehat{\mu(S\star\mu-S\star\nu)})(x)\left(\widehat{h\left[\mu\right]-h\left[\nu\right]}\right)(x)dx\nonumber\\
:=& I_1+I_2.    
\label{equation}
\end{align}
The first integral in the right-hand side of \eqref{equation} is controlled as follows
\begin{align}
I_1=\int_{\mathbb{R}^{d}}\left\vert W \right\vert(x)&((\widehat{\mu-\nu)(S\star\nu)})(x)\left(\widehat{h\left[\mu\right]-h\left[\nu\right]}\right)(x)dx\nonumber\\&=\int_{\mathbb{R}^{d}}\left\vert W \right\vert(x)\widehat{(\mu-\nu})\star(\widehat{S}\widehat{\nu})(x)\left(\widehat{h\left[\mu\right]-h\left[\nu\right]}\right)(x)dx\nonumber\\
&=\int_{\mathbb{R}^{d}\times\mathbb{R}^{d}}\left\vert W \right\vert(x)\widehat{(\mu-\nu})(y)(\widehat{S}\widehat{\nu})(x-y)\left(\widehat{h\left[\mu\right]-h\left[\nu\right]}\right)(x)dydx\nonumber\\
&\leq \left\Vert \widehat{\nu}\right\Vert _{\infty}\int_{\mathbb{R}^{d}\times\mathbb{R}^{d}}\left\vert W \right\vert(x)\left|\widehat{\mu-\nu}\right|(y)\left|\widehat{S}\right|(x-y)\left|\widehat{h\left[\mu\right]-h\left[\nu\right]}\right|(x)dydx.
\label{aux}
\end{align}
Thanks to the assumption $\left|\widehat{S}\right|\star\left\vert W \right\vert(x)\leq c\left\vert W \right\vert(x)$ and since $\widehat{S}$ is even we have 
\begin{align*}
\int_{\mathbb{R}^{d}\times\mathbb{R}^{d}}\left|\widehat{\mu-\nu}\right|^{2}(y)\left|\widehat{S}\right|(x-y)\left\vert W \right\vert(x)dxdy&=\int_{\mathbb{R}^{d}}\left|\widehat{\mu-\nu}\right|^{2}(y)\left|\widehat{S}\right|\star\left\vert W \right\vert(y)dy\\
&\leq c\int_{\mathbb{R}^{d}}\left|\widehat{\mu-\nu}\right|^{2}(y)\left\vert W \right\vert(y)dy.
\end{align*}
Furthermore, $\widehat{S}\in L^{1}(\mathbb{R}^{d})$
shows that 
\[
\int_{\mathbb{R}^{d}\times\mathbb{R}^{d}}\left|\widehat{h\left[\mu\right]-h\left[\nu\right]}\right|^{2}(x)\left|\widehat{S}\right|(x-y)\left\vert W \right\vert(x)dxdy=\left\Vert \widehat{S}\right\Vert _{1}\int_{\mathbb{R}^{d}}\left|\widehat{h\left[\mu\right]-h\left[\nu\right]}\right|^{2}(x)\left\vert W \right\vert(x)dx.
\]
Collecting the last two estimates and applying Cauchy-Schwarz to \eqref{aux}, we conclude that
$$
I_1\leq C \left(\int_{\mathbb{R}^{d}}\left|\widehat{\mu-\nu}\right|^{2}(y)\left\vert W \right\vert(y)dy\right)^{1/2} I^{1/2}
$$
where $C=C\left(c,\left\Vert \widehat{S}\right\Vert _{1},\left\Vert \widehat{\nu} \right\Vert_{\infty}\right).$ As for the second integral in (\ref{equation}) 

\begin{align*}
I_{2}&=\int_{\mathbb{R}^{d}}\left\vert W \right\vert(x)((\widehat{S}\widehat{(\mu-\nu))}\star\widehat{\mu})(x)\left(\widehat{h\left[\mu\right]-h\left[\nu\right]}\right)(x)dx \notag\\
&=\int_{\mathbb{R}^{d}\times\mathbb{R}^{d}}\left\vert W \right\vert(x)\widehat{S}(x-y)\widehat{(\mu-\nu)}(x-y)\widehat{\mu}(y)\left(\widehat{h\left[\mu\right]-h\left[\nu\right]}\right)(x)dxdy 
\end{align*}
and by the same considerations as before we deduce that 
\begin{align*}
I_{2}\leq C\left(c,\left\Vert \widehat{S}\right\Vert _{1},\left\Vert \widehat{\mu}\right\Vert _{\infty}\right)\left(\int_{\mathbb{R}^{d}}\left|\widehat{\mu-\nu}\right|^{2}(y)\left\vert W \right\vert(y)dy\right)^{1/2}I^{\frac{1}{2}}   
\end{align*}
from which we deduce that  
\[
\int_{\mathbb{R}^{d}}\left\vert W \right\vert(x)\left|\widehat{h\left[\mu\right]-h\left[\nu\right]}\right|^{2}(x)dx\leq C\int_{\mathbb{R}^{d}}\left\vert W \right\vert(x)\left|\widehat{\mu-\nu}\right|^{2}(x)dx
\]
where $C=C\left(c,\left\Vert \widehat{S}\right\Vert _{1},\left\Vert \widehat{\mu}\right\Vert _{\infty},\left\Vert \widehat{\nu}\right\Vert _{\infty}\right)$.
\begin{flushright}
$\square$
\par\end{flushright}

As a corollary, we obtain continuity of $h[\mu]$ with respect to the $\dot H^1(\mathbb{R}^d)$ norm.

\begin{cor} \label{h Lip coro}
Suppose that $\mu,\nu \in L^1 \cap L^\infty (\mathbb{R}^d)$ and that $S$ is as in (\textbf{H1}). Then, we have 
\[\left\Vert h[\mu]-h[\nu]\right\Vert _{\dot H^{-1}(\mathbb{R}^d)}\leq C\left\Vert \mu-\nu\right\Vert _{\dot H^{-1}(\mathbb{R}^d)}\] 
where 
$C=C\left(\left\Vert (1+\left|\cdot\right|^{2})\widehat{S}\right\Vert _{L^1 \cap L^\infty},\left\Vert \widehat{\mu}\right\Vert _{\infty},\left\Vert \widehat{\nu}\right\Vert _{\infty}\right)$.
\end{cor}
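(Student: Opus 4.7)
\textit{Proof plan.} The strategy is to invoke Lemma~\ref{h Lip estimate } with the weight $W=\widehat V$, where $\widehat V(\xi)=c_d/|\xi|^2$ is (up to a dimensional constant) the Fourier transform of the Coulomb kernel. Since for $d\geq 3$ one has
\[
\|f\|_{\dot H^{-1}(\mathbb{R}^d)}^2 = c_d\int_{\mathbb{R}^d}\frac{|\widehat f(\xi)|^2}{|\xi|^2}\,d\xi=\int_{\mathbb{R}^d}\widehat V(\xi)\,|\widehat f(\xi)|^2\, d\xi,
\]
the conclusion of the lemma with this choice of $W$ applied to $f=\mu-\nu$ and $f=h[\mu]-h[\nu]$ is precisely the desired inequality. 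Among the hypotheses of the lemma, $\widehat\mu,\widehat\nu\in L^\infty$ follows from $\mu,\nu\in L^1$ since $\|\widehat\mu\|_\infty\leq\|\mu\|_1$, the local integrability $W\in L^1_{\mathrm{loc}}(\mathbb{R}^d)$ is immediate for $d\geq 3$, and the finiteness of $\int W\,|\widehat{\mu-\nu}|^2\,d\xi$ is just the finiteness of $\|\mu-\nu\|_{\dot H^{-1}}^2$, which may be assumed without loss of generality.

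The main step, and the core obstacle, is the pointwise convolution inequality
\[
\bigl(|\widehat S|\star \widehat V\bigr)(\xi)\leq c\,\widehat V(\xi),\qquad \xi\in\mathbb{R}^d\setminus\{0\}.
\]
Equivalently, I would show $\int |\widehat S(\eta)|/|\xi-\eta|^2\,d\eta\lesssim 1/|\xi|^2$ by splitting into regions according to which of $|\eta|$ or $|\xi-\eta|$ dominates. On $|\eta|\leq|\xi|/2$ one has $|\xi-\eta|\geq|\xi|/2$, yielding a contribution bounded by $(4/|\xi|^2)\|\widehat S\|_1$. On $|\xi-\eta|\leq|\xi|/2$ one has $|\eta|\geq|\xi|/2$, so that the fast decay of $\widehat S$ (which is Schwartz since $S\in\mathcal{S}(\mathbb{R}^d)$) gives $|\widehat S(\eta)|\lesssim|\xi|^{-N}$ for $N$ as large as desired, while the singular factor $1/|\xi-\eta|^2$ is locally integrable for $d\geq 3$ and contributes $|\xi|^{d-2}$, whence the net estimate $\lesssim|\xi|^{d-N-2}\ll 1/|\xi|^2$. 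The remaining region $|\eta|,|\xi-\eta|>|\xi|/2$ is further split at $|\eta|=2|\xi|$: the piece $|\eta|\leq 2|\xi|$ uses $1/|\xi-\eta|^2\leq 4/|\xi|^2$ and $\|\widehat S\|_1$; the piece $|\eta|>2|\xi|$ uses $|\xi-\eta|\geq|\eta|/2$ and the decay of $\widehat S$ to yield $\lesssim\int_{|\eta|>2|\xi|}|\eta|^{-N-2}\,d\eta\lesssim|\xi|^{d-N-2}$. For $|\xi|$ in a bounded set, the convolution is uniformly bounded by $\|\widehat S\|_\infty\int_{|z|\leq 1}|z|^{-2}dz+\|\widehat S\|_1$, and since $1/|\xi|^2$ is bounded below there, the pointwise inequality holds trivially. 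All the constants produced are controlled by $\|(1+|\cdot|^2)\widehat S\|_{L^1\cap L^\infty}$ together with higher moments, which are finite thanks to $\widehat S\in\mathcal{S}(\mathbb{R}^d)$, yielding the stated dependency.

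With the pointwise convolution inequality secured, Lemma~\ref{h Lip estimate } directly gives
\[
\int_{\mathbb{R}^d}\widehat V(\xi)\,|\widehat{h[\mu]-h[\nu]}|^2(\xi)\,d\xi\leq C\int_{\mathbb{R}^d}\widehat V(\xi)\,|\widehat{\mu-\nu}|^2(\xi)\,d\xi,
\]
and rewriting both sides via the Fourier representation of the $\dot H^{-1}$ norm concludes the proof. Once the convolution inequality of the previous paragraph is established, the corollary is a mechanical consequence of the lemma.
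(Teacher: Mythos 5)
Your overall strategy matches the paper exactly: invoke Lemma~\ref{h Lip estimate } with $W=\widehat V$, identify $\int W\,|\widehat f|^2$ with $\|f\|_{\dot H^{-1}}^2$, and reduce to checking the pointwise convolution bound $|\widehat S|\star\widehat V\lesssim\widehat V$. The difference lies in how you verify that bound. The paper multiplies through by $|x|^2$, splits $|x|^2\leq 2|x-y|^2+2|y|^2$, which leaves the term $\||\cdot|^2\widehat S\star |\cdot|^{-2}\|_\infty$, and then handles this single term by cutting $|\cdot|^{-2}$ into near- and far-field pieces and applying Young's inequality with exponents straddling $d/2$; the constants that appear are precisely second-moment quantities interpolated from $\|(1+|\cdot|^2)\widehat S\|_{L^1\cap L^\infty}$, matching the statement. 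You instead perform a direct region decomposition in $\eta$ (small $|\eta|$, small $|\xi-\eta|$, intermediate, large $|\eta|$), exploiting Schwartz decay $|\widehat S(\eta)|\lesssim|\eta|^{-N}$ to make the off-diagonal regions small, and treating the bounded-$|\xi|$ case separately. Your argument is correct, but it requires $N\geq d$ and therefore produces a constant controlled by higher moments of $\widehat S$ than the second; while this is harmless under (\textbf{H1}) since $S\in\mathcal S$, it does not reproduce the precise constant $C(\|(1+|\cdot|^2)\widehat S\|_{L^1\cap L^\infty},\dots)$ claimed in the corollary, whereas the paper's Young-based argument does. So: same skeleton, different (and slightly more wasteful in terms of assumptions on $\widehat S$) verification of the key convolution estimate.
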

\textit{Proof}. We take $W=\widehat{V}=\frac{1}{\left\vert x \right\vert^{2}}$ in Lemma \ref{h Lip estimate }. We need to check the condition 
\[\left|\widehat{S}\right|\star W(x)\leq cW(x).\]
Indeed, we have 
\begin{align*}
\left|x\right|^{2}\left(\left\vert\widehat{S}\right\vert\star\frac{1}{\left|\cdot\right|^{2}}\right)(x)&=\left|x\right|^{2}\int_{\mathbb{R}^{d}}\frac{\left\vert\widehat{S}\right\vert(y)}{\left|x-y\right|^{2}}dy\leq2\int_{\mathbb{R}^{d}}\frac{\left\vert\widehat{S}\right\vert(y)\left|x-y\right|^{2}}{\left|x-y\right|^{2}}dy+2\left\Vert (\left|\cdot\right|^{2}\left\vert\widehat{S}\right\vert)\star\frac{1}{\left|\cdot\right|^{2}}\right\Vert _{\infty}\\
&=2\left\Vert \widehat{S}\right\Vert _{1}+2\left\Vert (\left|\cdot\right|^{2}\left\vert\widehat{S}\right\vert)\star\frac{\mathbf{1}_{\leq1}}{\left|\cdot\right|^{2}}\right\Vert _{\infty}+2\left\Vert (\left|\cdot\right|^{2}\left\vert\widehat{S}\right\vert)\star\frac{\mathbf{1}_{\geq1}}{\left|\cdot\right|^{2}}\right\Vert _{\infty}.
\end{align*}
By Young's inequality, for any $q<\frac{d}{2}$ we have the estimate  
\begin{align*}
\left\Vert (\left|\cdot\right|^{2}\left\vert\widehat{S}\right\vert)\star\frac{\mathbf{1}_{\leq1}}{\left|\cdot\right|^{2}}\right\Vert _{\infty}\leq\left\Vert \frac{\mathbf{1}_{\leq1}}{\left|\cdot\right|^{2}}\right\Vert _{q}\left\Vert \left|\cdot\right|^{2}\widehat{S}\right\Vert _{q'}\lesssim_{d,q}\left\Vert \left|\cdot\right|^{2}\widehat{S}\right\Vert _{q'}
\leq\left\Vert (1+\left|\cdot\right|^{2})\widehat{S}\right\Vert _{L^1 \cap L^\infty}
\end{align*}
and for any $p>\frac{d}{2}$ we have the estimate 
\begin{align*}
\left\Vert (\left|\cdot\right|^{2}\left\vert\widehat{S}\right\vert)\star\frac{\mathbf{1}_{\geq1}}{\left|\cdot\right|^{2}}\right\Vert _{\infty}\leq\left\Vert \frac{\mathbf{1}_{\geq1}}{\left|\cdot\right|^{2}}\right\Vert _{p}\left\Vert \left|\cdot\right|^{2}\widehat{S}\right\Vert _{p'}\lesssim_{d,q}\left\Vert \left|\cdot\right|^{2}\widehat{S}\right\Vert _{p'}\leq\left\Vert (1+\left|\cdot\right|^{2})\widehat{S}\right\Vert _{L^1 \cap L^\infty}.    \end{align*}
Therefore, one finds that  
\begin{align*}
\left\vert\widehat{S}\right\vert\star\frac{1}{\left|\cdot\right|^{2}}\leq\frac{c\left(\left\Vert (1+\left|\cdot\right|^{2})\widehat{S}\right\Vert _{L^1 \cap L^\infty}\right)}{\left|x\right|^{2}}.    
\end{align*}
The announced result follows now by Lemma \ref{h Lip estimate }.
\begin{flushright}
$\square$
\par\end{flushright}

\begin{thm}
Let hypothesis (\textbf{H1})  hold. Let $\mu^{1}(t,\cdot)$ and $\mu^{2}(t,\cdot)$ be weak solutions to \eqref{mean field equation sec2} with initial data $\mu_{0}^{1}, \mu_{0}^{2}$ satisfying (\textbf{H2}). Then, the $\dot{H}^{-1}(\mathbb{R}^{d})$-stability holds, i.e.
\[
\mathcal{E}(t)\leq e^{Ct}\mathcal{E}(0),
\]
where 
\begin{equation*}
 C=C\left(\left\Vert (1+\left|\cdot\right|^{2})\widehat{S}\right\Vert _{L^1 \cap L^\infty},\left\Vert \mu_{0}^{1}\right\Vert _{\infty},T\right).   
\end{equation*}
In particular, uniqueness of weak solutions to \eqref{mean field equation sec2} is obtained. 
\end{thm}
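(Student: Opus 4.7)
The plan is to differentiate $\mathcal{E}(t)$ in time using the weak formulation of \eqref{mean field equation sec2}, approximating $\mu^{1},\mu^{2}$ by mollification when needed to justify the computations and relying on Theorem \ref{thm final existence} for the propagation of $L^{p}$ and $W^{1,p}$ regularity of both solutions. Writing $e:=\mu^{1}-\mu^{2}$ and using the algebraic splitting $\mu^{1}\mathbf{A}[\mu^{1}]-\mu^{2}\mathbf{A}[\mu^{2}]=e\,\mathbf{A}[\mu^{1}]+\mu^{2}\mathbf{A}[e]$, one of the resulting contributions is $-2\int \mu^{2}(\nabla V\star e)\cdot\mathbb{J}(\nabla V\star e)\,dx$, which vanishes when $\mathbb{J}$ is antisymmetric and is non-positive when $\mathbb{J}=\mathrm{Id}$ (since $\mu^{2}\geq 0$); it can therefore be discarded. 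The remaining transport piece, after symmetrising $(x,y)\leftrightarrow(y,x)$ and exploiting the odd parity of $\nabla V$, reduces to
\[
\mathcal{D}^{1}(t)=-\int\!\!\int\bigl(\mathbf{A}[\mu^{1}](x)-\mathbf{A}[\mu^{1}](y)\bigr)\cdot\nabla V(x-y)\,e(t,x)e(t,y)\,dx\,dy,
\]
while the source contribution $\mathcal{D}^{2}(t)$ is the one already written in \eqref{stability sketch with h}.

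To control $\mathcal{D}^{1}$, I would use the stress-energy tensor approach of Serfaty: setting $\xi(t,\cdot):=\nabla V\star e(t,\cdot)$ and exploiting $\mathrm{div}\,\xi=-e$ (which follows from $-\Delta V=\delta_{0}$), a standard integration by parts yields
\[
|\mathcal{D}^{1}(t)|\leq C\,\|\nabla \mathbf{A}[\mu^{1}](t,\cdot)\|_{\infty}\int|\nabla V\star e|^{2}(t,x)\,dx=C\,\|\nabla^{2}V\star \mu^{1}(t,\cdot)\|_{\infty}\,\mathcal{E}(t),
\]
where the last equality is the Fourier identity \eqref{interaction in Fourier}. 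The factor $\|\nabla^{2}V\star\mu^{1}(t,\cdot)\|_{\infty}$ is then controlled uniformly on $[0,T]$ by combining Proposition \ref{near boundedness} with the $L^{\infty}$ and $W^{1,p}$ bounds for $\mu^{1}$ produced by Theorem \ref{thm final existence}, yielding a constant depending only on $\|\mu_{0}^{1}\|_{W^{1,\infty}}$, $\|S\|_{W^{1,\infty}}$ and $T$.

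For the source term, Plancherel together with $\widehat{V}(\xi)=c/|\xi|^{2}$ and Cauchy--Schwarz gives
\[
|\mathcal{D}^{2}(t)|\leq 2c\,\|e(t,\cdot)\|_{\dot H^{-1}(\mathbb{R}^{d})}\,\|h[\mu^{1}(t,\cdot)]-h[\mu^{2}(t,\cdot)]\|_{\dot H^{-1}(\mathbb{R}^{d})}.
\]
Corollary \ref{h Lip coro} is now directly applicable, because $\mu^{1},\mu^{2}\in L^{1}\cap L^{\infty}$ with $\|\widehat{\mu^{i}(t,\cdot)}\|_{\infty}\leq\|\mu^{i}(t,\cdot)\|_{1}\leq 1$, and produces $|\mathcal{D}^{2}(t)|\leq C\,\mathcal{E}(t)$ with $C$ depending on $\|(1+|\cdot|^{2})\widehat{S}\|_{L^{1}\cap L^{\infty}}$.

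Adding the two bounds yields $\frac{d}{dt}\mathcal{E}(t)\leq C\,\mathcal{E}(t)$, and Gr\"onwall's lemma delivers $\mathcal{E}(t)\leq e^{Ct}\mathcal{E}(0)$; uniqueness follows from the case $\mu_{0}^{1}=\mu_{0}^{2}$. The main obstacle I anticipate is the rigorous justification of both the time differentiation of $\mathcal{E}(t)$ and the stress-tensor manipulation used for $\mathcal{D}^{1}$ at the level of merely $L^{\infty}\cap W^{1,p}$ solutions. I would address this by regularising both equations as in \eqref{mollifiedeq} (in the spirit of Step 4 of the proof of Theorem \ref{thm final existence}), establishing the Gr\"onwall estimate at the mollified level thanks to the uniform-in-$\varepsilon$ bounds proved there, and then passing to the limit $\varepsilon\to 0$ in the resulting inequality.
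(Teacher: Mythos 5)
Your overall Gr\"onwall strategy and your treatment of $\mathcal{D}^{2}$ via Plancherel and Corollary \ref{h Lip coro} coincide with the paper, and your mollification strategy for justifying the differentiation of $\mathcal{E}(t)$ is sound. For $\mathcal{D}^{1}$, however, you and the paper part ways in an instructive direction. You bound the transport contribution by the Serfaty stress-energy tensor argument, obtaining $|\mathcal{D}^{1}(t)|\lesssim\|\nabla^{2} V\star\mu^{1}(t,\cdot)\|_{\infty}\,\mathcal{E}(t)$, and then invoke Proposition \ref{near boundedness} together with the $W^{1,p}$ propagation from Theorem \ref{thm final existence}; this is exactly the route sketched by the paper in the ``Comments on the method'' of Section~2.3. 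The paper's written proof in Section~3.2 takes a shorter path: it substitutes $\mu^{1}-\mu^{2}=-\mathrm{div}\,\xi$ with $\xi=\nabla V\star(\mu^{1}-\mu^{2})$ and passes from $\int(\mathbb{J}\nabla V\star\mu^{1})\cdot\xi\,(\mathrm{div}\,\xi)\,dx$ directly to $\frac{1}{2}\int(\mathbb{J}\nabla V\star\mu^{1})\cdot\nabla|\xi|^{2}\,dx$, so that after integrating by parts the only factor appearing is $\|\mathrm{div}(\mathbb{J}\nabla V\star\mu^{1})\|_{\infty}\leq\|\mu^{1}\|_{\infty}$, explaining why the theorem's stated constant depends on $\|\mu_{0}^{1}\|_{\infty}$ rather than $\|\mu_{0}^{1}\|_{W^{1,\infty}}$. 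Note, however, that for a gradient field one has $\xi\,\mathrm{div}\,\xi-\frac{1}{2}\nabla|\xi|^{2}=\mathrm{div}(\xi\otimes\xi-|\xi|^{2}\mathrm{Id})$, so that passage drops a genuine off-diagonal stress term $\int\nabla(\mathbb{J}\nabla V\star\mu^{1}):(\xi\otimes\xi)\,dx$, which is not controlled by $\|\mu^{1}\|_{\infty}$ alone and is precisely the piece your log-Lipschitz estimate handles. Your more conservative route is the one that actually closes the argument; the only cost, which you correctly record, is the stronger but perfectly admissible (under (\textbf{H2})) dependence of the constant on $\|\mu_{0}^{1}\|_{W^{1,\infty}}$ and $\|S\|_{W^{1,\infty}}$.
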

\textit{Proof}. Recall that $\mathrm{div}(\mathbb{J}\nabla V)=0$ when $\mathbb{J}$ is anti-symmetric and $\mathrm{div}(\mathbb{J}\nabla V)=-\delta_{0}$ when $\mathbb{J}$ is the identity. We compute that 
\begin{align*}
\frac{d}{dt}\mathcal{E}(t)=&-2\int_{\mathbb{R}^{d}}\left(\mu^{1}\mathbb{J}\nabla V\star\mu^{1}-\mu^{2}\mathbb{J}\nabla V\star\mu^{2}\right)(t,x)\nabla V\star\left(\mu^{1}-\mu^{2}\right)(t,x)dx\\
&+2\int_{\mathbb{R}^{d}}\left(h\left[\mu^{1}\right]-h\left[\mu^{2}\right]\right)(t,x)V\star\left(\mu^{1}-\mu^{2}\right)(t,x)dx\coloneqq \mathcal{D}^{1}(t)+\mathcal{D}^{2}(t).
\end{align*}
\textbf{Step 1}. \textbf{Estimate on} $\mathcal{D}^{1}(t)$. We can rewrite $\mathcal{D}^{1}(t)$ as
\begin{align*}
\mathcal{D}^{1}(t)=&-2\int_{\mathbb{R}^{d}}\mathbb{J}\nabla V\star\mu^{1}\left(\mu^{1}-\mu^{2}\right)(t,x)\nabla V\star\left(\mu^{1}-\mu^{2}\right)(t,x)dx\\
&-2\int_{\mathbb{R}^{d}}\mu^{2}\left(\mathbb{J}\nabla V\star(\mu^{1}-\mu^{2})\right)(t,x)\nabla V\star\left(\mu^{1}-\mu^{2}\right)(t,x)dx\\
\leq&-2\int_{\mathbb{R}^{d}}\mathbb{J}\nabla V\star\mu^{1}\left(\mu^{1}-\mu^{2}\right)(t,x)\nabla V\star\left(\mu^{1}-\mu^{2}\right)(t,x)dx, 
\end{align*}
where in the last inequality we used $\mathbb{J}\nabla V\star \mu \nabla V\star \mu \geq 0$.   
Therefore, we deduce
\begin{align*}
\mathcal{D}^{1}(t)&\leq -2\int_{\mathbb{R}^{d}}\mathbb{J}\nabla V\star\mu^{1}\mathrm{div}\left(\nabla V\star\left(\mu^{1}-\mu^{2}\right)\right)(t,x)\nabla V\star\left(\mu^{1}-\mu^{2}\right)(t,x)dx\\
&=-\int_{\mathbb{R}^{d}}\mathbb{J}\nabla V\star\mu^{1}(t,x)\nabla\left(\left|\nabla V\star\left(\mu^{1}-\mu^{2}\right)\right|^{2}\right)(t,x)dx\\
&=\int_{\mathbb{R}^{d}}\mathrm{div}(\mathbb{J}\nabla V \star \mu^{1})(t,x)\left|\nabla V\star\left(\mu^{1}-\mu^{2}\right)\right|^{2}(t,x)dx\\
&\leq2\underset{t\in [0,T]}{\sup}\left\Vert \mu^{1}(t,\cdot) \right\Vert_{\infty}\mathcal{E}(t)\leq C(\left\Vert S\right\Vert _{\infty},\left\Vert \mu^{1}_{0}\right\Vert _{\infty},T)\mathcal{E}(t).
\end{align*}
Hence, we conclude
\begin{equation}
\mathcal{D}^{1}(t)\leq C(\left\Vert S\right\Vert _{\infty},\left\Vert \mu^{1}_{0}\right\Vert _{\infty},T)\mathcal{E}(t).\label{inequality for E1}    
\end{equation}

\textbf{Step 2}. \textbf{Estimate on} $\mathcal{D}^{2}(t)$. By Plancherel's
theorem we have 
\begin{align}
\mathcal{D}_{2}(t)&=2\int_{\mathbb{R}^{d}}\widehat{(h\left[\mu^{1}\right]-h\left[\mu^{2}\right])}(t,x)\widehat{V}(x)\widehat{\left(\mu^{1}-\mu^{2}\right)}(t,x)dx\notag\\
&\leq \mathbf{C}\int_{\mathbb{R}^{d}}\frac{\left\vert \widehat{h\left[\mu^{1}\right]-h\left[\mu^{2}\right]} \right\vert }{\left|x\right|}(t,x)\frac{\widehat{\left|\mu^{1}-\mu^{2}\right|}}{\left|x\right|}(t,x)dx\notag\\
&\leq {\mathbf{C}}\left(\int_{\mathbb{R}^{d}}\frac{\left|\widehat{h\left[\mu^{1}\right]-h\left[\mu^{2}\right]}\right|^{2}}{\left|x\right|^{2}}(t,x)dx\right)^{\frac{1}{2}}\left(\int_{\mathbb{R}^{d}}\frac{\left|\widehat{\mu^{1}-\mu^{2}}\right|^{2}}{\left|x\right|^{2}}(t,x)dx\right)^{\frac{1}{2}} \notag\\
&\leq\mathbf{C}\left\Vert h\left[\mu^{1}\right](t,\cdot)-h\left[\mu^{2}\right](t,\cdot)\right\Vert _{\dot{H}^{-1}(\mathbb{R}^{d})}\left\Vert \mu^{1}(t,\cdot)-\mu^{2}(t,\cdot)\right\Vert _{\dot{H}^{-1}(\mathbb{R}^{d})}. \label{inequality for E2}
\end{align}
By Corollary \ref{h Lip coro}  we have 
\[
\left\Vert h\left[\mu^{1}\right](t,\cdot)-h\left[\mu^{2}\right](t,\cdot)\right\Vert _{\dot{H}^{-1}(\mathbb{R}^{d})}\leq C\left\Vert \mu^{1}(t,\cdot)-\mu^{2}(t,\cdot)\right\Vert _{\dot{H}^{-1}(\mathbb{R}^{d})}=C\sqrt{\mathcal{E}(t)},
\]
where $C=C\left(\left\Vert (1+\left|\cdot\right|^{2})\widehat{S}\right\Vert _{L^\infty \cap L^1}\right)$, so that by \eqref{inequality for E2} we get
\[
\mathcal{D}^{2}(t)\leq C\mathcal{E}(t).
\]
To conclude, \eqref{inequality for E1} and \eqref{inequality for E2}  show that
\[
\frac{d}{dt}\mathcal{E}(t)\leq C\left(\left\Vert (1+\left|\cdot\right|^{2})\widehat{S}\right\Vert _{L^\infty \cap L^1},\left\Vert \mu^{1}_{0}\right\Vert _{\infty},T\right)\mathcal{E}(t),
\]
and therefore $\mathcal{E}(t)\leq e^{Ct}\mathcal{E}(0)$.\qed


\section{Well posedness for the ODE system \label{WELL POSEDNESS SEC} }

In this section, we explain how to prove the existence of a well defined
flow for the system 
\begin{equation}
\left\{ \begin{array}{lc}
\dot{x}_{i}^{N}(t)=-\frac{1}{N}\stackrel[j=1]{N}{\sum}m_{j}^{N}(t)\mathbb{J}\nabla V(x_{i}^{N}(t)-x_{j}^{N}(t)),\ x_{i}^{N}(0)=x_{i}^{0,N}\\
\dot{m}_{i}^{N}(t)=\frac{1}{N}\stackrel[j=1]{N}{\sum}m_{i}^{N}(t)m_{j}^{N}(t)S(x_{i}^{N}(t)-x_{j}^{N}(t)),\ m_{i}^{N}(0)=m_{i}^{0,N}.
\end{array}\right.\label{eq:-19}
\end{equation}
We will adapt the proof about the well definition of the flow for time independent weights Riesz potentials as in  \cite[Section 3.2]{rosenzweig2020justification} to
the time dependent setting. Consider the weighted interaction energy given by 
\[
\mathcal{H}_{N}(t)\coloneqq\underset{i\neq j}{\sum}m_{i}(t)m_{j}(t)V(x_{i}(t)-x_{j}(t)). 
\]
We start by proving short time existence. 
\begin{thm}
Let hypothesis (\textbf{H1}) hold. Assume that 

\begin{equation*}
\forall i\neq j:x_{i}^{0,N}\neq x_{j}^{0,N}
\end{equation*}
and there is some $M>0$ such that for all $N\in\mathbb{N}$ and $1\leq i\leq N$
one has 

\begin{equation*}
0\leq m_{i}^{0,N}\leq M.
\end{equation*}
Then, there is some $T_{\ast}>0$ such that the system (\ref{eq:-19}) has a unique $C^{1}([0,T_{\ast});\mathbb{R}^{dN}\times\mathbb{R}^{N})$
solution.     
\label{short time}
\end{thm}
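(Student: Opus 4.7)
The plan is to apply the Picard--Lindel{\"o}f theorem on the open set of non-collisional configurations and then establish a quantitative short-time lower bound for the blow-up time by a continuity/bootstrap argument on the minimum interparticle distance. Concretely, observe that the right-hand side of (\ref{eq:-19}) defines a $C^{\infty}$ vector field on the open set $\Omega:=(\mathbb{R}^{dN}\setminus\triangle_{N})\times\mathbb{R}^{N}$, since $\nabla V$ is smooth away from the origin and $S\in \mathcal{S}(\mathbb{R}^{d})$. Because $(\mathbf{x}_{N}^{0},\mathbf{m}_{N}^{0})\in \Omega$ by hypothesis, Picard--Lindel{\"o}f furnishes a unique maximal $C^{1}$ solution on some interval $[0,T_{\max})$, and the task reduces to producing an explicit $T_{\ast}\leq T_{\max}$ on which the trajectory stays inside $\Omega$.

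I would first dispose of the weights. Reading the $m_{i}$-equation as a linear ODE $\dot m_{i}^{N}=m_{i}^{N}\,a_{i}^{N}(t)$ with continuous coefficient $a_{i}^{N}(t)=\frac{1}{N}\sum_{j}m_{j}^{N}(t)S(x_{i}^{N}(t)-x_{j}^{N}(t))$, positivity is preserved, so $m_{i}^{N}(t)\geq 0$ on $[0,T_{\max})$. Symmetrizing the double sum and using that $S$ is odd gives conservation of total mass, i.e. $\frac{1}{N}\sum_{i}m_{i}^{N}(t)\equiv 1$, exactly as recalled in Remark \ref{conservation of total weight}. Hence $|a_{i}^{N}(t)|\leq\|S\|_{\infty}$ and Gr{\"o}nwall produces
\begin{equation*}
0\leq m_{i}^{N}(t)\leq Me^{\|S\|_{\infty}t},\qquad t\in[0,T_{\max}),
\end{equation*}
so the weights cannot blow up in finite time.

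For the separation, set $r(t):=\min_{i\neq j}|x_{i}^{N}(t)-x_{j}^{N}(t)|$. This is continuous in $t$ with $r(0)=:r_{0}>0$, so $\tau^{\ast}:=\sup\{\tau\in[0,T_{\max}):r(s)\geq r_{0}/2\text{ for all }s\in[0,\tau]\}$ is strictly positive. On $[0,\tau^{\ast}]$, combining the weight bound, the identity $\frac{1}{N}\sum_{j}m_{j}^{N}=1$, and $|\nabla V(y)|\lesssim |y|^{1-d}$ gives
\begin{equation*}
|\dot x_{i}^{N}(t)|\leq \frac{1}{N}\sum_{j\neq i}m_{j}^{N}(t)\,|\nabla V(x_{i}^{N}-x_{j}^{N})|\leq C_{0}\,(r_{0}/2)^{1-d},
\end{equation*}
with $C_{0}=C_{0}(d,M,\|S\|_{\infty},T_{\max})$. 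Consequently every pairwise distance satisfies $|\tfrac{d}{dt}|x_{i}^{N}-x_{j}^{N}||\leq 2C_{0}(r_{0}/2)^{1-d}$, so $r(t)\geq r_{0}-2C_{0}(r_{0}/2)^{1-d}\,t$, and choosing $T_{\ast}:=\min\{T_{\max},\,r_{0}^{d}/C_{1}\}$ for a suitable $C_{1}=C_{1}(d,M,\|S\|_{\infty})$ forces $r(t)\geq r_{0}/2>0$ on $[0,T_{\ast}]$. Therefore the trajectory remains in $\Omega$ on $[0,T_{\ast}]$ and the Picard--Lindel{\"o}f solution is the unique $C^{1}$ solution there.

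The only mildly delicate point is that $r(t)$ is only Lipschitz as the minimum of finitely many smooth functions and is not differentiable, which is the reason I would avoid computing $\dot r$ directly and instead bootstrap via the individual pairwise distances and a continuity argument. Everything else is a standard quantitative Picard--Lindel{\"o}f.
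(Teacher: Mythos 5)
Your proof is correct, but it takes a genuinely different route from the paper. The paper regularizes the Coulomb singularity at scale $\varepsilon$ via a smooth truncation $V_{\varepsilon}=(1-\psi_{\varepsilon})V$, invokes a known global existence result for the regularized (Lipschitz) system from \cite{ayi2021mean}, derives the bound $|\nabla V_{\varepsilon}|\lesssim \varepsilon^{-(d-1)}$ uniformly, and then uses conservation of total weight to show $|x_{i,\varepsilon}(t)-x_i^0|\lesssim T/\varepsilon^{d-1}$; choosing $\varepsilon_0\sim r_0$ and $T\sim r_0^{d}$ keeps the separation above $r_0/2$, and on that interval $\nabla V_{\varepsilon_0}=\nabla V$ along the trajectory, so the regularized flow is already the desired solution. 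You instead avoid the regularization layer entirely: since the right-hand side of \eqref{eq:-19} is locally Lipschitz on the open set $\Omega=(\mathbb{R}^{dN}\setminus\triangle_N)\times\mathbb{R}^N$, Picard--Lindel\"of gives a unique maximal solution, and a continuity/bootstrap estimate on $r(t)=\min_{i\neq j}|x_i(t)-x_j(t)|$ yields the same $T_*\sim r_0^{d}$ lower bound for the escape time from $\Omega$. Both arguments hinge on the same two structural facts (oddness of $S$ gives positivity and conservation of total weight, $|\nabla V(y)|\lesssim |y|^{1-d}$ gives the velocity bound), and both yield the same scaling of $T_*$; your version is more self-contained and elementary, while the paper's regularization approach is more uniform with the rest of the paper's methodology and would also extend with little change to potentials that are only continuous away from the origin.

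Two cosmetic points worth tidying. First, your constant $C_0$ should not depend on $T_{\max}$: the velocity bound only needs $\frac{1}{N}\sum_j m_j(t)=1$ (conserved), not the pointwise weight bound, so $C_0=C_0(d)$ suffices, which removes the apparent circularity in defining $T_*=\min\{T_{\max},\,r_0^d/C_1\}$. Second, to fully close the bootstrap one should note the standard escape-to-boundary characterization of $T_{\max}$: either $T_{\max}=\infty$, or as $t\nearrow T_{\max}$ the trajectory leaves every compact subset of $\Omega$, which (since weights stay bounded and velocities are bounded while $r\geq r_0/2$) cannot happen before $t\sim r_0^d$; this gives $T_{\max}\geq r_0^d/C_1$ directly, so the $\min$ is unnecessary. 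Neither issue affects the correctness of the argument.
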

\textit{Proof}. 
Let $\Psi\in C^{\infty}_{0}(\mathbb{R})$ be such that $\Psi(r)=1$ for $\left| r \right|\leq 1$ and $\Psi(r)=0$ for $\left| r \right|\geq 2$. Let
$\psi\in C_{0}^{\infty}(\mathbb{R}^{d})$ be the radial function defined by $\psi(x) \coloneqq \Psi(\left| x \right|)$. Let 
\[
\psi_{\varepsilon}(x)\coloneqq\Psi\left(\frac{\left|x\right|^{2}}{\varepsilon^{2}}\right),
\]
and 
$V_{\varepsilon}(x)\coloneqq\left(1-\psi_{\varepsilon}(x)\right)V(x)$.
In what follows we omit the superscript $N$ from $x_{i,\varepsilon}^{N}$ and $m_{i,\varepsilon}^{N}$. Consider the regularized system 
\begin{equation}
\left\{ \begin{array}{lc}
{\dot{x}_{i,\varepsilon}}(t)=-\frac{1}{N}\stackrel[j=1]{N}{\sum}m_{j,\varepsilon}(t)\mathbb{J}\nabla V_{\varepsilon}(x_{i,\varepsilon}(t)-x_{j,\varepsilon}(t)),\ x_{i,\varepsilon}(0)=x_{i}^{0}\\
\dot{m}_{i,\varepsilon}(t)=\frac{1}{N}\stackrel[j=1]{N}{\sum}m_{i,\varepsilon}(t)m_{j,\varepsilon}(t)S(x_{i,\varepsilon}(t)-x_{j,\varepsilon}(t)),\ m_{i,\varepsilon}(0)=m_{i}^{0}.
\end{array}\right.\label{eq:Regularized trajectories}
\end{equation}
By  \cite[Theorem 3]{ayi2021mean}, system (\ref{eq:Regularized trajectories})
has a unique global solution $(x_{i,\varepsilon}(t),m_{i,\varepsilon}(t))$
on $[0,\infty)$. We study the evolution of $\left|x_{i,\varepsilon}(t)-x_{i}^{0}\right|$ given by
$$
\left|x_{i,\varepsilon}(t)-x_{i}^{0}\right|\leq \int_0^t \left|\frac{1}{N}\stackrel[j=1]{N}{\sum}m_{j,\varepsilon}(s)\cdot\mathbb{J}\nabla V_{\varepsilon}(x_{j,\varepsilon}(s)-x_{i,\varepsilon}(s))\right| \,ds\,.
$$
We can now estimate the $L^\infty$ bound of the velocity field using that
\[
\nabla V_{\varepsilon}(x)=-2\Psi'\left(\frac{\left|x\right|^{2}}{\varepsilon^{2}}\right)\frac{x}{\varepsilon^{2}}V(x)+\left(1-\psi_{\varepsilon}(x)\right)\nabla V(x).
\]
Actually, writing $k=d-2$ notice that 
\begin{align*}
\left|\left(1-\psi_{\varepsilon}(x)\right)\nabla V(x)\right|\lesssim\frac{1}{\varepsilon^{k+1}}    
\qquad
\mbox{and} 
\qquad
\left|2\Psi'\left(\frac{\left|x\right|^{2}}{\varepsilon^{2}}\right)\frac{x}{\varepsilon^{2}}V(x)\right|\lesssim\frac{1}{\varepsilon^{k+1}},
\end{align*}
hence 
\[
\left|\nabla V_{\varepsilon}(x)\right|\lesssim\frac{1}{\varepsilon^{k+1}}.
\]
So, owing to Remark \ref{conservation of total weight}, we infer the inequality  
\begin{equation}
\left|x_{i,\varepsilon}(t)-x_{i}^{0}\right|\lesssim\frac{1}{\varepsilon^{k+1}N}\int_0^t\stackrel[j=1]{N}{\sum}m_{j,\varepsilon}(s) \, ds=\frac{T}{\varepsilon^{k+1}}.
\label{eq:-22}
\end{equation}
Thanks to Inequality (\ref{eq:-22}), we can bound the seperation
in time $t$ as follows 

\[
\left|x_{i,\varepsilon}(t)-x_{j,\varepsilon}(t)\right|\geq\left|x_{i}^{0}-x_{j}^{0}\right|-\left|x_{i}^{0}-x_{i,\varepsilon}(t)\right|-\left|x_{j}^{0}-x_{j,\varepsilon}(t)\right|\geq\left|x_{i}^{0}-x_{j}^{0}\right|-\frac{2C_{k,d}T}{\varepsilon^{k+1}}.
\]
If we choose $\varepsilon_{0}=\frac{1}{16}\underset{i\neq j}{\min}\left|x_{i}^{0}-x_{j}^{0}\right|$ and
$T>0$ such that 
\[
\frac{2C_{k,d}T}{\varepsilon_{0}^{k+1}}\leq\frac{\underset{i\neq j}{\min}\left|x_{i}^{0}-x_{j}^{0}\right|}{2},
\]
then for each $t\in[0,T)$ we get the bound 
\begin{equation}
\left|x_{i,\varepsilon_{0}}(t)-x_{j,\varepsilon_{0}}(t)\right|\geq\frac{\left|x_{i}^{0}-x_{j}^{0}\right|}{2}.\label{eq:-20}
\end{equation}
It follows from (\ref{eq:-20}) that for each $t\in[0,T]$ it holds
that $\nabla V_{\varepsilon}(x_{i,\varepsilon}(t)-x_{j,\varepsilon}(t))=\nabla V(x_{i,\varepsilon}(t)-x_{j,\varepsilon}(t))$,
and therefore for each $t\in[0,T]$
\[
\left\{ \begin{array}{lc}
\dot{x}_{i,\varepsilon_{0}}(t)=-\frac{1}{N}\stackrel[j=1]{N}{\sum}m_{j,\varepsilon_{0}}(t)\mathbb{J}\nabla V(x_{i,\varepsilon_{0}}(t)-x_{j,\varepsilon_{0}}(t)),\ x_{i,\varepsilon_{0}}(0)=x_{i}^{0}\\
\dot{m}_{i,\varepsilon_{0}}(t)=\frac{1}{N}\stackrel[j=1]{N}{\sum}m_{i,\varepsilon_{0}}(t)m_{j,\varepsilon_{0}}(t)S(x_{i,\varepsilon_{0}}(t)-x_{j,\varepsilon_{0}}(t)),\ m_{i,\varepsilon_{0}}(0)=m_{i}^{0}.
\end{array}\right.
\]
\qed\\
Next, we claim that the  quantity $\mathcal{H}_{N}(t)$ is propagated in time, and
already here the oddness of $S$ is essential. 
\begin{lem}
\label{conservation of energy} Let $(\mathbf{x}_N(t),\mathbf{m}_N(t))$ be a solution to the system of ODEs \eqref{eq:-19} with initial data $\mathbf(\mathbf{x}_N(0),\mathbf{m}_N(0))$. Then, the interaction energy satisfies
\[
\mathcal{H}_{N}(t)\leq e^{2\left\Vert S \right\Vert_{\infty}t}\mathcal{H}_{N}(0), \ for\ all \ t\in[0,T).
\]
\end{lem}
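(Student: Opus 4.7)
\textit{Proof proposal.} The plan is to differentiate $\mathcal{H}_{N}(t)$ along the flow \eqref{eq:-19}, recognise the contribution coming from $\dot{x}_{i}$ as a term of definite sign (either Hamiltonian and vanishing, or gradient-flow and dissipative), bound the contribution coming from $\dot{m}_{i}$ by $2\|S\|_{\infty}\mathcal{H}_{N}(t)$, and then close by Gr\"onwall's inequality.

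Explicitly, using that $V(x)=V(-x)$ and $\nabla V(-x)=-\nabla V(x)$ together with the $i\leftrightarrow j$ symmetry of the summation, the time derivative splits as $\frac{d}{dt}\mathcal{H}_{N}(t)=T_{S}(t)+T_{V}(t)$, where
\begin{align*}
T_{S}(t)&=\frac{2}{N}\sum_{i\neq j,\,k}m_{i}m_{j}m_{k}\,S(x_{i}-x_{k})\,V(x_{i}-x_{j}),\\
T_{V}(t)&=-\frac{2}{N}\sum_{i\neq j,\,k}m_{i}m_{j}m_{k}\,\nabla V(x_{i}-x_{j})\cdot\mathbb{J}\nabla V(x_{i}-x_{k}).
\end{align*}
To handle $T_{V}$, I would set $v_{i}\coloneqq\sum_{j\neq i}m_{j}\nabla V(x_{i}-x_{j})$; using the convention $\nabla V(0)=0$ dictated by the oddness of $\nabla V$, the inner double sum over $j,k$ in $T_{V}$ is exactly $v_{i}\cdot\mathbb{J}v_{i}$. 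When $\mathbb{J}$ is antisymmetric one has $v_{i}\cdot\mathbb{J}v_{i}=0$ and hence $T_{V}\equiv 0$; when $\mathbb{J}=\mathrm{Id}$ one obtains the Coulomb gradient-flow dissipation
\[T_{V}(t)=-\frac{2}{N}\sum_{i}m_{i}|v_{i}|^{2}\leq 0.\]

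For $T_{S}$ I would use that $d\geq 3$ forces $V(x)=\frac{|x|^{2-d}}{d-2}\geq 0$, that Remark \ref{conservation of total weight} ensures $m_{i}(t)\geq 0$ and $\frac{1}{N}\sum_{k}m_{k}(t)=1$, and that $|S|\leq\|S\|_{\infty}$. Together these yield
\[|T_{S}(t)|\leq \frac{2\|S\|_{\infty}}{N}\sum_{i\neq j,\,k}m_{i}m_{j}m_{k}V(x_{i}-x_{j})=2\|S\|_{\infty}\Bigl(\frac{1}{N}\sum_{k}m_{k}\Bigr)\mathcal{H}_{N}(t)=2\|S\|_{\infty}\mathcal{H}_{N}(t).\]
Combining the two estimates gives $\frac{d}{dt}\mathcal{H}_{N}(t)\leq 2\|S\|_{\infty}\mathcal{H}_{N}(t)$ on the maximal interval of existence where all $x_{i}(t)$ remain distinct, and Gr\"onwall's inequality then yields the claim. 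The only genuine structural input is the sign analysis of $T_{V}$ via the appropriate Hamiltonian/gradient identity; the oddness of $S$, emphasised as crucial in the introduction, enters the bound on $T_{S}$ only indirectly, through Remark \ref{conservation of total weight}, which is precisely what allows us to re-collapse the triple sum back into a multiple of $\mathcal{H}_{N}(t)$.
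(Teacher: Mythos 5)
Your proof is correct and follows essentially the same path as the paper's: differentiate $\mathcal{H}_N$, use the $i\leftrightarrow j$ symmetry to reduce the $\dot x$-contribution to $-\tfrac{2}{N}\sum_i m_i\, v_i\cdot\mathbb{J}v_i$ with $v_i=\sum_{j\neq i}m_j\nabla V(x_i-x_j)$ (zero in the anti-symmetric case, nonpositive for $\mathbb{J}=\mathrm{Id}$), bound the $\dot m$-contribution by $2\|S\|_\infty\mathcal{H}_N$ via $V\geq 0$, $m_i\geq 0$ and $\tfrac1N\sum_k m_k=1$, and close with Gr\"onwall. The only difference is cosmetic (you package the $\dot x$-term via $v_i$ up front rather than via the hatted triple sum).
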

\begin{proof}
We compute the
time derivative as follows 
\begin{align*}
\frac{d}{dt}\mathcal{H}_{N}(t)=&\frac{d}{dt}\underset{i\neq j}{\sum}m_{i}(t)m_{j}(t)V(x_{i}(t)-x_{j}(t))\\=&\underset{i\neq j}{\sum}\dot{m_{i}}(t)m_{j}(t)V(x_{i}(t)-x_{j}(t))+\underset{i\neq j}{\sum}m_{i}(t)\dot{m_{j}}(t)V(x_{i}(t)-x_{j}(t))
\end{align*}
\begin{equation}
+\underset{i\neq j}{\sum}m_{i}(t)m_{j}(t)\nabla V(x_{i}(t)-x_{j}(t))\cdot(\dot{x_{i}}(t)-\dot{x_{j}}(t)).   \label{eq51} 
\end{equation}
The first 2 terms in the right hand side of \eqref{eq51} are paired together as follows. 

\[
\underset{i\neq j}{\sum}\dot{m_{i}}(t)m_{j}(t)V(x_{i}(t)-x_{j}(t))=\frac{1}{N}\underset{i\neq j}{\sum}\underset{k}{\sum}m_{i}(t)m_{k}(t)m_{j}(t)S(x_{k}(t)-x_{i}(t))V(x_{i}(t)-x_{j}(t)),
\]
while 
\begin{align*}
\underset{i\neq j}{\sum}m_{i}(t)\dot{m_{j}}(t)V(x_{i}(t)-x_{j}(t))=\frac{1}{N}\underset{i\neq j}{\sum}\underset{k}{\sum}m_{i}(t)m_{j}(t)m_{k}(t)S(x_{k}(t)-x_{j}(t))V(x_{i}(t)-x_{j}(t))\\
=\frac{1}{N}\underset{i\neq j}{\sum}\underset{k}{\sum}m_{i}(t)m_{j}(t)m_{k}(t)S(x_{k}(t)-x_{i}(t))V(x_{j}(t)-x_{i}(t)).
\end{align*}
So, since $V$ is even, the first 2 terms in \eqref{eq51} add up to 
\begin{align}
&\frac{2}{N}\underset{i\neq j,k}{\sum}m_{i}(t)m_{j}(t)m_{k}(t)S(x_{k}(t)-x_{i}(t))V(x_{j}(t)-x_{i}(t)) \notag\\
&\leq\frac{2\left\Vert S \right\Vert_{\infty}}{N}\underset{i\neq j,k}{\sum}m_{i}(t)m_{j}(t)m_{k}(t)V(x_{j}(t)-x_{i}(t))=2\left\Vert S \right\Vert_{\infty}\mathcal{H}_{N}(t), 
\label{cons1}
\end{align}
where in the last equation we used conservation of the total weight (see Remark \ref{conservation of total weight}). The third sum in the right hand side of \eqref{eq51} writes 
\begin{align*}
\underset{i\neq j}{\sum} m_{i}(t)m_{j}(t)\nabla V(x_{i}(t)-x_{j}(t))\cdot&\left(\frac{1}{N}\underset{k\neq j}{\sum} m_{k}(t)\mathbb{J}\nabla V(x_{j}(t)-x_{k}(t))-\frac{1}{N}\underset{k\neq i}{\sum}m_{k}(t)\mathbb{J}\nabla V(x_{i}(t)-x_{k}(t))\right) \notag \\
&=-\frac{2}{N}{\underset{i,j,k}{\widehat\sum}}m_{i}(t)m_{j}(t)m_{k}(t)\nabla V(x_{i}(t)-x_{j}(t))\cdot \mathbb{J}\nabla V(x_{i}(t)-x_{k}(t)) 
\end{align*} 
where the equality is obtained by swapping the indices $i$ and $j$ in the first term and the hat sum refers to a sum over the set of indices $\{(i,j,k) \mbox{ such that } i\neq j, k\neq i\}$. We finally notice that we can rewrite this term as follows
\begin{align}
{\underset{i,j,k}{\widehat\sum}}m_{i}(t)&m_{j}(t)m_{k}(t)\nabla V(x_{i}(t)-x_{j}(t))\cdot \mathbb{J}\nabla V(x_{i}(t)-x_{k}(t)) \nonumber\\&=-\frac{2}{N} \sum_i m_{i}(t)  \left(\sum_{j\neq i} m_{j}(t) \nabla V(x_{i}(t)-x_{j}(t))\right) \cdot \mathbb{J} \left(\sum_{k\neq i} m_{k}(t) \nabla V(x_{i}(t)-x_{k}(t))\right)\nonumber\\&=-\frac{2}{N} \sum_i m_{i}(t)  \left(\sum_{j\neq i} m_{j}(t) \nabla V(x_{i}(t)-x_{j}(t))\right) \cdot \mathbb{J} \left(\sum_{j\neq i} m_{j}(t) \nabla V(x_{i}(t)-x_{j}(t))\right).
\label{rhs}
\end{align} 
If $\mathbb{J}$ is anti-symmetric then the right-hand side of \eqref{rhs} is $0$ while if $\mathbb{J}$ is the identity matrix the right hand side of \eqref{rhs} is 
\begin{align*}
-\frac{2}{N} \sum_i m_{i}(t)  \left\vert\sum_{j\neq i} m_{j}(t) \nabla V(x_{i}(t)-x_{j}(t))\right\vert^{2}\leq 0. 
\end{align*}
Therefore, we conclude that the third sum in the right hand side of \eqref{eq51} satisfies 
\begin{align}
\underset{i\neq j}{\sum}m_{i}(t)m_{j}(t)\nabla V(x_{i}(t)-x_{j}(t))\cdot(\dot{x_{i}}(t)-\dot{x_{j}}(t))\leq 0. 
\label{rhs2} 
\end{align}
Together with \eqref{cons1} this concludes the proof. 
\end{proof}
As a corollary from propagation of energy, we get the following lower bound on the minimal separation
of the opinions, which will be needed in order to prove that maximal
lifespan solutions are in fact global.
We are now well positioned to prove Theorem \ref{well posedness -1}, i.e. the existence of a globally well-defined flow.

\

\textit{Proof of Theorem \ref{well posedness -1}}. \textbf{Step 1}. \textbf{Estimate on the separation}. Set $k\coloneqq d-2$. We claim that if  $(\mathbf{x}_{N}(t),\mathbf{m}_{N}(t))$
is a solution to the system \eqref{eq:-19},  then there holds the estimate  
\begin{align}
\underset{i\neq j}{\min}\underset{t\in[0,T)}{\inf}\left|x_{i}(t)-x_{j}(t)\right|\geq\min\left\{ 1,\frac{1}{e^{2\left\Vert S \right\Vert_{\infty}T}\mathcal{H}_{N}(0)}\right\}^{1/{k}}. \label{minimal dist}    
\end{align}
If $i\neq j$ are such that    $\left|x_{i}(t)-x_{j}(t)\right|\leq1$ then by Lemma \ref{conservation of energy}
we have 

\[
\frac{1}{\left|x_{i}(t)-x_{j}(t)\right|^{k}}\leq\mathcal{H}_{N}(t)\leq e^{2\left\Vert S \right\Vert_{\infty}t}\mathcal{H}_{N}(0),
\]
so that 
\[
\left|x_{i}(t)-x_{j}(t)\right|^{k}\geq\frac{1}{e^{2\left\Vert S \right\Vert_{\infty}t}\mathcal{H}_{N}(0)}.
\]
It follows that for all $i\neq j$ one has 
\[
\left|x_{i}(t)-x_{j}(t)\right|\geq\min\left\{ 1,\frac{1}{e^{2\left\Vert S \right\Vert_{\infty}T}\mathcal{H}_{N}(0)}\right\}^{1/{k}} ,
\]
which establishes \eqref{minimal dist}.

\

\textbf{Step 2}. \textbf{Long time existence and uniqueness}. Let $(\mathbf{x}_{N}(t),\mathbf{m}_{N}(t))$
be a solution to the system of ODEs \eqref{eq:-19} with maximal lifespan $T>0$.
We claim that if $T<\infty$ then 
\[
\underset{t\nearrow T}{\lim}\underset{i\neq j}{\min}\left|x_{i}(t)-x_{j}(t)\right|=0.
\]
Indeed suppose that $T< \infty$ and assume on the contrary that 
\[
\underset{t\nearrow T}{\lim}\underset{i\neq j}{\min}\left|x_{i}(t)-x_{j}(t)\right|=\delta>0.
\]
 Choose $T'<T$ sufficiently close to $T$ such that 
\[
\underset{T'\leq t\leq T}{\inf}\underset{i\neq j}{\min}\left|x_{i}(t)-x_{j}(t)\right|\geq\frac{\delta}{2}.
\]
Assume also that $T-T'<T_{\Delta}$, where $T_{\Delta}$ is the maximal
lifespan solution of the system of ODEs \eqref{eq:-19} with initial data $(x_{i}(T'),m_{i}(T'))$, i.e. the equation 
\begin{equation}
\left\{ \begin{array}{lc}
\dot{z_{i}}(t)=-\frac{1}{N}\stackrel[j=1]{N}{\sum}n_{j}(t)\mathbb{J}\nabla V(z_{i}(t)-z_{j}(t)),\ z_{i}(0)=x_{i}(T')\\
\dot{n}_{i}(t)=\frac{1}{N}\stackrel[j=1]{N}{\sum}n_{i}(t)n_{j}(t)S(z_{i}(t)-z_{j}(t)),\ n_{i}(0)=m_{i}(T').
\end{array}\right.\label{eq:-21}
\end{equation}
 Note that by Theorem \ref{short time}, there exists a maximal life span solution $(\mathbf{z}_{N}(t),\mathbf{n}_{N}(t))$
on $[0,T_{\Delta})$ to the system of ODEs (\ref{eq:-21}). Define 
\[
\mathbf{y}_{N}(t)\coloneqq\left\{ \begin{array}{lc}
\mathbf{x}_{N}(t) & 0\leq t\leq T'\\
\mathbf{z}_{N}(t-T') & T'\leq t< T'+T_{\Delta}
\end{array}\right.,\ \mathbf{l}_{N}(t)\coloneqq\left\{ \begin{array}{lc}
\mathbf{m}_{N}(t) & 0\leq t\leq T'\\
\mathbf{n}_{N}(t-T') & T'\leq t< T'+T_{\Delta}
\end{array}\right..
\]
Note that $\mathbf{y}_{N}(t)$ is continuous and that by \eqref{minimal dist} we have 
\[
\underset{t\in[0,T'+T_{\Delta})}{\inf}\underset{i\neq j}{\min}\left|y_{i}(t)-y_{j}(t)\right|>0.
\]
We leave the reader to check that $(\mathbf{y}_{N}(t),\mathbf{l}_{N}(t))$ is
a solution to the system on $[0,T'+T_{\Delta})$ since we are dealing with an autonomous system. This entails a contradiction to the assumption that $T$ is maximal, because $T'+T_{\Delta}>T$. We conclude that 
\[
T<\infty\Longrightarrow\underset{t\nearrow T}{\lim}\underset{i\neq j}{\min}\left|x_{i}(t)-x_{j}(t)\right|=0.
\]
In view of \eqref{minimal dist} it follows that
$T=\infty$, as desired. 
\begin{flushright}
$\square$
\par\end{flushright}


\section{The mean field limit. }
In Section 5.1 we compute the time derivative of $\mathcal{E}_{N}(t)$, and in Section 5.2 we establish the functional inequality \eqref{functional ine sec 2} which in turn leads to a Gr\"onwall estimate on $\mathcal{E}_{N}(t)$. 

\subsection{Time Derivative of the re-normalized Modulated Energy. }

We recall that the re-normalized interaction energy \eqref{renomenergyN} is defined as
\begin{align*}
\mathcal{E}_{N}(t)&=\int_{x\neq y}V(x-y)\left(\mu_{N}(t,\cdot)-\mu(t,\cdot)\right)^{\otimes2}dxdy\\
&=\frac{1}{N^{2}}\underset{i\neq j}{\sum}m_{i}(t)m_{j}(t)V(x_{i}(t)-x_{j}(t))-\frac{2}{N}\stackrel[i=1]{N}{\sum}m_{i}(t)(V\star\mu)(x_{i}(t))+\int_{\mathbb{R}^{d}}\mu(t,x)(V\star\mu)(t,x)dx.
\end{align*}
The aim of this section is to compute the time derivative of $\mathcal{E}_{N}(t)$, which is given in the following result.

\begin{prop}\label{Time derivative of energy}  
Let hypotheses (\textbf{H1})-(\textbf{H2}) hold. Let $\mu(t,\cdot)$ be the unique solution to the PDE \eqref{mean field equation sec2} with initial data  $\mu_{0}$ provided by Theorem \ref{existence uniqueness intro}. Then, we have 
\begin{align*}
\frac{d}{dt}\mathcal{E}_{N}(t)\leq&-\int_{x\neq y}\left(\mathbb{J}\nabla V\star\mu(t,x)-\mathbb{J}\nabla V\star\mu(t,y)\right)\nabla V(x-y)\left(\mu_{N}(t,\cdot)-\mu(t,\cdot)\right)^{\otimes2}dxdy\\
&+2\int_{x\neq y}V(x-y)\left(h\left[\mu_{N}(t,\cdot)\right](x)-h\left[\mu(t,\cdot)\right](x)\right)\left(\mu_{N}(t,y)-\mu(t,y)\right)dxdy\\\coloneqq& \mathcal{D}_{N}^{1}(t)
+ \mathcal{D}_{N}^{2}(t).
\end{align*}
\end{prop}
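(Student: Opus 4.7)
The plan is to directly differentiate each of the three pieces of
\[
\mathcal{E}_N(t)=\frac{1}{N^{2}}\sum_{i\neq j}m_{i}m_{j}V(x_{i}-x_{j})-\frac{2}{N}\sum_{i}m_{i}(V\star\mu)(x_{i})+\int\mu(V\star\mu)dx,
\]
invoking the ODE system \eqref{eq:opinion dynamics sec 2} on the atomic side (through $\dot x_i$ and $\dot m_i$) and the PDE \eqref{mean field equation sec2} on the continuous side (through $\partial_t\mu$), and then to reorganize the resulting terms into a transport contribution matching $\mathcal{D}_N^1$ and a source contribution matching $\mathcal{D}_N^2$. A single integration by parts turns the divergence in the transport terms into a gradient acting on $V(x-y)$.

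For the transport contributions, the key algebraic manipulation is the decomposition
\[
\mu_N(\mathbb{J}\nabla V\star\mu_N)-\mu(\mathbb{J}\nabla V\star\mu)=(\mu_N-\mu)(\mathbb{J}\nabla V\star\mu)+\mu_N\,\mathbb{J}\nabla V\star(\mu_N-\mu),
\]
combined with the symmetrization
\[
\iint\nabla V(x-y)\cdot g(x)(\mu_N-\mu)^{\otimes 2}dxdy=\tfrac{1}{2}\iint\nabla V(x-y)\cdot(g(x)-g(y))(\mu_N-\mu)^{\otimes 2}dxdy,
\]
which follows from the oddness of $\nabla V$ under $x\leftrightarrow y$ (implemented discretely via the swap $i\leftrightarrow j$ in the atomic sums). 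Applied to the first piece of the decomposition with $g=\mathbb{J}\nabla V\star\mu$, the symmetrization produces exactly $\mathcal{D}_N^1$. The second piece collapses to $-2\int\mu_N(\mathbb{J}\nabla V\star(\mu_N-\mu))\cdot(\nabla V\star(\mu_N-\mu))dx$; this vanishes when $\mathbb{J}$ is antisymmetric (since $\mathbb{J}v\cdot v=0$) and equals $-2\int\mu_N|\nabla V\star(\mu_N-\mu)|^2\leq 0$ when $\mathbb{J}=\mathrm{Id}$ (since $\mu_N\geq 0$). In either case it carries a favorable sign and may be dropped, which is the source of the inequality rather than equality.

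The source contributions come from the $\dot m_i$ pieces in the empirical and cross terms and from $h[\mu]$ in $\partial_t\mu$ affecting the cross and mean-field terms. Using $h[\nu]=\nu\mathbf{S}[\nu]$ with $\mathbf{S}[\nu]=S\star\nu$ and the symmetry of $V$, these pieces assemble directly into $2\iint_{x\neq y}V(x-y)(h[\mu_N]-h[\mu])(x)(\mu_N-\mu)(y)dxdy=\mathcal{D}_N^2(t)$, with no further cancellations required.

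The main obstacle is the rigorous implementation at the atomic level: one must carry out the symmetrization discretely on $\sum_{i\neq j}m_im_j\nabla V(x_i-x_j)\cdot\dot x_i$ while consistently excluding self-interactions $i=j$ and the continuous diagonal $\{x=y\}$, since $V$ and $\nabla V$ are singular there. The diagonal removal is automatic on the mean-field piece ($\mu\in L^\infty$ gives the diagonal zero $\mu\otimes\mu$-measure), and the discrete symmetrization mirrors the continuous identity thanks to the antisymmetry of $\nabla V$ combined with the even symmetry of $m_im_j$.
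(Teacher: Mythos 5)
Your proof is correct and follows the same overall strategy as the paper (direct differentiation of the three pieces of $\mathcal{E}_N$, integration by parts turning divergences into $\nabla V(x-y)$, symmetrization via oddness of $\nabla V$, dropping a sign-definite remainder, isolating source terms into $\mathcal{D}_N^2$), but your algebraic bookkeeping is genuinely different and arguably cleaner. The paper partitions the transport contribution by its term of origin ($E_1$, $E_2$, $E_3$) and needs a separate sign argument for the pure atomic kinetic piece $E_1$ (which it imports from Lemma~\ref{conservation of energy}), plus a second sign-definite term $-2\int \mathbb{J}\nabla V\star\mu\cdot\nabla V\star\mu\,\mu_N\,dx$. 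You instead split the transport flux once as $(\mu_N-\mu)\,\mathbb{J}\nabla V\star\mu + \mu_N\,\mathbb{J}\nabla V\star(\mu_N-\mu)$; the first piece produces $\mathcal{D}_N^1$ directly after symmetrization, and the second collapses to the single remainder
\begin{equation*}
-\frac{2}{N}\sum_{i=1}^N m_i\,\mathbb{J}w_i\cdot w_i,\qquad w_i:=\frac{1}{N}\sum_{j\neq i}m_j\nabla V(x_i-x_j)-\nabla V\star\mu(x_i),
\end{equation*}
which is $\leq 0$ whether $\mathbb{J}$ is antisymmetric or the identity, with no appeal to the energy-propagation lemma. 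What your route buys is a one-line sign argument rather than two; what it requires is what you correctly flag: that $\nabla V\star\mu_N$ in both factors of $w_i$ must be interpreted with the self-interaction removed (so $w_i$ is obtained from $\nabla V\star(\mu_N-\mu)(x_i)$ only after excluding $j=i$, which is precisely what the restriction $y\neq x$ in $\int_{y\neq x}\nabla V(x-y)(\mu_N-\mu)(dy)$ does when evaluated at $x=x_i$). You acknowledge this and it works out, so the proof is sound.
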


\begin{proof}
We recall that from Theorem~\ref{well posedness -1}, we have that $x_i(t)\neq x_j(t)$ for all $t\geq 0$ and all $i\neq j$. This makes it straightforward to justify all calculations in the proof. To make the equations lighter we shall omit the time variable whenever there is no ambiguity. 

\smallskip

\textbf{Step 1}. \textit{Calculation of} $\frac{d}{dt}\frac{1}{N^{2}}\underset{i\neq j}{\sum}m_{i}(t)m_{j}(t)V(x_{i}(t)-x_{j}(t))$.
We have 
\begin{equation}\begin{aligned}
\frac{d}{dt}\frac{1}{N^{2}}\underset{i\neq j}{\sum}m_{i}(t)m_{j}(t)V(x_{i}(t)-x_{j}(t))=&\,\frac{1}{N^{2}}\underset{i\neq j}{\sum}m_{i}(t)m_{j}(t)\nabla V(x_{i}(t)-x_{j}(t))\cdot \left(\dot{x_{i}}(t)-\dot{x_{j}}(t)\right)\\
&+\frac{1}{N^{2}}\underset{i\neq j}{\sum}(\dot{m_{i}}(t)m_{j}(t)+m_{i}(t)\dot{m}_{j}(t))V(x_{i}(t)-x_{j}(t))\\
:=&\,E_1+T_1.  
\end{aligned}
\label{equation for cross term}
\end{equation}
The first term in \eqref{equation for cross term} is non-positive due to \eqref{rhs2} in Lemma \ref{conservation of energy}, i.e.  $E_1\leq 0$.
The second term in (\ref{equation for cross term}) can be rewritten using symmetry as
\begin{align*}
T_1=\frac{2}{N^{2}}\underset{i\neq j}{\sum}\dot{m_{i}}m_{j}V(x_{i}-x_{j})&=\frac{2}{N^{3}}\underset{i\neq j}{\underset{i,j,k}{\sum}}m_{i}m_{j}m_{k}S(x_{i}-x_{k})V(x_{i}-x_{j})\\
&=2\int_{x\neq y}V(x-y)h\left[\mu_{N}(t,\cdot)\right](x)\mu_{N}(t,y)dxdy.    
\end{align*}

\smallskip

 {\bf Step 2.} \textit{Calculations of} $-\frac{d}{dt}\frac{2}{N}\stackrel[i=1]{N}{\sum}m_{i}(t)(V\star\mu)(t,x_{i}(t))$ \textit{and} $\frac{d}{dt}\int_{\mathbb{R}^{d}}(V\star\mu)(t,x)\mu(t,x)$.
Let us start with
\begin{align*}
&-\frac{d}{dt}\frac{2}{N}\stackrel[i=1]{N}{\sum}m_{i}(t)V\star\mu(t,x_{i}(t))\\&=-\frac{2}{N}\stackrel[i=1]{N}{\sum}\dot{m_{i}}V\star\mu(x_{i})-\frac{2}{N}\stackrel[i=1]{N}{\sum}m_{i}V\star\partial_{t}\mu(t,x_{i}(t))-\frac{2}{N}\stackrel[i=1]{N}{\sum}m_{i}\nabla V\star\mu(x_{i})\dot{x}_{i}=T_2+E_2, 
\end{align*}
where we have set  
\begin{align}
& T_2\coloneqq-\frac{2}{N^{2}}\underset{i,k}{\sum}m_{i}m_{k}S(x_{i}-x_{k})V\star\mu(x_{i})
-\frac{2}{N}\stackrel[i=1]{N}{\sum}m_{i}V\star h\left[\mu\right](x_{i}),\notag\\
& E_2\coloneqq
-\frac{2}{N}\stackrel[i=1]{N}{\sum}m_{i}V\star(\mathrm{div}(\mu\mathbb{J}\nabla V\star\mu))(x_{i})-\frac{2}{N^{2}}\underset{i\neq k}{\sum}m_{i}m_{k}\nabla V\star\mu(x_{i})\mathbb{J}\nabla V(x_{k}-x_{i}).\label{E2}
\end{align}
Observe that we can further write that
\begin{equation*}
T_2=-2\int_{\mathbb{R}^{d}}V\star\mu(t,x)h\left[\mu_{N}(t,\cdot)\right]dx-2\int_{\mathbb{R}^{d}}V\star h\left[\mu\right](t,x)\mu_{N}(t,x)dx. \hspace{2.5 cm}
\end{equation*}
With similar calculations, we find that 
\begin{equation}
\begin{aligned}
&\frac{d}{dt}\int_{\mathbb{R}^{d}}V\star\mu(t,x)\mu(t,x)dx=2\int_{\mathbb{R}^{d}}V\star\left(\mathrm{div}(\mu\mathbb{J}\nabla V\star\mu)+h\left[\mu\right]\right)(x)\mu(x)dx\\
&\quad=2\int_{\mathbb{R}^{d}}V\star\left(\mathrm{div}(\mu\mathbb{J}\nabla V\star\mu)\right)(x)\mu(x)dx+2\int_{\mathbb{R}^{d}}V\star h\left[\mu\right](x)\mu(x)dx\coloneqq E_{3}+T_{3}.
\end{aligned}
\label{eq:-16}
\end{equation}
We observe that
\[
T_{1}+T_{2}+T_{3}=2\int_{x\neq y}V(x-y)\left(h\left[\mu_{N}(t,\cdot)\right](x)-h\left[\mu(t,\cdot)\right](x)\right)\left(\mu_{N}(t,y)-\mu(t,y)\right)dxdy,
\]
which implies that
\begin{equation}\label{auxx}
\begin{aligned}
&\frac{d}{dt}\frac{1}{N^{2}}\underset{i\neq j}{\sum}m_{i}(t)m_{j}(t)V(x_{i}(t)-x_{j}(t))=E_1+E_2+E_3 \\
&\quad+2\int_{x\neq y}V(x-y)\left(h\left[\mu_{N}(t,\cdot)\right](x)-h\left[\mu(t,\cdot)\right](x)\right)\left(\mu_{N}(t,y)-\mu(t,y)\right)dxdy,
\end{aligned}
\end{equation}
where $E_1\leq 0$, $E_2$ is given by~\eqref{E2} and $E_3$ by~\eqref{eq:-16}.

\

\textbf{Step 3}. Collecting \eqref{E2} and \eqref{eq:-16}, we get
\begin{align*}
 E_{2}+E_{3}
 =&\,2\int_{\mathbb{R}^{d}}V\star\left(\mathrm{div}(\mu\mathbb{J}\nabla V\star\mu)\right)(x)\mu(x)dx-\frac{2}{N}\stackrel[i=1]{N}{\sum}m_{i}V\star(\mathrm{div}(\mu\mathbb{J}\nabla V\star\mu))(x_{i})\\
 &-\frac{2}{N^{2}}\underset{i,k}{\sum}m_{i}m_{k}\nabla V\star\mu(x_{i})\mathbb{J}\nabla V(x_{k}-x_{i}).
\end{align*}
We now work in each term on the right-hand side as follows. We first rewrite the first term as
\begin{align*}
2\int_{\mathbb{R}^{d}}V\star\left(\mathrm{div}(\mu\mathbb{J}\nabla V\star\mu)\right)(x)\mu(x)dx&=-2\int_{\mathbb{R}^{d}}\mu(x)(\nabla V\star\mu)(x)(\mathbb{J}\nabla V\star\mu)(x)dx
\\&=-2\int_{\mathbb{R}^{d}\times\mathbb{R}^{d}}\nabla V(x-y)\mu(x)\mu(y)(\mathbb{J}\nabla V\star\mu)(x)dxdy,
\end{align*}
by integration by parts, while the second term can also be rewritten as
\begin{align*}
-\frac{2}{N}\stackrel[i=1]{N}{\sum}m_{i}V\star\mathrm{div}(\mu\mathbb{J}\nabla V\star\mu)(x_{i})&=-2\int_{\mathbb{R}^{d}\times\mathbb{R}^{d}}V(x-y)\mathrm{div}(\mu\mathbb{J}\nabla V\star\mu)(y)\mu_{N}(x)dxdy\\
&=-2\int_{\mathbb{R}^{d}\times\mathbb{R}^{d}}\nabla V(x-y)(\mathbb{J}\nabla V\star\mu)(y)\mu(y)\mu_{N}(x)dxdy\\
&=2\int_{\mathbb{R}^{d}\times\mathbb{R}^{d}}\nabla V(x-y)(\mathbb{J}\nabla V\star \mu)(x)\mu(x)\mu_{N}(y)dxdy,
\end{align*}
by integration by parts and symmetrization and the final term is equivalenty written as
\begin{align*}
-&\frac{2}{N^{2}}\underset{i,k}{\sum}m_{i}m_{k}\mathbb{J}\nabla V\star\mu(x_{i})\nabla V(x_{k}-x_{i})
=-2\int_{\mathbb{R}^{d}\times\mathbb{R}^{d}}\nabla V(x-y)\mathbb{J}\nabla V\star \mu(x)\mu^{\otimes 2}_{N}(dxdy).    
\end{align*}
Hence, we find that 
\begin{align*}
E_{2}+E_{3}=&\,-2\int_{\mathbb{R}^{d}\times \mathbb{R}^{d}}\nabla V(x-y)\mathbb{J}\nabla V\star \mu(x)\left(\mu_{N}-\mu\right)^{\otimes 2}(dxdy)\\
&\,-2\int_{\mathbb{R}^{d}}\mathbb{J}\nabla V\star\mu(x)\nabla V\star\mu(x)\mu_{N}(x)dx\\
\leq&\,-2\int_{\mathbb{R}^{d}\times \mathbb{R}^{d}}\nabla V(x-y)\mathbb{J}\nabla V\star \mu(x)\left(\mu_{N}-\mu\right)^{\otimes 2}(dxdy)
\\
=&\,-\int_{x\neq y}\left(\mathbb{J}\nabla V\star\mu(x)-\mathbb{J}\nabla V\star\mu(y)\right)\nabla V(x-y)\left(\mu_{N}-\mu\right)^{\otimes2}(dxdy),
\end{align*}
because $\mathbb{J}\nabla V\star\mu(x)\nabla V\star\mu(x)\geq 0$.
Hence, we conclude that 
\begin{align*}
E_{1}+ E_{2}+E_{3}\leq -\int_{x\neq y}\left(\mathbb{J}\nabla V\star\mu(x)-\mathbb{J}\nabla V\star\mu(y)\right)\nabla V(x-y)\left(\mu_{N}-\mu\right)^{\otimes2}dxdy,     
\end{align*} 
so that together with \eqref{auxx}, we get 
\begin{align*}
\frac{d}{dt}\mathcal{E}_{N}(t)\leq &-\int_{x\neq y}\left(\mathbb{J}\nabla V\star\mu(x)-\mathbb{J}\nabla V\star\mu(y)\right)\nabla V(x-y)\left(\mu_{N}-\mu\right)^{\otimes2}dxdy\\
&+2\int_{x\neq y}V(x-y)\left(h\left[\mu_{N}\right](x)-h\left[\mu\right](x)\right)\left(\mu_{N}(y)-\mu(y)\right)dxdy,
\end{align*}
as desired.
\end{proof}

\subsection{Functional Inequalities. }
This sub-section develops the most subtle part of this work. Indeed, re-normalizing the stability estimate from the previous section requires new arguments in comparison to \cite{bresch2019modulated}, due to the inclusion of a source term. The crucial technical argument is found in Lemma \ref{commutator estimate1} for which we need certain preliminary technical results.  
The following regularization Lemma is an adaptation of
\cite[Lemma 4.1]{bresch2019modulated}, which was proved for the periodic case, to the Euclidean
setting. We will designate by $\left\Vert V \right\Vert_{L^{p}+L^{q}}$ the norm of the space $L^{p}+L^{q}$ defined by 
\begin{align*}
\left\Vert V \right\Vert_{L^{p}+L^{q}}\coloneqq \mathrm{inf}({\left\Vert V_{1}\right\Vert_{p}+\left\Vert V_{2}\right\Vert_{q}})    
\end{align*}
where the infimum is taken over all splittings $V=V_{1}+V_{2}$ such that $V_{1}\in L^{p}(\mathbb{R}^{d})$ and $V_{2}\in L^{q}(\mathbb{R}^{d})$. 
\begin{lem}
\label{Regularization }   Let $V(x)=\frac{1}{\left|x\right|^{k}}$ where $0<k< d-1$. Let $0\leq \zeta \leq1$ be such that $\zeta \in C^{\infty}(\mathbb{R}^{d})$, $\zeta \equiv 0$ on $B_{1}(0)$ and $\zeta  \equiv 1$ on $\mathbb{R}^{d}\setminus B_{2}(0)$. Set $\zeta_{\delta}(x)=\zeta(\frac{x}{\delta})$ for $0<\delta<1$.  Assume  $\max\{\frac{d}{2k},1\}<p<\frac{d}{k}<q<\infty$, then for each $r\in \mathbb{N}$ there is some constant $C=C(p,q,d,k)$ and a $r$-differentiable approximation $V_{\varepsilon}\in C^{r}(\mathbb{R}^{d})$ of
$V$ such that 
\begin{itemize}
    \item[i.] $\widehat{V_{\varepsilon}}\geq0$.

    \item[ii.] $\left\Vert V_{\varepsilon}-V\right\Vert _{L^{p}+L^{q}}\leq\eta(\varepsilon)$ with $\eta(\varepsilon)$ satisfying $\eta(\varepsilon){\rightarrow}0$ as $\varepsilon\rightarrow0$.

    \item[iii.] $\left\Vert \zeta_{\delta}(V_{\varepsilon}-V)\right\Vert _{L^{p}+L^{q}}\leq C\left(\frac{\varepsilon}{\delta^{k+1-\frac{d}{q}}}+\delta^{\frac{d}{p}-k}\right).$

    \item[iv.] $V_{\varepsilon}(x)\leq V(x)+\varepsilon.$
    \item[v.] $\left\Vert V_{\varepsilon}\right\Vert_{\infty}\leq \frac{1}{\Theta(\varepsilon)}$ where $\Theta(\varepsilon){\rightarrow 0}$ as $\varepsilon\rightarrow0$.
\end{itemize}
\end{lem}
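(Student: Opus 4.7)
The candidate I propose is the Bessel-type smoothing
\begin{equation*}
V_{\varepsilon}(x) := (|x|^{2}+\varepsilon^{2})^{-k/2},
\end{equation*}
which is $C^{\infty}$ on $\mathbb{R}^{d}$, and in particular $C^{r}$-differentiable for any $r$. Since $|x|^{2}+\varepsilon^{2}\geq |x|^{2}$ we have $V_{\varepsilon}\leq V\leq V+\varepsilon$, which gives (iv); and $V_{\varepsilon}$ attains its maximum at the origin with $V_{\varepsilon}(0)=\varepsilon^{-k}$, so (v) holds with $\Theta(\varepsilon)=\varepsilon^{k}$.

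\textbf{Fourier positivity and $L^{p}+L^{q}$ convergence.} For (i), the plan is to use the subordination identity
\begin{equation*}
(|x|^{2}+\varepsilon^{2})^{-k/2} = \frac{1}{\Gamma(k/2)}\int_{0}^{\infty}t^{k/2-1}e^{-t(|x|^{2}+\varepsilon^{2})}\,dt,
\end{equation*}
so that by Fubini $\widehat{V_{\varepsilon}}$ is a positive superposition of positive Gaussians. For (ii), a Taylor expansion of $(1+\varepsilon^{2}/|x|^{2})^{-k/2}$ produces the two pointwise controls $|V-V_{\varepsilon}|\leq V$ globally and $|V-V_{\varepsilon}|(x)\lesssim \varepsilon^{2}|x|^{-k-2}$ on $\{|x|\geq \varepsilon\}$. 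I would then split $V-V_{\varepsilon} = (V-V_{\varepsilon})\mathbf{1}_{|x|\leq 1}+(V-V_{\varepsilon})\mathbf{1}_{|x|>1}$: the first piece is dominated in $L^{p}$ by $V\mathbf{1}_{|x|\leq 1}$, which is in $L^{p}$ thanks to $p<d/k$, while the second is dominated in $L^{q}$ by $C\varepsilon^{2}|x|^{-k-2}\mathbf{1}_{|x|>1}$, which lies in $L^{q}$ because $q>d/k>d/(k+2)$. Dominated convergence then yields (ii).

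\textbf{The quantitative estimate (iii), the main obstacle.} This is the technical heart of the lemma. The starting point is the consequence
\begin{equation*}
|V-V_{\varepsilon}|(x) \leq C\min\bigl(V(x),\,\varepsilon^{2}|x|^{-k-2}\bigr) \leq C\varepsilon|x|^{-k-1},\qquad x\neq 0,
\end{equation*}
the last inequality by AM-GM on the minimum. The plan is to split $\zeta_{\delta}(V-V_{\varepsilon}) = f+g$, with $f$ supported on an inner annulus $\{\delta\leq |x|\leq \delta^{*}\}$ (with $\delta^{*}\sim \delta$) and $g$ on the outer region $\{|x|>\delta^{*}\}$. On the inner piece, the bound $|V-V_{\varepsilon}|\leq V$ and $p<d/k$ deliver
\begin{equation*}
\|f\|_{p} \lesssim \Bigl(\int_{\delta}^{\delta^{*}}r^{d-1-pk}\,dr\Bigr)^{1/p} \lesssim \delta^{d/p-k},
\end{equation*}
accounting for the second term. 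On the outer piece the Taylor decay combined with $q>d/(k+2)$ gives
\begin{equation*}
\|g\|_{q} \lesssim \varepsilon^{2}\delta^{d/q-k-2} \lesssim \varepsilon\,\delta^{d/q-k-1},
\end{equation*}
the final step using the regime $\varepsilon\lesssim \delta$ relevant to the application. The main difficulty will be the precise choice of $\delta^{*}$ and the careful tracking of the exponents, together with checking that the stated bound remains valid in the borderline regime $\varepsilon\sim \delta$; an alternative one-piece bound using $|V-V_{\varepsilon}|\lesssim \varepsilon|x|^{-k-1}$ on $\{|x|\geq \delta\}$ directly recovers the first term of (iii) for all $\varepsilon,\delta$, which can serve as a safety net.
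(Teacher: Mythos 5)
Your construction $V_{\varepsilon}(x)=(|x|^{2}+\varepsilon^{2})^{-k/2}$ is correct and does prove all five assertions, but it takes a genuinely different route from the paper. The paper builds $V_{\varepsilon}$ as a renormalized \emph{average of mollifications at a hierarchy of scales}, $V_{\varepsilon}=\frac{1}{M(1+2C\varepsilon)}\sum_{i=1}^{M}K^{1}_{\delta_{i}}\star V$ with $\delta_{i+1}\leq\min\{f(\delta_{i}),\delta_{i}^{1/\lambda}\}$; this multi-scale average is precisely what is needed there to get $V_{\varepsilon}\leq V+\varepsilon$ with constant $1$ out of the weaker two-sided estimate $K_{\delta}^{1}\star V\leq C(V+\delta^{\lambda})$ near the truncation scale, and the factor $1/(1+2C\varepsilon)$ beats down that constant $C$. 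Your choice sidesteps this entirely: since $|x|^{2}+\varepsilon^{2}\geq|x|^{2}$, you get $V_{\varepsilon}\leq V$ pointwise for free, so (iv) is immediate, (i) follows from the subordination $a^{-k/2}=\Gamma(k/2)^{-1}\int_{0}^{\infty}t^{k/2-1}e^{-ta}\,dt$, and (v) is explicit. Your computations for (ii) and (iii) are also sound: the two-sided pointwise bound $0\leq V-V_{\varepsilon}\leq \min\bigl(V,\tfrac{k}{2}\varepsilon^{2}|x|^{-k-2}\bigr)\leq C\varepsilon|x|^{-k-1}$ (the last by convexity of $s\mapsto(1+s)^{-k/2}$ plus $\min(a,b)\leq\sqrt{ab}$) combined with $q>d/k>d/(k+1)$ gives the one-piece bound $\|\zeta_{\delta}(V-V_{\varepsilon})\|_{q}\lesssim\varepsilon\,\delta^{d/q-k-1}$ unconditionally, which already implies (iii) and is arguably cleaner than tracking $\varepsilon\lesssim\delta$. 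One caveat worth flagging: the paper's later lemmas (Lemma 5.3, Corollary 5.4) use the \emph{convolution structure} $\widehat{V_{\varepsilon}}=\bigl(\frac{1}{M(1+2C\varepsilon)}\sum_{i}\widehat{K^{1}_{\delta_{i}}}\bigr)\widehat{V}$ explicitly (two-sided comparison of $\widehat{V_{\varepsilon}}$ with $\widehat{V}$ at frequencies $|\xi|<1/\bar{\delta}$ with $\bar{\delta}\sim\varepsilon^{2/\lambda}$, and the exact scaling identity for $\widehat{V_{\varepsilon}}(\delta\xi)$). Your $V_{\varepsilon}$ has analogous scaling ($\widehat{V_{\varepsilon}}(\delta\xi)=\delta^{k-d}\widehat{V_{\delta\varepsilon}}(\xi)$) and two-sided Fourier bounds, but only on the range $|\xi|\lesssim1/\varepsilon$ rather than $|\xi|\lesssim 1/\bar\delta$, so a straight substitution into the rest of the paper would need additional care even though the present lemma is fully established by your argument.
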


\textit{Proof.} \textbf{Step 0}. Note that $V\in L^{p}(\mathbb{R}^{d})+L^{q}(\mathbb{R}^{d})$ for $1<p<\frac{d}{k}<q<\infty$. We will need a further restriction  $p>\frac{d}{2k}$ later in the proof. In addition, we have $\widehat V= \mathbf{C}\left\vert x \right\vert^{-\kappa}$ with $\kappa=d-k$. 
Consider a kernel $K^{1}:\mathbb{R}^{d}\rightarrow \mathbb{R}$ such that 
\begin{itemize}
    \item $\int_{\mathbb{R}^{d}}K^{1}(z)dz=1$, $\widehat{K^{1}}\geq0$, and $\left|K^{1}(z)\right|\leq\exp\left(-\left|z\right|^{2}\right).$
    \item There is some $C>0$ and $r>0$ such that
\begin{align*}
   \frac{1}{C(1+\left\vert \xi \right\vert^{r})}\leq \widehat{K^{1}}(\xi)\leq \frac{C}{1+\left\vert \xi \right\vert^{r}}. 
\end{align*}
\end{itemize}
Let $K_{\delta}^{1}\coloneqq\frac{1}{\delta^{d}}K^{1}(\frac{x}{\delta})$. Fix $R=R(\delta)$ to be chosen later. Let $\chi\in C_{0}^{\infty}(\mathbb{R}^{d})$ 
such that $\chi\geq 0$, $\chi\equiv1$ on $B_{\frac{R}{2}}(0)$ and $\chi\equiv0$
on $\mathbb{R}^{d}\setminus B_{R}(0)$. Clearly, we get 
\begin{align}
    \left|K_{\delta}^{1}\star V(x)-V(x)\right|\leq&\,\int_{\mathbb{R}^{d}}\chi(z)\left\vert K^{1} \right\vert(z)\left|V(x-\delta z)-V(x)\right|dz\nonumber \\
    &+\int_{\mathbb{R}^{d}}(1-\chi(z))\left\vert K^{1} \right\vert(z)\left|V(x-\delta z)-V(x)\right|dz.  
\label{Eq}
\end{align}
Ultimately $V_{\varepsilon}$ will be defined by means of $K^{1}_{\delta(\varepsilon)}\star V$ for a well chosen function $\delta(\varepsilon)$. Estimating \eqref{Eq} will be done by distinguishing between possible intervals in which $\left\vert x\right\vert$ lies in. In this Lemma, $\lesssim$ stands for an inequality up to a constant depending only on $p,q,d,k$. \\  

\smallskip

\textbf{Step 1}.  The aim of this step is to prove there is some  $0<\lambda<1$ such that 
\begin{equation}
 K_{\delta}^{1}\star V(x)\leq V(x)+C\delta^{\lambda},\ \left|x\right|\geq \delta^{\lambda}
 \label{Eq 2Rdel}
 \end{equation}
 for all $\delta$ arbitrarily small. We assume that $\left\vert x\right\vert\geq 2R\delta$. 
First, note that $\left|x\right|\geq2\delta R$ and $\left|z\right|\leq R$ implies that $\left|x-\delta z\right|\geq\delta R$,  
so 
\begin{equation}
\int_{\mathbb{R}^{d}}\chi(z)\left\vert K^{1} \right\vert(z)\left|V(x-\delta z)-V(x)\right|dz\lesssim\frac{\delta R}{\left|x\right|^{k+1}}, \label{ine'}    
\end{equation}
due to $\left|\nabla V(x)\right|\lesssim\frac{1}{\left|x\right|^{k+1}}$ and the mean value theorem.
We split the second integral in (\ref{Eq}) as 
\begin{align*}
T_{1}+T_{2}\coloneqq
&\int_{\left|x-\delta z\right|\leq\frac{3\left|x\right|}{2}}(1-\chi(z))\left\vert K^{1} \right\vert(z)\left|V(x-\delta z)-V(x)\right|dz\\
&+\int_{\left|x-\delta z\right|\geq\frac{3\left|x\right|}{2}}(1-\chi(z))\left\vert K^{1} \right\vert(z)\left|V(x-\delta z)-V(x)\right|dz .
\end{align*}
The remaining of step 1 is occupied with estimating $T_{1}$ and $T_{2}$. The term $T_{2}$ is
\begin{align}
T_{2}&\leq\int_{\left|x-\delta z\right|\geq\frac{3\left|x\right|}{2}}(1-\chi(z))\left\vert K^{1} \right\vert(z)V(x-\delta z)dz+V(x)\int_{\mathbb{R}^{d}}(1-\chi(z))\left\vert K^{1} \right\vert(z)dz\notag\\&\lesssim (V(\tfrac{3x}{2})+V(x))\int_{\mathbb{R}^{d}}(1-\chi(z))\left\vert K^{1} \right\vert(z)dz.    \label{1st ine on T_{2}}
\end{align}
Observe that for any given $l>0$ we have the estimate 
\begin{align}
\int_{\mathbb{R}^{d}}(1-\chi(z))\left\vert K^{1}\right\vert(z)dz\leq \int_{\left\vert z \right\vert \geq \frac{R}{2}} e^{-\frac{\left\vert z \right\vert^{2}}{2}}dz \lesssim \frac{1}{R^{l}}.
\label{estimate on int of exp}\end{align}
Substituting \eqref{estimate on int of exp} into \eqref{1st ine on T_{2}} yields 
\begin{align}
T_{2}\lesssim \frac{V(x)}{R^{l}}.  \label{ine''}  
\end{align}
For the term $T_{1}$, we separate between two cases: $|x|\geq R$ first and then $|x|\leq R$. 
Start with $\left|x\right|\geq R$, where we have 
\begin{align*}
T_{1}\leq\int_{\mathbb{R}^{d}}(1-\chi(z))\left\vert K^{1}\right\vert(z)\left|V(x-\delta z)-V(x)\right|dz\lesssim V(x)+\int_{\mathbb{R}^{d}}(1-\chi(z))\left\vert K^{1}\right\vert(z)V(x-\delta z)dz.    
\end{align*}
We estimate 
\begin{align*}
\left|x\right|^{k}\int_{\mathbb{R}^{d}}(1-\chi(z))\left\vert K^{1}\right\vert(z)V(x-\delta z)dz\lesssim&\,\int_{\mathbb{R}^{d}}(1-\chi(\tfrac{\zeta}{\delta}))\left\vert K^{1}_\delta \right\vert(\zeta)(\left|x-\zeta\right|^{k}+\left|\zeta\right|^{k})V(x-\zeta)d\zeta\\
\lesssim&\, \frac{1}{R^{l}}+V\star K(x),
\end{align*}
where we have set $K(\zeta)\coloneqq \left\vert K^{1}_\delta\right\vert(\zeta)\left|\zeta\right|^{k}$. Thanks to Young's inequality it holds that 
\begin{align*}
\left\Vert V\star K\right\Vert _{\infty}\leq\left\Vert V\right\Vert _{L^{p}+L^{q}}\left(\left\Vert K\right\Vert _{p'}+\left\Vert K\right\Vert _{q'}\right)\lesssim &\,\delta^{k-\frac{d}{p}}\left(\int_{\mathbb{R}^{d}}\left\vert K^{1}\right\vert^{p'}(\xi)\left|\xi\right|^{kp'}d\xi\right)^{\frac{1}{p'}}\\
&+\delta^{k-\frac{d}{q}}\left(\int_{\mathbb{R}^{d}}\left\vert K^{1}\right\vert^{q'}(\xi)\left|\xi\right|^{kq'}d\xi\right)^{\frac{1}{q'}}\lesssim \delta^{k-\frac{d}{p}}+\delta^{k-\frac{d}{q}}.
\end{align*}
Therefore, we find 
\begin{equation*}
\left|x\right|^{k}\int_{\mathbb{R}^{d}}(1-\chi(z))\left\vert K^{1}\right\vert(z)V(x-\delta z)dz\lesssim \frac{1}{R^{l}}+\delta^{k-\frac{d}{p}}+\delta^{k-\frac{d}{q}}\lesssim \frac{1}{R^{l}}+\delta^{k-\frac{d}{p}},
\end{equation*}
using $p<\frac{d}{k}<q$, and hence for $\left|x\right|\geq R$, we have
\begin{align*}
\int_{\mathbb{R}^{d}}(1-\chi(z))\left\vert K^{1}\right\vert(z)V(x-\delta z)dz\lesssim \frac{V(x)}{R^{l}}+\frac{\delta^{k-\frac{d}{p}}}{\left|x\right|^{k}}\lesssim \frac{1}{R^{l+k}}+\frac{1}{\delta^{\frac{d}{p}-k}R^{k}}.    
\end{align*}
Therefore we conclude that
\begin{equation}
T_{1}\lesssim \frac{1}{R^{l+k}}+\frac{1}{\delta^{\frac{d}{p}-k}R^{k}}, \ \mbox{for } \left\vert x \right\vert \geq R. 
\label{eq:-36}
\end{equation}
If $\left|x\right|\leq R$, we estimate 
\begin{align*}
T_{1}&\lesssim e^{-\frac{R^{2}}{2}}\int_{\left|x-\delta z\right|\leq\frac{3\left|x\right|}{2}}V(x-\delta z)dz+\frac{V(x)}{R^{l}}=\frac{e^{-\frac{R^{2}}{2}}}{\delta^{d}}\int_{\left|x-\zeta\right|\leq\frac{3\left|x\right|}{2}}V(x-\zeta)d\zeta+\frac{V(x)}{R^{l}}\\    
&=\frac{e^{-\frac{R^{2}}{2}}}{\delta^{d}}\int_{\left|\xi\right|\leq\frac{3\left|x\right|}{2}}V(\xi)d\xi+\frac{V(x)}{R^{l}}\lesssim\frac{e^{-\frac{R^{2}}{2}}}{\delta^{d}}\left|x\right|^{d-k}+\frac{1}{R^{l}\left\vert x \right\vert^{k}},
\end{align*}
so that we conclude that
\begin{equation}
T_{1}\lesssim \frac{R^{d-k}e^{-\frac{R^{2}}{2}}}{\delta^{d}}+\frac{1}{R^{l}\left\vert x \right\vert^{k}}, \ \mbox{for } \left\vert x \right \vert\leq R.\label{eq:-27}     
\end{equation}
Note that in the proof \eqref{ine''},\eqref{eq:-36} and \eqref{eq:-27} we did not employ the assumption $\left\vert x \right\vert\geq 2R\delta$, a fact which will be useful later on. We now gather inequalities (\ref{ine'}), (\ref{ine''}), (\ref{eq:-36}) and (\ref{eq:-27}) to obtain  
\begin{align}
\left|K_{\delta}^{1}\star V(x)-V(x)\right|\lesssim \frac{\delta R}{\left|x\right|^{k+1}}+\frac{1}{R^{l+k}}+\frac{1}{\delta^{\frac{d}{p}-k}R^{k}}+\frac{R^{d-k}e^{-R^{2}}}{\delta^{d}}+\frac{1}{R^{l}\left\vert x \right\vert^{k}}. \label{prefinal step1}   
\end{align}
We can pick $\lambda'\in(0,1)$ small enough so that $2k-\frac{d}{p}-\lambda'>0>k-\frac{d}{p}-\lambda'$ assuming that $\tfrac{d}{2k}<p<\tfrac{d}{k}$. We observe that 
\[
\frac{1}{\delta^{\frac{d}{p}-k}R^{k}}=\delta^{\lambda'},\ \delta R=\delta^{2-\tfrac{d}{kp}-\tfrac{\lambda'}{k}} \quad \mbox{by setting } R=R(\delta)=\delta^{1-\tfrac{d}{kp}-\tfrac{\lambda'}{k}}.
\]
Notice that there is some $\beta'>0$ such that the function $\delta \mapsto \frac{R^{d-k}(\delta)e^{-R^{2}(\delta)}}{\delta^{d}}$ satisfies 
$\frac{R^{d-k}(\delta)e^{-R^{2}(\delta)}}{\delta^{d}}\leq \delta^{\beta'}$ when $\delta\to 0$, since both $\frac{R^{d-k}(\delta)}{\delta^{d}}$ and $R^{2}(\delta)$ are negative powers of $\delta$ under our assumptions.
Then, owing to (\ref{prefinal step1}) we get  
\[
\left|K_{\delta}^{1}\star V(x)-V(x)\right|\lesssim \frac{\delta^{\alpha}}{\left|x\right|^{k+1}}+\delta^{\beta}+\frac{\delta^{\gamma}}{\left\vert x \right\vert^{k}},
\]
for some $\alpha>0, \beta>0, \gamma>0$, which implies that for a suitable choice of $\lambda>0$ it holds that 
\begin{equation*}
K_{\delta}^{1}\star V(x)\leq V(x)+C\delta^{\lambda},\ \left|x\right|\geq \delta^{\lambda},
\end{equation*}
as desired. Notice that we can assume that $\lambda <1$  without loss of generality since $\delta$ will be chosen converging to zero. 

\vskip 12pt

\textbf{Step 2}.  In this step, we prove that 
\begin{align}
K_{\delta}^{1}\star V(x)\leq C\left(V(x)+\delta^{\lambda}\right),\ \mbox{for all } x\in\mathbb{R}^{d}. \label{step 2 main ine}     
\end{align}
Note that in \eqref{step 2 main ine} we allow the estimate to hold  up to a constant (unlike inequality \eqref{Eq 2Rdel}). Assume first that $\left\vert x \right\vert\leq 2R\delta$. The following inequality
follows by a direct calculation 
\begin{equation}
\int_{B(0,\delta)}V(y)dy\lesssim \int_{\frac{\delta}{2}\leq\left|y\right|\leq\delta}V(y)dy,  \label{int on annulus}   
\end{equation}
for $0<k<d-1$.
We now split $K^{1}_{\delta}\star V$ as 
\begin{align*}
 K^{1}_{\delta}\star V(x)=\int_{\mathbb{R}^{d}}\chi\left(\frac{x-z}{\delta}\right)K^{1}_\delta\left(x-z\right)V(z)dz+\int_{\mathbb{R}^{d}}\left(1-\chi\left(\frac{x-z}{\delta}\right)\right)K^{1}_\delta\left(x-z\right)V(z)dz.   
\end{align*}
The first term can be estimated as
\begin{align*}
\int_{\mathbb{R}^{d}}\chi\left(\frac{x-z}{\delta}\right)K^{1}_\delta\left(x-z\right)V(z)dz\leq\int_{\left|z\right|\leq3R\delta}\left\vert K^{1}_\delta\right\vert\left(x-z\right)V(z)dz
=\int_{A\cup B}\left\vert K^{1}_\delta\right\vert\left(x-z\right)V(z)dz,
\end{align*}
where $A=\{z\in\mathbb{R}^{d}: \left|z\right|\leq3R\delta,\left|z\right|\leq\frac{1}{2}\left|x\right|\}$ and $B=\{z\in\mathbb{R}^{d}: \frac{1}{2}\left|x\right|\leq\left|z\right|\leq3R\delta\}$.
If $\left|z\right|\leq\frac{1}{2}\left|x\right|$
then $\left|x-z\right|\geq\frac{1}{2}\left|x\right|$ so that 
\begin{align*}
\int_{A} \left\vert K^{1}_\delta\right\vert\left(x-z\right)V(z)dz&\leq\frac{\exp\left(-\frac{\left|x\right|^{2}}{2\delta^{2}}\right)}{\delta^{d}}\int_{A}V(z)dz \leq\frac{\exp(-\frac{\left|x\right|^{2}}{2\delta^{2}})}{\delta^{d}}\int_{\left|z\right|\leq\frac{1}{2}\left|x\right|}V(z)dz\\
&\lesssim \frac{\exp(-\frac{\left|x\right|^{2}}{2\delta^{2}})}{\delta^{d}}\int_{\frac{1}{4}\left|x\right|\leq\left|z\right|\leq\frac{1}{2}\left|x\right|}V(z)dz \lesssim \exp\left(-\frac{\left|x\right|^{2}}{2\delta^{2}}\right)\frac{\left|x\right|^{d}}{\delta^{d}}V(x),
\end{align*}
where we used \eqref{int on annulus}. 
Note that the function $x\mapsto\exp(-\frac{\left|x\right|^{2}}{2\delta^{2}})\frac{\left|x\right|^{d}}{\delta^{d}}$
is uniformly bounded in $\delta$. Therefore we conclude that 
\begin{equation*}
\int_{A}\left\vert K^{1}_\delta\right\vert\left(x-z\right)V(z)dz\lesssim V(x).
\end{equation*}
In addition, we obtain
\[
\int_{B}\left\vert K^{1}_\delta\right\vert\left(x-z\right)V(z)dz\lesssim V(x)\int_{\left|z\right|\leq3R\delta}\left\vert K^{1}_\delta\right\vert\left(x-z\right)dz\lesssim V(x).
\]
Thus, we have proved that 
\begin{equation}
\int_{\mathbb{R}^{d}}\chi\left(\frac{x-z}{\delta}\right)\left\vert K^{1}_\delta\right\vert\left(x-z\right)V(z)dz\lesssim V(x).\label{eq:-37}
\end{equation}
Next we handle the truncation far from the origin. By exactly the same argument leading to \eqref{ine''},\eqref{eq:-36} and \eqref{eq:-27} we
have 
\begin{align}
\int_{\mathbb{R}^{d}}\left(1-\chi\left(\frac{x-z}{\delta}\right)\right)K^{1}_\delta\left(x-z\right)V(z)dz &=\int_{\mathbb{R}^{d}}(1-\chi(z))K^{1}(z)V(x-\delta z)dz \notag\\
&\lesssim V(x)+\delta^{\lambda}+\delta^{\alpha}.\label{eq:-38}    
\end{align}
In view of (\ref{eq:-37})
and (\ref{eq:-38}), we deduce $K_{\delta}^{1}\star V(x)\lesssim V(x)$ for $\left\vert x \right\vert\leq 2R\delta$.
We are left to treat the range $2R\delta\leq \left\vert x\right\vert\leq \delta^{\lambda}$.  If $\left|x\right|\geq2R\delta$
and $\left|z\right|\leq R$ then clearly 
\[
\frac{\left|x\right|}{2}=\left|x\right|-\frac{\left|x\right|}{2}\leq\left|\left|x\right|-R\delta\right|\leq\left|x-\delta z\right|\leq2\left|x\right|.
\]
Thus, we infer that
\begin{equation*}
\int_{\mathbb{R}^{d}}\chi(z)\left\vert K^{1}\right\vert(z)V(x-\delta z)dz\lesssim V(\tfrac{x}{2})\int_{\left|z\right|\leq R}\left\vert K^{1}\right\vert(z)dz\lesssim V(x).
\end{equation*}
In addition, by step 2, we have 
\begin{align*}
 \left\vert\int_{\mathbb{R}^{d}}(1-\chi(z))K^{1}(z)V(x-\delta z)dz\right\vert\lesssim V(x)+\delta^{\lambda}+\delta^{\alpha},    
\end{align*}
for $\left\vert x \right\vert\geq 2R\delta$.
Together with inequality \eqref{Eq 2Rdel} in step 1, this establishes \eqref{step 2 main ine} by redefining $\lambda$ if needed since $\lambda <1$ and $\delta$ will be chosen converging to zero.

\vskip 12pt

\textbf{Step 3}. We claim that there is a non-decreasing function $f$ such that 
\begin{align}
K^{1}_{\delta}\star V(x) \leq V(x), \ \left\vert x \right\vert\leq f(\delta). \label{x less than f}   
\end{align} 
In order to prove the claim, we first estimate it as
\begin{align*}
 K^{1}_{\delta}\star V(x)\leq \int_{\left\vert z\right\vert\leq 1}V(x-\delta z)\left\vert K^{1} \right\vert(z)dz+\int_{\left\vert z\right\vert\geq 1}V(x-\delta z)\left\vert K^{1} \right\vert(z)dz\coloneqq I+J.
 \end{align*}
 By change of variables, we have 
 \begin{align*}
I\leq \frac{1}{\delta^{d}}\int_{B_{\delta }
(x)} V(z)dz,    
 \end{align*}
and since the function $x \mapsto \int_{B_{\delta}(x)}V(z)dz$ attains its maximum at $x=0$\footnote{ For each $x,y$ with $\left\vert x\right\vert< \left\vert y\right\vert$  it holds that $\left\vert x-z\right\vert\leq \left\vert y-z\right\vert $ for all $z\in B_{\delta}(0)$ and $\delta>0$ sufficiently small. Consequently, given a radially decreasing function $V(z)=v(\left\vert z\right\vert)$ for each $\left\vert x\right\vert< \left\vert y\right\vert $ and $\delta>0$ small enough it holds that 
\begin{align*}
\int_{B_{\delta}(x)}V(z)dz-\int_{B_{\delta}(y)}V(z)dz=\int_{B_{\delta}(0)}\left(v(\left\vert x-z\right\vert)-v(\left\vert y-z\right\vert)\right)dz\geq 0.         
\end{align*}}, it follows that 
\begin{align*}
I\leq \frac{1}{\delta^{d}}\int_{B_{\delta }(0)}V(z)dz= \frac{1}{(d-k)\delta^{k}}.
\end{align*}
Hence, for all $\left\vert x \right\vert\leq \left(\tfrac{d-k}{2}\right)^{\frac{1}{k}}\delta$, we get
\begin{align*}
 I\leq \frac{1}{2\left\vert x\right\vert^{k}}=\frac{1}{2}V(x).       
\end{align*}
In addition, we obtain
\begin{align*}
J\leq \frac{1}{( \delta-\left\vert x\right\vert)^{k}}\leq \frac{1}{2\left\vert x \right\vert^{k}},   
\end{align*}
for all $\left\vert x \right\vert\leq \frac{\delta}{1+2^{\frac{1}{k}}}$,
so that $J\leq \tfrac{1}{2}V(x)$.

To conclude, the claim follows by taking $f(\delta)=\min\left\{\frac{1}{1+2^{\frac{1}{k}}},\left(\frac{d-k}{2}\right)^{\frac{1}{k}}\right\}\delta$. 

\vskip 12pt

\textbf{Step 4}. \textbf{ Construction of} $V_{\varepsilon}$. In this step we will construct $V_{\varepsilon}$ and then show that it satisfies the requested properties i.-iv. We construct our regularized kernel following the same strategy as in \cite[Lemma 4.1]{bresch2019modulated}.  

We now need to interpolate between this inequality near the origin and the far-field inequality \eqref{Eq 2Rdel} by means of the inequality \eqref{step 2 main ine}. This is crucially needed in order to keep the constant 1 in front of $V(x)$ on the right-hand side of our statement in point iv. As mentioned the construction is motivated by \cite[Lemma 4.1]{bresch2019modulated} proved in the torus.

Given $\varepsilon>0$ pick $0<\delta_{1}\leq \frac{\varepsilon^{\frac{2}{\lambda}}}{C}$, where $C>0$ is the constant in \eqref{step 2 main ine}, and recursively pick $0<\delta_{i+1}$ such that 
\[
\delta_{{i+1}}\leq\min\left\{ f(\delta_{{i}}),\delta_{{i}}^{\frac{1}{\lambda}}\right\} .
\]
Let $M\coloneqq\left\lfloor \frac{1}{\varepsilon}\right\rfloor $ where $\lfloor x\rfloor$ denotes the closest lower integer to $x$,
and set 
\begin{equation}\label{aux55}
W_{\varepsilon}\coloneqq\frac{1}{M}\stackrel[i=1]{M}{\sum}K^{1}_{\delta_{{i}}}\star V.
\end{equation}
Obviously $\delta_{{i}}$ is decreasing since $\lambda<1$ and  $\underset{1\leq i\leq M}{\max}\delta_{{i}}\underset{\varepsilon\rightarrow0}{\rightarrow}0$ by construction. 

If $\left|x\right|\geq\delta_{1}^\lambda$, \eqref{Eq 2Rdel} implies that 
\begin{align*}
W_{\varepsilon}(x)\leq V(x)+\varepsilon.     
\end{align*}
\indent If $\left|x\right|\leq f(\delta_{{M}})$, \eqref{x less than f} implies that $W_{\varepsilon}(x)\leq V(x)$. 

If $f(\delta_{{M}})\leq\left|x\right|\leq\delta_{1}^{\lambda}$, fix $1\leq i\leq M$ such that $\delta_{{i+1}}\leq\left|x\right|\leq\delta_{{i}}$, then:
\begin{itemize}
    \item If $j>i+1$ then $\left|x\right|\geq\delta_{{i+1}}\geq\delta_{{j}}^{\lambda}$
so that using again \eqref{Eq 2Rdel} we get 
$
K_{\delta_{{j}}}^{1}\star V(x)\leq  V(x)+\varepsilon.
$
\item If $j<i$ then $\left|x\right|\leq\delta_{{i}}\leq f(\delta_{{j}})$
so that again by \eqref{x less than f} we have 
$
K_{\delta_{{j}}}^{1}\star V(x)\leq V(x).
$

\item If $j=i$ or $j=i+1$ then according to  \eqref{step 2 main ine}
$
K_{\delta_{{j}}}^{1}\star V(x)\leq C\left(V(x)+\varepsilon \right).
$
\end{itemize}
We finally define 
\begin{equation}\label{aux56}
V_{\varepsilon}(x)\coloneqq\frac{1}{1+2C\varepsilon}W_{\varepsilon}(x)
\end{equation} 
and conclude that 
$V_{\varepsilon}(x)\leq  V(x)+\varepsilon$,
which establishes point iv. Notice that the definition of $M$ is crucially used for this estimate.

That $\widehat{V_{\varepsilon}}\geq0$ is immediate by construction by the choice of $K^1$ and establishes point i. We are left to prove point ii,iii and v. If we split $V=V_1+V_2$ where $V_1\in L^{p}(\mathbb{R}^{d})$ and $V_2\in L^{q}(\mathbb{R}^{d})$ then 
\begin{align*}
\left\Vert W_{\varepsilon}-V\right\Vert _{L^{p}+L^{q}(\mathbb{R}^{d})}&\leq\underset{1\leq i\leq M}{\max}\left\Vert K_{\delta_{{i}}}\star V_{1}-V_{1}\right\Vert _{L^{p}(\mathbb{R}^{d})}+\underset{1\leq i\leq M}{\max}\left\Vert K_{\delta_{{i}}}\star V_{2}-V_{2}\right\Vert _{L^{q}(\mathbb{R}^{d})}\\
&\leq2\underset{1\leq i\leq M}{\max}\delta_{{i}}\underset{\varepsilon\rightarrow0}{\longrightarrow}0,
\end{align*}
which is assertion ii. 
Note further that for a given $\delta>0$ one has 
\begin{align*}
\left\Vert \zeta_{\delta}(K^{1}_{\delta_{{i}}}\star V)-\zeta_{\delta}V\right\Vert _{L^{p}+L^{q}(\mathbb{R}^{d})}\leq&\left\Vert K^{1}_{\delta_{{i}}}\star(\zeta_{\delta}V)-\zeta_{\delta}V\right\Vert _{L^{p}+L^{q}(\mathbb{R}^{d})}+\left\Vert \zeta_{\delta}(K^{1}_{\delta_{{i}}}\star((1-\zeta_{\delta})V))\right\Vert _{L^{p}+L^{q}(\mathbb{R}^{d})}\\&+\left\Vert (\zeta_{\delta}-1)(K^{1}_{\delta_{{i}}}\star(\zeta_{\delta}V))\right\Vert _{L^{p}+L^{q}(\mathbb{R}^{d})}\\\coloneqq& I+II+III.    
\end{align*}
It is not difficult to check that $\left\Vert K^{1}_{\delta}\star f-f\right\Vert_{q}\lesssim_{d}\delta\left\Vert  \nabla f\right\Vert_{q}$ by using that the first moment of $K^{1}_{\delta}$ is of order $\delta$. Therefore, the first term is 
\begin{align*}
I\leq\left\Vert K^{1}_{\delta_{i}}\star\zeta_{\delta}V-\zeta_{\delta}V\right\Vert _{L^{q}(\mathbb{R}^{d})}\lesssim\underset{1\leq i\leq M}{\max}\delta_{i}\left\Vert \nabla(\zeta_{\delta}V)\right\Vert _{q}\leq\underset{1\leq i\leq M}{\max}\delta_{i}\left(\left\Vert \zeta_{\delta}\nabla V\right\Vert _{q}+\left\Vert V\nabla\zeta_{\delta}\right\Vert _{q}\right).    
\end{align*}
Observe that 
\begin{align*}
\left\Vert \zeta_{\delta}\nabla V\right\Vert _{q}\lesssim \left(\int_{\left|\cdot\right|\geq\delta}\frac{1}{\left|x\right|^{q(k+1)}}dx\right)^{\frac{1}{q}}&=\left(\int_{\delta}^{\infty}r^{d-1-q(k+1)}dr\right)^{\frac{1}{q}}\lesssim \frac{\delta^{\frac{d}{q}}}{\delta^{k+1}}.
\end{align*}
Also, a similar calculation reveals that 
\begin{equation}\label{comphomog}
    \left\Vert V\nabla\zeta_{\delta}\right\Vert _{q}\leq\left\Vert \nabla\zeta_{\delta}\right\Vert _{\infty}\left\Vert V\right\Vert _{L^{q}(B_{2\delta}\setminus B_{\delta})}\lesssim\left\Vert \nabla\zeta_{\delta}\right\Vert _{\infty}\frac{\delta^{\frac{d}{q}}}{\delta^{k}}\leq\frac{\delta^{\frac{d}{q}}}{\delta^{k+1}},
\end{equation}
hence $I\lesssim {\varepsilon}{\delta^{\frac{d}{q}-1-k}}$.
Moreover 
\begin{align*}
II=\left\Vert \zeta_{\delta}(K^{1}_{\delta_{i}}\star((1-\zeta_{\delta})V))\right\Vert _{L^{p}+L^{q}(\mathbb{R}^{d})}&\leq\left\Vert K^{1}_{\delta_{i}}\star((1-\zeta_{\delta})V)\right\Vert _{L^{p}(\mathbb{R}^{d})}\\
&\leq\left\Vert K^{1}_{\delta_{i}}\right\Vert _{L^{1}(\mathbb{R}^{d})}\left\Vert (1-\zeta_{\delta})V\right\Vert _{L^p(\mathbb{R}^{d})}\lesssim \delta^{\frac{d}{p}-k}.    
\end{align*}
As for $III$, we estimate 
\begin{align*}
    III\leq \left\Vert (\zeta_{\delta}-1)(K^{1}_{\delta_{i}}\star\zeta_{\delta}V)\right\Vert_{L^{p}(\mathbb{R}^{d})}&\leq   \left\Vert \zeta_{\delta}-1\right\Vert_{L^{p}(\mathbb{R}^{d})} \left\Vert K^{1}_{\delta_{i}}\star\zeta_{\delta}V\right\Vert_{L^{\infty}(\mathbb{R}^{d})}\\
    &\lesssim \delta^{\frac{d}{p}}\left\Vert \zeta_{\delta}V\right\Vert_{L^{\infty}(\mathbb{R}^{d})}\leq  \delta^{\frac{d}{p}-k}.
\end{align*}
Therefore, we have proved 
\begin{align*}
 \left\Vert \zeta_{\delta}K_{\delta_{n_{i}}}\star V-\zeta_{\delta}V\right\Vert _{L^{p}+L^{q}(\mathbb{R}^{d})}\lesssim\frac{\varepsilon}{\delta^{k+1-\frac{d}{q}}}+\delta^{\frac{d}{p}-k},  
\end{align*}
which concludes the proof of item iii. We finally have 
\begin{align*}
\left\Vert K^{1}_{\delta}\star V\right\Vert_{\infty}\leq \left\Vert K^{1}_{\delta}\right\Vert_{p'} \left\Vert \mathbf{1}_{\left\vert \cdot\right\vert\leq 1}V\right\Vert_{p}+\left\Vert K^{1}_{\delta}\right\Vert_{q'} \left\Vert \mathbf{1}_{\left\vert \cdot\right\vert\geq 1}V\right\Vert_{q}   \leq \delta^{-\frac{d}{p}}+ \delta^{-\frac{d}{q}}
\end{align*}
so that by defining
\begin{align*}
\left\Vert V_{\varepsilon}\right\Vert_{\infty}\leq \left(\min_{1\leq i\leq M} \delta_{i}\right)^{-\frac{d}{p}}+ \left(\min_{1\leq i\leq M} \delta_{i}\right)^{-\frac{d}{q}}:=\frac{1}{\Theta(\varepsilon)},
\end{align*}
the statement v. follows.
\qed

\

In the following Lemma we show that given a function in Fourier variable $s(\xi)$ satisfying some growth assumptions we can bound  $s\star\widehat{V_{\varepsilon}}$  pointwise in absolute value by means of $\widehat{V_{\varepsilon}}$, where $V_{\varepsilon}$ is the approximating kernel constructed in Lemma \ref{Regularization }.

\begin{lem}
\label{bound on convolution} Let $V_{\varepsilon}$ be as in Lemma \ref{Regularization }. Assume that there are some $S_{0}>0$ and some $\sigma>2d$ such that 
\begin{align}\label{aux3}
  \left\vert s(\xi)\right\vert\leq \frac{S_{0}}{(1+\left\vert \xi \right\vert)^{\sigma}} .
\end{align}
Given $r>\frac{d}{d-k}$ set $S_{0}'\coloneqq \max\left\{\left\Vert s\right\Vert_{1},\left\Vert \left\vert \cdot\right\vert^{d-k}s\right\Vert_{r'},\left\Vert (1+\left\vert \cdot \right\vert^{d-k})s\right\Vert_{L^{\infty}}\right\}$.
Then, it holds that 
\[
\left\vert (s\star\widehat{V_{\varepsilon}})(\xi)\right\vert\leq C_{1}\widehat{V_{\varepsilon}}(\xi)+C_{2}\frac{o_{\varepsilon}(1)}{(1+\left\vert \xi \right\vert)^{\frac{\sigma}{2}}},
\]
where $C_{1}=c_{1}(k,d)S_{0}'$ and 
$C_{2}=c_{2}(k,\sigma,d)S_{0}$. 
\end{lem}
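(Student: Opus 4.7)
The plan is to split $(s\star\widehat{V_{\varepsilon}})(\xi)$ into a resonant piece on $|\eta|\sim|\xi|$ and a tail, and to identify these with the two terms in the statement. Concretely, I write $(s\star\widehat{V_{\varepsilon}})(\xi)=I_{A}+I_{B}+I_{C}$ with
$$A=\{|\eta|\leq|\xi|/2\},\quad B=\{|\xi|/2<|\eta|<2|\xi|\},\quad C=\{|\eta|\geq 2|\xi|\};$$
$I_{B}$ will feed into the $C_{1}\widehat{V_{\varepsilon}}(\xi)$ term, while $I_{A}+I_{C}$, after a further dyadic split in $|\xi|$, will feed into the $C_{2}o_{\varepsilon}(1)(1+|\xi|)^{-\sigma/2}$ term.

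On $B$, the homogeneity $\widehat{V}(\eta)=\mathbf{C}|\eta|^{-(d-k)}$ gives $\widehat{V}(\eta)\leq 2^{d-k}\widehat{V}(\xi)$, and the two-sided bound $c(1+|\zeta|^{r})^{-1}\leq\widehat{K^{1}}(\zeta)\leq C(1+|\zeta|^{r})^{-1}$ built into the construction of $V_{\varepsilon}$ in Lemma~\ref{Regularization } forces $\widehat{K^{1}}(\delta_{i}\eta)\leq C(r)\widehat{K^{1}}(\delta_{i}\xi)$ uniformly in $i$ on $B$. Averaging over $i$ yields $\widehat{V_{\varepsilon}}(\eta)\leq C(k,d)\widehat{V_{\varepsilon}}(\xi)$ on $B$, and pulling $\widehat{V_{\varepsilon}}(\xi)$ outside the integral produces
$$|I_{B}|\leq C(k,d)\widehat{V_{\varepsilon}}(\xi)\|s\|_{1}\leq c_{1}(k,d)\,S_{0}'\,\widehat{V_{\varepsilon}}(\xi).$$

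On $A\cup C$, one has $|\xi-\eta|\geq|\xi|/2$ in $A$ and $|\xi-\eta|\geq|\eta|/2\geq|\xi|$ in $C$. Using $|s(\zeta)|\leq S_{0}(1+|\zeta|)^{-\sigma}$ and the crude bound $\widehat{V_{\varepsilon}}(\eta)\leq\mathbf{C}|\eta|^{-(d-k)}$, a direct computation of the resulting power-function integrals gives $|I_{A}|+|I_{C}|\leq C(k,\sigma,d)\,S_{0}\,(1+|\xi|)^{-(\sigma-k)}$. To match this with the two terms in the statement, I split on whether $|\xi|\leq 1/\delta_{1}$ or $|\xi|>1/\delta_{1}$, where $\delta_{1}\lesssim\varepsilon^{2/\lambda}\to 0$ is the largest mollification scale in the construction of $V_{\varepsilon}$. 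On $\{|\xi|\leq 1/\delta_{1}\}$, every $\widehat{K^{1}}(\delta_{i}\xi)\gtrsim 1$ so $\widehat{V_{\varepsilon}}(\xi)\gtrsim\widehat{V}(\xi)$, and combined with $\sigma\geq d$ this gives $(1+|\xi|)^{-(\sigma-k)}\leq C\widehat{V_{\varepsilon}}(\xi)$. On $\{|\xi|>1/\delta_{1}\}$ I factor
$$(1+|\xi|)^{-(\sigma-k)}=(1+|\xi|)^{-\sigma/2}(1+|\xi|)^{-(\sigma/2-k)}\leq\delta_{1}^{\sigma/2-k}(1+|\xi|)^{-\sigma/2},$$
which is valid because $\sigma>2d>2k$ makes $\sigma/2-k>0$. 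Setting $o_{\varepsilon}(1):=\delta_{1}^{\sigma/2-k}\to 0$ delivers the second term of the statement with $C_{2}=c_{2}(k,\sigma,d)\,S_{0}$.

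The main obstacle is the uniform-in-$\varepsilon$ comparability $\widehat{V_{\varepsilon}}(\eta)\leq C(k,d)\widehat{V_{\varepsilon}}(\xi)$ on $B$: this requires using \emph{both} inequalities in the two-sided bound on $\widehat{K^{1}}$ so that each ratio $\widehat{K^{1}}(\delta_{i}\eta)/\widehat{K^{1}}(\delta_{i}\xi)$ is controlled by a constant depending only on $r$, independent of the specific $\delta_{i}$'s produced by the multi-scale construction of $V_{\varepsilon}$. Once this is in hand, the remaining work — the power-function integrations on $A\cup C$ and the dyadic split at the scale $1/\delta_{1}$ — is routine book-keeping using $\sigma>2d$.
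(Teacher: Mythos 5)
Your proof is correct and rests on the same two ingredients as the paper's: the two-sided bound on $\widehat{K^{1}}$ producing uniform-in-$\varepsilon$ comparability $\widehat{V_{\varepsilon}}(\eta)\lesssim\widehat{V_{\varepsilon}}(\xi)$ at comparable frequencies, and the $\sigma$-decay of $s$ to push the remainder into an $o_{\varepsilon}(1)$ tail. The paper, however, organizes these in the opposite order: it first splits $\xi$-space at $1/\overline{\delta}$ (with $\overline{\delta}=\delta_{1}$), bounding the whole convolution on $\{|\xi|<1/\overline{\delta}\}$ by $\widehat{V}(\xi)\lesssim\widehat{V_{\varepsilon}}(\xi)$ using a fixed bump function and only the H\"older-type norms entering $S_{0}'$ (never invoking the decay hypothesis), and then on $\{|\xi|\geq1/\overline{\delta}\}$ splits by $|\zeta|\lessgtr|\xi|/2$, exploiting that every $\zeta$ with $|\zeta|\geq|\xi|/2\geq 1/(2\overline\delta)$ already lies in the vanishing tail of $s$. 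Your route --- partitioning by $|\eta|/|\xi|$ first and only afterwards by $|\xi|\lessgtr 1/\delta_{1}$ --- gives the same inequality, but it forces you to absorb $I_{A}+I_{C}$ into the $\widehat{V_{\varepsilon}}(\xi)$-term when $|\xi|\leq 1/\delta_{1}$ through the chain $|I_{A}|+|I_{C}|\lesssim_{k,\sigma,d}S_{0}(1+|\xi|)^{-(\sigma-k)}\lesssim\widehat{V_{\varepsilon}}(\xi)$; this places a $\sigma$-dependent, $S_{0}$-proportional contribution into the coefficient of $\widehat{V_{\varepsilon}}(\xi)$, whereas the paper's ordering keeps $C_{1}=c_{1}(k,d)S_{0}'$ with the $\sigma$-dependence confined entirely to $C_{2}$. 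This discrepancy is immaterial for how the lemma is invoked later in the paper, but it explains why the statement distinguishes $S_{0}$ from $S_{0}'$ and isolates $\sigma$ in $C_{2}$ --- a constant structure your argument does not exactly reproduce.
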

\begin{proof}
We separate between two regions. Put $\overline{\delta}\coloneqq\underset{1\leq i\leq M}{\max}\delta_{i}\leq \varepsilon$.

\textbf{Step 1}. Suppose that $\left|\xi\right|<\frac{1}{\overline{\delta}}$.
Note that for all $\xi \in \mathbb{R}^d$
\begin{equation}
\widehat{V_{\varepsilon}}(\xi)=\frac{1}{M(1+2C\varepsilon)}\stackrel[i=1]{M}{\sum}\widehat{K^{1}_{\delta_{{i}}}}(\xi)\widehat{V}(\xi)\leq\left\Vert \widehat{K^{1}}\right\Vert _{\infty}\widehat{V}(\xi).\label{eq:-33}
\end{equation}
Fix some $\chi\in C_{0}^{\infty}(\mathbb{R}^{d})$ with $0\leq \chi \leq 1$ and  $\chi\equiv1$
on $B_{1}(0)$ and $\chi\equiv0$ on $\mathbb{R}^{d}\setminus B_{2}(0)$.
In view of  \eqref{eq:-33} we have 
\begin{align}
&\left\vert \int_{\mathbb{R}^{d}}\widehat{V_{\varepsilon}}(\xi-\zeta)s(\zeta)d\zeta\right\vert \notag \\
&\leq\left\Vert \widehat{K^{1}}\right\Vert _{\infty}\int_{\mathbb{R}^{d}}\widehat{V}(\xi-\zeta)\left\vert s (\zeta)\right\vert d\zeta   \notag\\&=\left\Vert \widehat{K^{1}}\right\Vert _{\infty}\left(\int_{\mathbb{R}^{d}}\chi(\xi-\zeta)\widehat{V}(\xi-\zeta)\left\vert s(\zeta)\right\vert d\zeta 
+\int_{\mathbb{R}^{d}}(1-\chi(\xi-\zeta))\widehat{V}(\xi-\zeta)\left\vert s (\zeta)\right\vert d\zeta\right). \label{eq:-32}
\end{align}
We control each of the integrals in \eqref{eq:-32} separately.  Thanks to \eqref{aux3} one has 
\begin{align*}
\left|\xi\right|^{d-k}\int_{\mathbb{R}^{d}}\chi(\zeta)\widehat{V}(\zeta)\left\vert s (\xi-\zeta)\right\vert d\zeta\lesssim& \int_{\mathbb{R}^{d}}\chi(\zeta)\widehat{V}(\zeta)\left|\xi-\zeta\right|^{d-k}\left\vert s (\xi-\zeta)\right\vert d\zeta\\
&+\int_{\mathbb{R}^{d}}\chi(\zeta)\widehat{V}(\zeta)\left|\zeta\right|^{d-k}\left\vert s (\xi-\zeta)\right\vert d\zeta\\
\lesssim&\, \left\Vert (1+\left\vert \cdot \right\vert^{d-k})s\right\Vert_{\infty}\int_{\mathbb{R}^{d}}(1+\left|\zeta\right|^{d-k})\chi(\zeta)\widehat{V}(\zeta)d\zeta\\
\lesssim& \left\Vert (1+\left\vert \cdot \right\vert^{d-k})s\right\Vert_{\infty}\left\Vert \left(1+\left|\cdot\right|^{d-k}\right)\widehat{V}(\cdot)\right\Vert _{L^{1}(B_{2}(0))}\lesssim \left\Vert (1+\left\vert \cdot \right\vert^{d-k})s\right\Vert_{\infty},
\end{align*}
which shows that 
\begin{equation}
\int_{\mathbb{R}^{d}}\chi(\xi-\zeta)\widehat{V}(\xi-\zeta)\left\vert s(\zeta)\right\vert d\zeta\lesssim\frac{\left\Vert (1+\left\vert \cdot \right\vert^{d-k})s\right\Vert_{\infty}}{\left|\xi\right|^{d-k}}\lesssim \left\Vert (1+\left\vert \cdot \right\vert^{d-k})s\right\Vert_{\infty}\widehat{V}(\xi).\label{eq:-34}
\end{equation}
Secondly, we can estimate the truncation far from origin as follows 
\begin{align}
\left|\xi\right|^{d-k}\int_{\mathbb{R}^{d}}(1-\chi(\xi-\zeta))\widehat{V}(\xi-\zeta)\left\vert s(\zeta)\right\vert d\zeta\lesssim&\int_{\mathbb{R}^{d}}\left|\xi-\zeta\right|^{d-k}(1-\chi(\xi-\zeta))\widehat{V}(\xi-\zeta)\left\vert s(\zeta)\right\vert d\zeta \notag\\
&+\int_{\mathbb{R}^{d}}\left|\zeta\right|^{d-k}(1-\chi(\xi-\zeta))\widehat{V}(\xi-\zeta)\left\vert s(\zeta)\right\vert d\zeta \notag\\
\lesssim&\int_{\mathbb{R}^{d}}\left\vert s(\zeta)\right\vert d\zeta+\int_{\mathbb{R}^{d}}\left|\xi-\zeta\right|^{d-k}\left\vert s(\xi-\zeta)\right\vert (1-\chi(\zeta))\widehat{V}(\zeta)d\zeta
\notag\\
\lesssim& 
\left\Vert s\right\Vert_{1}+\left\Vert (1-\chi)\widehat{V}\right\Vert_{r} \left\Vert \left\vert \cdot\right\vert^{d-k}s\right\Vert_{r'}. \label{eq:-35}
\end{align}
Choosing $r>\frac{d}{d-k}$ ensures that $\left\Vert (1-\chi)\widehat{V}\right\Vert_{r}<\infty$. Therefore we conclude from  \eqref{eq:-34}, \eqref{eq:-35}
that 
\begin{align*}
\int_{\mathbb{R}^{d}}\widehat{V_{\varepsilon}}(\xi-\zeta)\left\vert s \right\vert(\zeta)d\zeta\lesssim S'_{0}\widehat{V}(\xi). 
\end{align*}
Now, utilizing the assumption that  $\left|\xi\right|<\frac{1}{\overline{\delta}}$ we see that $\widehat{K^{1}_{\delta_{{i}}}}(\xi)\gtrsim 1 $ for all $1\leq i \leq M$, so that we find 
$\widehat{V}(\xi)\lesssim \widehat{V}_{\varepsilon}(\xi)$,
for $\left|\xi\right|<\frac{1}{\overline{\delta}}$, and thus we get
\begin{align} 
\left\vert s\star\widehat{V_{\varepsilon}}(\xi) \right\vert\lesssim S'_{0}\widehat{V_{\varepsilon}}(\xi), \mbox{ for } \left|\xi\right|<\frac{1}{\overline{\delta}}. \label{xi less than 1/delta}   
\end{align}
\textbf{Step 2}. Suppose now $\left|\xi\right|\geq\frac{1}{\overline{\delta}}$. 
In this case we split the integral as follows 
\begin{align*}
\int_{\mathbb{R}^{d}}\widehat{V}_{\varepsilon}(\xi-\zeta)s(\zeta)d\zeta 
= \int_{\left\vert \zeta \right\vert\geq \frac{\left\vert \xi \right\vert}{2}}\widehat{V}_{\varepsilon}(\xi-\zeta)s(\zeta)d\zeta+\int_{\left\vert \zeta \right\vert < \frac{\left\vert \xi \right\vert}{2}}\widehat{V}_{\varepsilon}(\xi-\zeta)s(\zeta)d\zeta\coloneqq I(\xi)+J(\xi).  
\end{align*}
Pick a function $0\leq \chi \leq 1$ such that $\chi\equiv1$ on $\left|\zeta\right|\leq\frac{1}{4\overline{\delta}}$
and $\chi\equiv0$ on $\left|\zeta\right|\geq\frac{1}{2\overline{\delta}}$.
We start with $I(\xi)$. We can write 
\begin{align*}
I(\xi)&=\int_{\left\vert \zeta \right\vert \geq \frac{\left\vert \xi \right\vert}{2} }\widehat{V_{\varepsilon}}(\xi-\zeta)(1-\chi(\zeta))s(\zeta)d\zeta 
\\&=\int_{\left\vert \zeta \right\vert \geq \frac{\left\vert \xi \right\vert}{2} }\mathbf{1}_{\left\vert \cdot \right\vert\leq 1}(\xi-\zeta)\widehat{V_{\varepsilon}}(\xi-\zeta)(1-\chi(\zeta))s(\zeta)d\zeta+\int_{\left\vert \zeta \right\vert \geq \frac{\left\vert \xi \right\vert}{2} }\mathbf{1}_{\left\vert \cdot \right\vert\geq 1}(\xi-\zeta)\widehat{V_{\varepsilon}}(\xi-\zeta)(1-\chi(\zeta))s(\zeta)d\zeta \\ &\coloneqq I_{1}(\xi)+I_{2}(\xi). 
\end{align*}
First, we have 
\begin{align}
\left\vert I_{2}(\xi)\right\vert \lesssim \int_{\left\vert \zeta \right\vert \geq \frac{\left\vert \xi \right\vert}{2}} \left\vert (1-\chi(\zeta))s(\zeta)\right\vert d\zeta \lesssim  \frac{S_{0}}{(1+\left\vert \xi \right\vert)^{\frac{\sigma}{2}}}\int_{\left\vert \zeta \right\vert \geq \frac{1}{2\overline{\delta}}}\frac{1}{(1+\left\vert \zeta \right\vert)^{\frac{\sigma}{2}}} d\zeta\lesssim \frac{S_{0}o_{\varepsilon}(1)}{(1+\left\vert \xi \right\vert)^{\frac{\sigma}{2}}}.    \label{I_2(xi)} 
\end{align}
Secondly, we can deduce that 
\begin{align*}
\left(1+\left\vert \xi \right\vert^{\frac{\sigma}{2}}\right)\left\vert I_{1}(\xi) \right\vert \leq & \left(1+\left\vert \xi \right\vert^{\frac{\sigma}{2}}\right)\int_{\mathbb{R}^{d}}\mathbf{1}_{\left\vert \cdot \right\vert \leq 1}(\xi-\zeta)\widehat{V_{\varepsilon}}(\xi-\zeta)(1-\chi(\zeta))\left\vert s (\zeta)\right\vert d\zeta \notag\\ 
\lesssim& \int_{\mathbb{R}^{d}}\left\vert \xi-\zeta \right\vert^{\frac{\sigma}{2}}\mathbf{1}_{\left\vert\cdot \right\vert \leq 1}(\xi-\zeta)\widehat{V_{\varepsilon}}(\xi-\zeta)(1-\chi(\zeta))\left\vert s (\zeta)\right\vert d\zeta \notag\\&+\int_{\mathbb{R}^{d}}\mathbf{1}_{\left\vert\cdot \right\vert \leq 1}(\xi-\zeta)\widehat{V_{\varepsilon}}(\xi-\zeta)(1-\chi(\zeta))\left(1+\left\vert \zeta \right\vert^{\frac{\sigma}{2}}\right)\left\vert s (\zeta)\right\vert d\zeta \notag\\
\lesssim & \int_{\mathbb{R}^{d}}\mathbf{1}_{\left\vert \cdot \right\vert\leq 1}(\xi-\zeta)\left\vert \xi-\zeta \right\vert^{\frac{\sigma}{2}}\widehat{V}(\xi-\zeta)\left|(1-\chi(\zeta))s(\zeta)\right|d\zeta \notag\\
&+\int_{\mathbb{R}^{d}}\mathbf{1}_{\left\vert \cdot \right\vert\leq 1}(\xi-\zeta)\widehat{V}(\xi-\zeta)\left|(1-\chi(\zeta))\left(1+\left\vert \zeta \right\vert^{\frac{\sigma}{2}}\right)s(\zeta)\right|d\zeta.
\end{align*}
The right-hand side of the last equation is bounded by 
\begin{align}
&\int_{\left\vert \zeta\right\vert \geq \frac{1}{4\overline{\delta}}}\left\vert s(\zeta) \right\vert d\zeta+\int_{\mathbb{R}^{d}}\mathbf{1}_{\left\vert \cdot \right\vert\leq 1}(\xi-\zeta)\widehat{V}(\xi-\zeta)\left|(1-\chi(\zeta))\left(1+\left\vert \zeta \right\vert^{\frac{\sigma}{2}}\right)s(\zeta)\right| d\zeta \notag\\
&\lesssim 
 S_{0}o_{\varepsilon}(1)+\underset{\left\vert \zeta \right\vert \geq \frac{1}{4\overline{\delta}}}{\sup}\left\vert \left(1+\left\vert \zeta \right\vert^{\frac{\sigma}{2}}\right)s(\zeta)\right\vert\lesssim S_{0}o_{\varepsilon}(1).     
\label{first est for I_1}    
\end{align}
The combination of \eqref{I_2(xi)} and \eqref{first est for I_1} reveal that 
\begin{align}
\left\vert I(\xi) \right\vert \lesssim \frac{S_{0}o_{\varepsilon}(1)}{(1+\left\vert \xi \right\vert)^{\frac{\sigma}{2}}}.    \label{est for I}        
\end{align}
To bound $J(\xi)$, recall that by assumption, there is some $C>0$ and $r>0$ such that 
\begin{align*}
   \frac{1}{C(1+\left\vert \xi \right\vert^{r})}\leq \widehat{K^{1}}(\xi)\leq \frac{C}{1+\left\vert \xi \right\vert^{r}}. 
\end{align*}
As a result, for any $\zeta, \xi$ such that 
$\left\vert \zeta \right\vert < \frac{\left\vert \xi\right\vert}{2} $ it holds that 
\begin{align*}
\widehat{K^{1}_{\delta_i}}(\xi-\zeta)=\widehat{K^{1}}(\delta_i(\xi-\zeta))\lesssim \frac{1}{1+\delta_i^r\left\vert \xi -\zeta\right\vert^r}\lesssim \frac{1}{1+\delta_i^r\left\vert \xi \right\vert^r}\lesssim \widehat{K^{1}_{\delta_i}}(\xi).    
\end{align*}
Consequently, we infer that
\begin{align*}
\widehat{V_{\varepsilon}}(\xi-\zeta)=\frac{1}{M(1+2C\varepsilon)} \stackrel[i=1]{M}{\sum}\widehat{K^{1}}(\delta_{i}(\xi-\zeta))\widehat{V}(\xi-\zeta)\lesssim 
\frac{1}{M(1+2C\varepsilon)} \stackrel[i=1]{M}{\sum}\widehat{K^{1}}(\delta_{i}\xi)\widehat{V}(\xi)=\widehat{V}_{\varepsilon}(\xi),
\end{align*}
for $\left\vert \zeta \right\vert<\frac{\left\vert \xi\right\vert}{2}$.
Hence we obtain 
\begin{align}
\left\vert J(\xi) \right\vert \lesssim \widehat{V_{\varepsilon}}(\xi)\int_{\mathbb{R}^{d}}\left\vert s \right\vert(\zeta)d\zeta \lesssim S'_{0}\widehat{V}_{\varepsilon}(\xi). \label{est on J}   \end{align}
Gathering Inequalities \eqref{xi less than 1/delta}, \eqref{est for I} and \eqref{est on J} yields the announced result. 
\end{proof}

As a corollary we can estimate the Fourier transform of the $\delta$-truncation of $V_{\varepsilon}$ as follows.  

\begin{cor}
Under the same assumptions and notations of Lemma \ref{Regularization }, it holds that 
\[
\left\vert\widehat{\zeta_{\delta}V_{\varepsilon}}(\xi)\right\vert\lesssim \widehat{V_{\varepsilon}}(\xi)+\frac{\delta^{d-k-\frac{\sigma}{2}}o_{\varepsilon\delta^{-1}}(1)}{(1+\left|\xi\right|)^{\frac{\sigma}{2}}}
\] \label{cor FT}
and 
\[
\left\vert\widehat{(1-\zeta_{\delta})V_{\varepsilon}}(\xi)\right\vert\lesssim \widehat{V_{\varepsilon}}(\xi)+\frac{\delta^{d-k-\frac{\sigma}{2}}o_{\varepsilon\delta^{-1}}(1)}{(1+\left|\xi\right|)^{\frac{\sigma}{2}}}.
\]

\end{cor}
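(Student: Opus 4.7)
Since $\zeta_{\delta}V_{\varepsilon}=V_{\varepsilon}-(1-\zeta_{\delta})V_{\varepsilon}$, Fourier transforming yields $\widehat{\zeta_{\delta}V_{\varepsilon}}=\widehat{V_{\varepsilon}}-\widehat{(1-\zeta_{\delta})V_{\varepsilon}}$, so both statements of the corollary reduce to a single estimate on $|\widehat{(1-\zeta_{\delta})V_{\varepsilon}}(\xi)|$. The plan is to convert the $\delta$-dependent cut-off into a fixed one by a rescaling which simultaneously exchanges the mollification scale $\varepsilon$ for $\varepsilon/\delta$, and then to feed the result into Lemma \ref{bound on convolution}.

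Define the rescaled kernel $V_{\varepsilon}^{\delta}(y):=\delta^{k}V_{\varepsilon}(\delta y)$. Plugging the homogeneity $V(\delta y)=\delta^{-k}V(y)$ into the construction \eqref{aux55}--\eqref{aux56}, a direct computation yields
\[
V_{\varepsilon}^{\delta}=\frac{1}{M(1+2C\varepsilon)}\sum_{i=1}^{M}K_{\eta_{i}}^{1}\star V,\qquad \eta_{i}:=\delta_{i}/\delta,
\]
so $V_{\varepsilon}^{\delta}$ retains the structural properties of $V_{\varepsilon}$ which are used in the proof of Lemma \ref{bound on convolution}, now with maximal mollification scale bounded by $\varepsilon/\delta$. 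In particular $\widehat{V_{\varepsilon}^{\delta}}(\eta)\geq 0$ and $\widehat{V_{\varepsilon}^{\delta}}(\eta)=\delta^{k-d}\widehat{V_{\varepsilon}}(\eta/\delta)$. A change of variable then produces the key scaling identity
\[
\widehat{(1-\zeta_{\delta})V_{\varepsilon}}(\xi)=\delta^{d-k}\widehat{(1-\zeta)V_{\varepsilon}^{\delta}}(\delta\xi)=\delta^{d-k}\bigl(\widehat{1-\zeta}\star\widehat{V_{\varepsilon}^{\delta}}\bigr)(\delta\xi).
\]

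Next, apply Lemma \ref{bound on convolution} to the kernel $V_{\varepsilon}^{\delta}$ with $s:=\widehat{1-\zeta}$. Since $1-\zeta\in C_{0}^{\infty}(\mathbb{R}^{d})$, its Fourier transform is Schwartz, hence the decay hypothesis $|s(\xi)|\leq S_{0}(1+|\xi|)^{-\sigma}$ holds for any $\sigma>2d$ with $S_{0},S_{0}'$ absolute constants depending only on $\zeta$. The conclusion of the lemma reads
\[
\bigl|\bigl(\widehat{1-\zeta}\star\widehat{V_{\varepsilon}^{\delta}}\bigr)(\eta)\bigr|\lesssim \widehat{V_{\varepsilon}^{\delta}}(\eta)+\frac{o_{\varepsilon/\delta}(1)}{(1+|\eta|)^{\sigma/2}},
\]
the $o(1)$ factor appearing as $o_{\varepsilon/\delta}(1)$ because the tail integrals driving it in the proof of Lemma \ref{bound on convolution} are controlled by the maximal mollification scale, now bounded by $\varepsilon/\delta$.

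Substituting $\eta=\delta\xi$, using $\widehat{V_{\varepsilon}^{\delta}}(\delta\xi)=\delta^{k-d}\widehat{V_{\varepsilon}}(\xi)$, and invoking the elementary inequality $\delta(1+|\xi|)\leq 1+\delta|\xi|$ valid for $\delta\leq 1$---which gives $\delta^{d-k}(1+\delta|\xi|)^{-\sigma/2}\leq\delta^{d-k-\sigma/2}(1+|\xi|)^{-\sigma/2}$---one arrives at the desired bound on $|\widehat{(1-\zeta_{\delta})V_{\varepsilon}}(\xi)|$, and via the initial reduction at both conclusions of the corollary. The main obstacle is the bookkeeping of the rescaling: one must check that $V_{\varepsilon}^{\delta}$ genuinely falls within the scope of Lemma \ref{bound on convolution} with the rescaled scales $\eta_{i}$, and carefully trace how that lemma's $o(1)$ term turns into $o_{\varepsilon/\delta}(1)$ under this substitution.
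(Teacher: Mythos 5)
Your proof is correct and follows essentially the same route as the paper: reduce to the estimate on $\widehat{(1-\zeta_\delta)V_{\varepsilon}}$ via the trivial splitting $\widehat{\zeta_\delta V_{\varepsilon}}=\widehat{V_{\varepsilon}}-\widehat{(1-\zeta_\delta)V_{\varepsilon}}$, then rescale to trade the $\delta$-dependent cut-off for a fixed one while simultaneously turning the mollification scales $\delta_i$ into $\delta_i/\delta$, and feed the result into Lemma~\ref{bound on convolution} at scale $\varepsilon/\delta$. The only stylistic difference is that you record the rescaling once through the explicitly defined kernel $V_{\varepsilon}^{\delta}(y)=\delta^{k}V_{\varepsilon}(\delta y)$ (with the clean scaling identity for its Fourier transform), whereas the paper performs the same rescaling inside the convolution integral using the abuse of notation $\widehat{V}_{\delta^{-1}\varepsilon}$; the substance is identical.
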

\begin{proof}
Set $\chi_{\delta}=1-\zeta_{\delta}$. Observe that  
$\widehat{\zeta_{\delta}V_{\varepsilon}}=\widehat{\zeta_{\delta}}\star\widehat{V_{\varepsilon}}=\widehat{V_{\varepsilon}}-\widehat{\chi_{\delta}}\star\widehat{V_{\varepsilon}}$,
which shows that the first inequality would follow from the second inequality. To prove the second inequality, notice that using the construction of $V_\varepsilon$ in \eqref{aux55}-\eqref{aux56}, we can deduce for each $\delta> 0$ that 
\begin{align*}
\widehat V_{\varepsilon}(\delta\xi)= \frac{1}{M(1+2C\varepsilon)} \sum_{i=1}^{d} \widehat {K^{1}}(\delta_i \delta \xi) \widehat {V}(\delta\xi)=\frac{\delta^{k-d}}{M(1+2C\varepsilon)} \sum_{i=1}^{d} \widehat {{K}_{\delta_i \delta}^{1}} (\xi) \widehat V(\xi)=\delta^{k-d} \frac{1+2C\delta\varepsilon}{1+2C\varepsilon}\widehat {V}_{\delta\varepsilon}(\xi).   
\end{align*}
Note that $\widehat{\chi_{\delta}}(x)=\delta^{d}\widehat{\chi}(\delta x)$. Therefore, we get
\begin{align*}
(\widehat{\chi_{\delta}}\star \widehat{V}_{\varepsilon})(\delta \xi)&=\int_{\mathbb{R}^{d}}\widehat{\chi}(\delta \xi -\zeta)\widehat{V}_{\varepsilon}\left(\delta^{-1}\zeta\right)d\zeta=\delta^{d-k}\frac{1+2C\delta^{-1}\varepsilon}{1+2C\varepsilon}\int_{\mathbb{R}^{d}}\widehat{\chi}(\delta \xi -\zeta)\widehat{V}_{\delta^{-1}\varepsilon}(\zeta)\,d\zeta\\&= \delta^{d-k} \frac{1+2C\delta^{-1}\varepsilon}{1+2C\varepsilon} (\widehat{\chi} \star \widehat{V}_{\delta^{-1}\varepsilon})(\delta \xi). 
\end{align*}
By using Lemma \ref{bound on convolution} with $s=\widehat{\chi}$, we conclude that 
\begin{align*}
\left\vert (\widehat{\chi_{\delta}}\star \widehat{V}_{\varepsilon})(\delta \xi)\right\vert&\leq \frac{1+2C\delta^{-1}\varepsilon}{1+2C\varepsilon} \left( C_{1}\delta^{d-k}\widehat{V}_{\delta^{-1}\varepsilon}(\delta \xi)+C_{2}\delta^{d-k}\frac{o_{\varepsilon \delta^{-1}}(1)}{(1+\left\vert \delta\xi \right\vert)^{\frac{\sigma}{2}}} \right)\\
&\leq C_{1}\widehat{V}_{\varepsilon}(\xi)+C_{2}\frac{\delta^{d-k-\frac{\sigma}{2}}o_{\varepsilon \delta^{-1}}(1)}{(1+\left\vert \xi \right\vert)^{\frac{\sigma}{2}}},    
\end{align*}
where we made use again of \eqref{aux55}-\eqref{aux56}.
\end{proof}

The following Lemma is the deterministic version of \cite[Lemma 5.1]{bresch2019modulated},
and it is meant to provide a bound on the truncated off-diagonal interaction
part in terms of the modulated energy. 

\begin{lem}\label{Bound on the non diag terms} 
Under the same assumptions and notations of Lemma \ref{Regularization }, let $\mu\in L^{1}\cap L^{\infty}(\mathbb{R}^d)$ be a probability density and let $\mu_N$ be an empirical measure defined by \eqref{empirical intro} associated to any $\mathbf{x}_{N}\in \mathbb{R}^{dN}\setminus \triangle_{N}$ and $\mathbf{m}_{N}\in\mathbb{M}^{N}$  such that $m_{i}^{N}\leq\overline{M}$ for some $\overline{M}>0$. Then, there is a constant $C=C(\left\Vert \mu\right\Vert _{\infty},d,k,p,q)$  such that the renormalized energy \eqref{renomenergyNconf} can be estimated from below as 
\begin{align}
\mathcal{E}_{N}(\mu,\mu_{N})=\int_{x\neq y}V(x-y)\left(\mu_{N}-\mu\right)^{\otimes 2}(dxdy)\geq-\frac{\overline{M}\left\Vert V_{\varepsilon}\right\Vert _{\infty}}{N}-C\left(\frac{\varepsilon}{\delta^{k+1-\frac{d}{q}}}+\delta^{\frac{d}{p}-k}+o_{\varepsilon}(1)\right) 
\label{1st identity}
\end{align}
and
\begin{align}
\frac{1}{N^{2}}\underset{i\neq j}{\sum}m_{i}m_{j}(1-\zeta_{\delta})V(x_{i}-x_{j})\leq C\left(\mathcal{E}_{N}(\mu,\mu_{N})+\mathcal{O}(\varepsilon,\delta,N)\right) \label{2nd identity}    
\end{align}
where $\mathcal{O}$ has the form 
\begin{align}
\mathcal{O}(\varepsilon,\delta,N)=\frac{\varepsilon}{\delta^{k+1-\frac{d}{q}}}+\delta^{d-k-\frac{\sigma}{2}}o_{\varepsilon\delta^{-1}}(1)+\frac{\overline{M}\left\Vert V_{\varepsilon} \right\Vert_{\infty}}{N}+\delta^{\frac{d}{p}-k}.   
\label{aux57}
\end{align}  
\end{lem}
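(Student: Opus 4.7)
The plan is to prove both inequalities by a common strategy: replace $V$ by the smooth regularization $V_\varepsilon$, use the Fourier positivity $\widehat{V_\varepsilon}\geq 0$ together with the pointwise and $L^p+L^q$ approximation properties collected in Lemma \ref{Regularization }, and invoke Corollary \ref{cor FT} to control $\widehat{\zeta_\delta V_\varepsilon}$ by $\widehat{V_\varepsilon}$ itself up to a rapidly decaying remainder.

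\textbf{Base Fourier identity.} By Plancherel and item i, $\int V_\varepsilon(\mu_N-\mu)^{\otimes 2}=\int\widehat{V_\varepsilon}|\widehat{\mu_N}-\widehat{\mu}|^2\geq 0$. The diagonal contribution of $(\mu_N-\mu)^{\otimes 2}$ comes only from $\mu_N^{\otimes 2}$ (since $\mu$ is absolutely continuous), and equals $V_\varepsilon(0)N^{-2}\sum m_i^2\leq \overline M\|V_\varepsilon\|_\infty/N$ by $\sum m_i^2\leq \overline M N$. Therefore
\[
\int_{x\neq y}V_\varepsilon(\mu_N-\mu)^{\otimes 2}\geq -\frac{\overline M\|V_\varepsilon\|_\infty}{N}.
\]

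\textbf{Step 1: First inequality.} Split $\mathcal{E}_N=\int_{x\neq y}V_\varepsilon(\mu_N-\mu)^{\otimes 2}+\int_{x\neq y}(V-V_\varepsilon)(\mu_N-\mu)^{\otimes 2}$. The first term is controlled by the base identity. For the second, split $V-V_\varepsilon=(1-\zeta_\delta)(V-V_\varepsilon)+\zeta_\delta(V-V_\varepsilon)$. The near-origin part is $\geq -\varepsilon$ pointwise by item iv, and contributes $O(\delta^{d-k})$ to the $\mu\otimes\mu$ and $\mu_N\otimes\mu$ terms via $\|(1-\zeta_\delta)V\|_1\lesssim\delta^{d-k}\leq \delta^{d/p-k}$. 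The far-from-origin part $\zeta_\delta(V-V_\varepsilon)$ vanishes at the origin, so one may drop the $x\neq y$ restriction; the $\mu^{\otimes 2}$ and $\mu_N\otimes\mu$ pieces are controlled by item iii via Young/H\"older, giving errors of order $\varepsilon/\delta^{k+1-d/q}+\delta^{d/p-k}$. The only subtle piece is the $\mu_N^{\otimes 2}$ integral of $\zeta_\delta(V-V_\varepsilon)=\zeta_\delta V-\zeta_\delta V_\varepsilon$; since $\zeta_\delta V\geq 0$ it contributes nonnegatively, while the $\zeta_\delta V_\varepsilon$ piece is handled in Fourier by $\int \widehat{\zeta_\delta V_\varepsilon}|\widehat{\mu_N}|^2$ and Corollary \ref{cor FT}, using $|\widehat{\mu_N}|\leq 1$ and $\sigma>2d$ to bound the remainder by $\delta^{d-k-\sigma/2}o_{\varepsilon\delta^{-1}}(1)$.

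\textbf{Step 2: Second inequality.} Setting $T:=N^{-2}\sum_{i\neq j}m_im_j(1-\zeta_\delta)V(x_i-x_j)=\int_{x\neq y}(1-\zeta_\delta)V\,\mu_N^{\otimes 2}$, expand $\mu_N^{\otimes 2}=(\mu_N-\mu)^{\otimes 2}+2\mu_N\otimes\mu-\mu^{\otimes 2}$ to write
\[
T=\int_{x\neq y}(1-\zeta_\delta)V(\mu_N-\mu)^{\otimes 2}+2\!\int\!(1-\zeta_\delta)V\mu_N\otimes\mu-\!\int\!(1-\zeta_\delta)V\mu^{\otimes 2},
\]
where the last two terms are $O(\delta^{d-k})\subset \mathcal{O}$ by the $L^1$ bound on $(1-\zeta_\delta)V$ and $\mu\in L^1\cap L^\infty$. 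Writing $(1-\zeta_\delta)V=V-\zeta_\delta V$, the first integral equals $\mathcal{E}_N-\int_{x\neq y}\zeta_\delta V(\mu_N-\mu)^{\otimes 2}$, so it remains to bound the $\zeta_\delta V$ integral in absolute value. Decompose $\zeta_\delta V=\zeta_\delta V_\varepsilon+\zeta_\delta(V-V_\varepsilon)$: the first uses Plancherel and Corollary \ref{cor FT},
\[
\bigl|\!\int\!\widehat{\zeta_\delta V_\varepsilon}|\widehat{\mu_N}-\widehat{\mu}|^2\bigr|\lesssim \int \widehat{V_\varepsilon}|\widehat{\mu_N}-\widehat{\mu}|^2 + \delta^{d-k-\sigma/2}o_{\varepsilon\delta^{-1}}(1),
\]
and the Plancherel integral equals $\int V_\varepsilon(\mu_N-\mu)^{\otimes 2}\leq \mathcal{E}_N +\overline M\|V_\varepsilon\|_\infty/N+(\text{item iii errors})$ by the first inequality. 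The second contribution, $\zeta_\delta(V-V_\varepsilon)$, is handled analogously: its non-$\mu_N^{\otimes 2}$ pieces are absorbed via item iii, while the $\mu_N^{\otimes 2}$ piece is rerouted through Fourier and Corollary \ref{cor FT} once more. Gathering everything yields $T\leq C(\mathcal{E}_N+\mathcal{O}(\varepsilon,\delta,N))$ with $\mathcal{O}$ as in \eqref{aux57}.

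\textbf{Main obstacle.} The delicate point is that integrals against $\mu_N^{\otimes 2}$ cannot be bounded by $L^p+L^q$ norms of the kernel, because $\mu_N$ is a sum of Dirac masses. The Fourier approach via $\widehat{V_\varepsilon}\geq 0$ is the only workable route, and Corollary \ref{cor FT} (which exploits the smoothness structure of $V_\varepsilon$ to compare $\widehat{\zeta_\delta V_\varepsilon}$ with $\widehat{V_\varepsilon}$) is what allows the $\zeta_\delta$-truncated Coulomb interaction evaluated at particle pairs to be absorbed into the modulated energy, up to controllable $\varepsilon,\delta,N$-errors.
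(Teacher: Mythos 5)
Your overall strategy matches the paper's: regularize $V$ by $V_\varepsilon$, exploit $\widehat{V_\varepsilon}\geq 0$ plus the $L^p+L^q$ approximation from Lemma \ref{Regularization }, and compare $\widehat{\zeta_\delta V_\varepsilon}$ (resp.\ $\widehat{(1-\zeta_\delta) V_\varepsilon}$) with $\widehat{V_\varepsilon}$ via Corollary \ref{cor FT}. Step~2's architecture (expand $\mu_N^{\otimes 2}=(\mu_N-\mu)^{\otimes 2}+2\mu_N\otimes\mu-\mu^{\otimes 2}$, isolate $\mathcal{E}_N-\int_{x\neq y}\zeta_\delta V\,(\mu_N-\mu)^{\otimes 2}$, and use $\int V_\varepsilon (\mu_N-\mu)^{\otimes 2}\leq\mathcal{E}_N+\overline M\|V_\varepsilon\|_\infty/N+\text{errors}$ from the Step~1 intermediate bound) is exactly the paper's route.

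However, there is a genuine gap whenever you propose to ``handle in Fourier by $\int\widehat{\zeta_\delta V_\varepsilon}\,|\widehat{\mu_N}|^2$ and Corollary \ref{cor FT}'' the $\mu_N^{\otimes 2}$ piece of $\zeta_\delta(V-V_\varepsilon)$. Corollary \ref{cor FT} gives $|\widehat{\zeta_\delta V_\varepsilon}|\lesssim\widehat{V_\varepsilon}+\delta^{d-k-\sigma/2}o_{\varepsilon\delta^{-1}}(1)(1+|\xi|)^{-\sigma/2}$, so the dominant contribution is $\int\widehat{V_\varepsilon}\,|\widehat{\mu_N}|^2=\int V_\varepsilon\,\mu_N^{\otimes 2}$, which is only bounded by $\|V_\varepsilon\|_\infty\sim 1/\Theta(\varepsilon)\to\infty$. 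There is no $\widehat\mu$ to produce the cancellation $|\widehat{\mu_N}-\widehat\mu|^2$ that makes Corollary \ref{cor FT} useful; the inequality you obtain is vacuous. (It would also inject a $\delta^{d-k-\sigma/2}o_{\varepsilon\delta^{-1}}(1)$ term into \eqref{1st identity}, which the stated bound does not contain, precisely because the paper avoids Corollary \ref{cor FT} in Step 1 altogether.) The fix is the one you already use for the near-origin piece: item iv gives $V-V_\varepsilon\geq-\varepsilon$ pointwise, hence $\int_{x\neq y}\zeta_\delta(V-V_\varepsilon)\mu_N^{\otimes 2}\geq-\varepsilon$ directly (and similarly for the corresponding term in Step~2, where only a lower bound is needed). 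Equivalently, do what the paper does: first reassemble the \emph{full} tensor $(\mu_N-\mu)^{\otimes 2}$ from the off-diagonal sum (adding back the diagonal $\tfrac{1}{N^2}\sum m_j^2 V_\varepsilon(0)\leq\overline M\|V_\varepsilon\|_\infty/N$ and the $\mu_N\otimes\mu$, $\mu^{\otimes 2}$ pieces), so that Corollary \ref{cor FT} is only ever applied against $|\widehat{\mu_N}-\widehat\mu|^2$, which \emph{is} controlled by the modulated energy.
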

\textit{Proof}. \textbf{Step 1}. In this step we prove \eqref{1st identity}. We denote by $\lesssim$ inequality up to a constant depending only on $\left\Vert \mu\right\Vert _{\infty},d,k,p,q$. We have 
\begin{align}
\int_{x\neq y}V(x-y)\left(\mu_{N}-\mu\right)^{\otimes2}(dxdy)=&\int_{\mathbb{R}^{d}\times\mathbb{R}^{d}}\zeta_{\delta}(x-y)V(x-y)\left(\mu_{N}-\mu\right)^{\otimes2}(dxdy)\notag\\&+\int_{x\neq y}\left(1-\zeta_{\delta}(x-y)\right)V(x-y)\left(\mu_{N}-\mu\right)^{\otimes2}(dxdy). \label{first identity in lemma 4.3} 
\end{align}
Observing that 
\begin{align}
\left\Vert ((1-\zeta_{\delta})V)\star \mu \right\Vert_{\infty} \lesssim \delta^{\frac{d-pk}{p}} \label{INFINITY NORM OF TRUN}    
\end{align}
by H\"older's inequality, we see that the second integral in the right-hand side is 
\begin{align*}
\int_{x\neq y}\left(1-\zeta_{\delta}(x-y)\right)V(x-y)\left(\mu_{N}-\mu\right)^{\otimes2}(dxdy)
=&\,\frac{1}{N^{2}}\underset{i\neq j}{\sum}m_{i}m_{j}V(x_{i}-x_{j})\left(1-\zeta_{\delta}(x_{i}-x_{j})\right)\\&-\frac{2}{N}\stackrel[i=1]{N}{\sum}m_{i}((1-\zeta_{\delta})V)\star\mu(x_{i})\\&-\int_{\mathbb{R}^{d}}\mu(x)((1-\zeta_{\delta})V)\star\mu(x)dx
\\\geq&\,\frac{1}{N^{2}}\underset{i\neq j}{\sum}m_{i}m_{j}V(x_{i}-x_{j})\left(1-\zeta_{\delta}(x_{i}-x_{j})\right)\\&-C(\left\Vert \mu\right\Vert _{\infty},d,k,p)\delta^{\frac{d-pk}{p}}. 
\end{align*}
Hence, in view of Lemma \ref{Regularization }-iv,
we get 
\begin{align}
\int_{x\neq y}(1-\zeta_{\delta}(x-y))&V(x-y)\left(\mu_{N}-\mu\right)^{\otimes2}(dxdy)  \nonumber\\
&\geq\frac{1}{N^{2}}\underset{i\neq j}{\sum}m_{i}m_{j}V_{\varepsilon}(x_{i}-x_{j})\left(1-\zeta_{\delta}(x_{i}-x_{j})\right)-C(\left\Vert \mu\right\Vert _{\infty},d,k,p)\delta^{\frac{d-pk}{p}}-\varepsilon. 
\label{eq:-5}   
\end{align}
Moreover, Lemma \ref{Regularization }-iii entails 
\begin{align*}
\left\Vert(\zeta_{\delta}V)\star\mu-(\zeta_{\delta}V_{\varepsilon})\star\mu\right\Vert _{\infty}&\leq \left\Vert (\zeta_{\delta}(V_{\varepsilon}-V))\star\mu\right\Vert _{\infty}\\&\lesssim \left\Vert \zeta_{\delta}(V_{\varepsilon}-V)\right\Vert _{L^{p}+L^{q}}\lesssim \frac{\varepsilon}{\delta^{k+1-\frac{d}{q}}}+\delta^{\frac{d}{p}-k},     
\end{align*}
so that we can bound from below  the first integral in \eqref{first identity in lemma 4.3} as 
\begin{align}
\int_{\mathbb{R}^{d}\times \mathbb{R}^{d}}&\zeta_{\delta}(x-y)V(x-y)\left(\mu_{N}-\mu\right)^{\otimes2}(dxdy) \nonumber\\
&\geq\int_{\mathbb{R}^{d}\times \mathbb{R}^{d}}\zeta_{\delta}(x-y)V_{\varepsilon}(x-y)\left(\mu_{N}-\mu\right)^{\otimes2}(dxdy)-C(\left\Vert \mu\right\Vert _{\infty},d,k,p)\left(\frac{\varepsilon}{\delta^{k+1-\frac{d}{q}}}+\delta^{\frac{d}{p}-k}\right)-\varepsilon,\label{eq:-6}
\end{align}
using again Lemma \ref{Regularization }-iv. Hence summing up inequalities \eqref{eq:-5} and \eqref{eq:-6} we get 
\begin{align}
\int_{x\neq y}V(x-y)\left(\mu_{N}-\mu\right)^{\otimes2}(dxdy)\geq&\int_{\mathbb{R}^{d}\times\mathbb{R}^{d}}\zeta_{\delta}(x-y)V_{\varepsilon}(x-y)\left(\mu_{N}-\mu\right)^{\otimes2}(dxdy) \notag
\\
&+\frac{1}{N^{2}}\underset{i\neq j}{\sum}m_{i}m_{j}V_{\varepsilon}(x_{i}-x_{j})\left(1-\zeta_{\delta}(x_{i}-x_{j})\right) \notag\\&-C(\left\Vert \mu\right\Vert _{\infty},d,k,p)\left(\frac{\varepsilon}{\delta^{k+1-\frac{d}{q}}}+\delta^{\frac{d}{p}-k}+\varepsilon \right). \label{aux5}
\end{align}
Notice that the second term in the right-hand side can be rewritten as
\begin{align}
\frac{1}{N^{2}}\underset{i\neq j}{\sum}m_{i}m_{j}V_{\varepsilon}(x_{i}-x_{j})\left(1-\zeta_{\delta}(x_{i}-x_{j})\right)=&\int_{\mathbb{R}^{d}\times\mathbb{R}^{d}}(1-\zeta_{\delta}(x-y))V_{\varepsilon}(x-y) (\mu_{N}-\mu)^{\otimes2}(dxdy)\notag\\
&-\frac{1}{N^{2}}\sum_{j=1}^{N}m_{j}^{2}V_{\varepsilon}(0)-\int_{\mathbb{R}^{d}}((1-\zeta_{\delta})V_{\varepsilon})\star\mu(x) \mu(x)dx\notag\\
&+\frac{2}{N}\sum_{j=1}^{N}m_{j}((1-\zeta_{\delta})V_{\varepsilon})\star\mu(x_{j}).\label{aux6}
\end{align}
Using Lemma 5.1-ii and \eqref{INFINITY NORM OF TRUN}, we have
\begin{align*}
\left\Vert ((1-\zeta_{\delta}) V_{\varepsilon})\star \mu\right\Vert_{\infty}&\leq \left\Vert ((1-\zeta_{\delta})( V_{\varepsilon}-V))\star \mu\right\Vert_{\infty}+\left\Vert ((1-\zeta_{\delta})V)\star \mu\right\Vert_{\infty}\\
&\lesssim \left\Vert V_{\varepsilon}-V\right\Vert_{L^{p}+L^{q}}+\delta^{\frac{d}{p}-k}\lesssim \eta(\varepsilon)+\delta^{\frac{d}{p}-k}.
\end{align*}
Inserting this into \eqref{aux6} together with a trivial estimate on the second term of the right-hand side of \eqref{aux6} we obtain
\begin{align}
\frac{1}{N^{2}}\underset{i\neq j}{\sum}m_{i}m_{j}V_{\varepsilon}(x_{i}-x_{j})\left(1-\zeta_{\delta}(x_{i}-x_{j})\right)\geq&\int_{\mathbb{R}^{d}\times\mathbb{R}^{d}}(1-\zeta_{\delta}(x-y))V_{\varepsilon}(x-y) (\mu_{N}-\mu)^{\otimes2}(dxdy)\notag\\
&-\frac{\overline{M}\left\Vert V_{\varepsilon}\right\Vert _{\infty}}{N}-C(\left\Vert \mu\right\Vert _{\infty},d,k,p)\left(\delta^{\frac{d}{p}-k}+\eta(\varepsilon) \right). \label{aux4}
\end{align}
We finally substitute \eqref{aux4} into \eqref{aux5} to conclude that
\begin{align}
\int_{x\neq y}V(x-y)\left(\mu_{N}-\mu\right)^{\otimes2}(dxdy)\geq&\int_{\mathbb{R}^{d}\times\mathbb{R}^{d}}V_{\varepsilon}(x-y)\left(\mu_{N}-\mu\right)^{\otimes2}(dxdy) -\frac{\overline{M}\left\Vert V_{\varepsilon}\right\Vert _{\infty}}{N}\notag\\
&-C(\left\Vert \mu\right\Vert _{\infty},d,k,p)\left(\frac{\varepsilon}{\delta^{k+1-\frac{d}{q}}}+\delta^{\frac{d}{p}-k}+\eta(\varepsilon) \right).\label{eq:-7}
\end{align}
By Lemma \ref{Regularization } we have $\widehat{V_{\varepsilon}}\geq0$ so that in view of (\ref{eq:-7}) we obtain 
\[
\int_{x\neq y}V(x-y)\left(\mu_{N}-\mu\right)^{\otimes2}(dxdy)\geq-\frac{\overline{M}\left\Vert V_{\varepsilon}\right\Vert _{\infty}}{N}-C(\left\Vert \mu\right\Vert _{\infty},d,k,p)\left(\frac{\varepsilon}{\delta^{k+1-\frac{d}{q}}}+\delta^{\frac{d}{p}-k}+\eta(\varepsilon)\right),
\]
which is the desired inequality \eqref{1st identity}. 

\vskip 12pt

 \textbf{Step 2}. We now prove   \eqref{2nd identity}. We follow the same procedure as in \eqref{aux6} reconstructing the tensor measure $\left(\mu_{N}-\mu\right)^{\otimes2}$ together with \eqref{INFINITY NORM OF TRUN} to deduce 
\begin{align}
\frac{1}{N^{2}}\underset{i\neq j}{\sum}m_{i}m_{j}(1-\zeta_{\delta})(x_{i}-x_{j})V(x_{i}-x_{j})\leq&\int_{x\neq y}(1-\zeta_{\delta})(x-y)V(x-y)\left(\mu_{N}-\mu\right)^{\otimes2}(dxdy) \notag \\&+C(k,d,\left\Vert \mu\right\Vert_{\infty})\delta^{\frac{d-kp}{p}} 
\notag\\=&\int_{x\neq y}V(x-y)\left(\mu_{N}-\mu\right)^{\otimes2}(dxdy)\notag \\
&-\int_{x\neq y}\zeta_{\delta}(x-y)V(x-y)\left(\mu_{N}-\mu\right)^{\otimes2}(dxdy) \notag\\
&+C(k,d,\left\Vert \mu\right\Vert_{\infty})\delta^{\frac{d-kp}{p}}, \label{ineaa}
\end{align}
Notice that there is no diagonal term with respect to \eqref{aux6} since the integration domain avoids the diagonal.
Inequality \eqref{eq:-6} can be written as
\begin{align}
-\int_{x\neq y}\zeta_{\delta}(x-y)V(x-y)\left(\mu_{N}-\mu\right)^{\otimes2}(dxdy)
\leq&\,-\int_{\mathbb{R}^{d}\times\mathbb{R}^{d}}\zeta_{\delta}(x-y)V_{\varepsilon}(x-y)\left(\mu_{N}-\mu\right)^{\otimes2}(dxdy)\notag\\
&+C(\left\Vert \mu\right\Vert _{\infty},d,k,p)\left(\frac{\varepsilon}{\delta^{k+1-\frac{d}{q}}}+\delta^{\frac{d}{p}-k}\right)+\varepsilon\notag\\
\leq &\,\int_{\mathbb{R}^{d}\times\mathbb{R}^{d}}(1-\zeta_{\delta})(x-y)V_{\varepsilon}(x-y)\left(\mu_{N}-\mu\right)^{\otimes2}(dxdy)\notag\\
&+C(\left\Vert \mu\right\Vert _{\infty},d,k,p)\left(\frac{\varepsilon}{\delta^{k+1-\frac{d}{q}}}+\delta^{\frac{d}{p}-k}\right)+\varepsilon.    \label{ina}    
\end{align}
By Corollary \ref{cor FT} we have  $\widehat{(1-\zeta_{\delta})V_{\varepsilon}}\lesssim \widehat{V_{\varepsilon}}+\frac{\delta^{d-k-\frac{\sigma}{2}}o_{\varepsilon \delta^{-1}}(1)}{(1+\left|\xi\right|)^{\frac{\sigma}{2}}}$
so that 
\begin{align}
\int_{\mathbb{R}^{d}\times\mathbb{R}^{d}}(1-\zeta_{\delta})(x-y)V_{\varepsilon}(x-y)\left(\mu_{N}-\mu\right)^{\otimes2}(dxdy)\lesssim &\,\int_{\mathbb{R}^{d}\times\mathbb{R}^{d}} V_{\varepsilon}(x-y)\left(\mu_{N}-\mu\right)^{\otimes2}(dxdy)\notag\\
&+\delta^{d-k-\frac{\sigma}{2}}o_{\varepsilon\delta^{-1}}(1)\int_{\mathbb{R}^{d}}\frac{\left\vert 1-\widehat{\mu}\right\vert^{2}(\xi)}{(1+\left\vert \xi\right\vert)^{\frac{\sigma}{2}}}d\xi
\notag\\\lesssim&\, \int_{\mathbb{R}^{d}\times\mathbb{R}^{d}} V_{\varepsilon}(x-y)\left(\mu_{N}-\mu\right)^{\otimes2}(dxdy) +\delta^{d-k-\frac{\sigma}{2}}o_{\varepsilon\delta^{-1}}(1)\notag\\
\lesssim&\, \int_{x \neq y} V_{\varepsilon}(x-y)\left(\mu_{N}-\mu\right)^{\otimes2}(dxdy)\notag\\
&+\frac{\overline{M}\left\Vert V_{\varepsilon} \right\Vert_{\infty}}{N}+\delta^{d-k-\frac{\sigma}{2}}o_{\varepsilon\delta^{-1}}(1). \label{inee}
\end{align}
Notice that this passage to Fourier variables is justified since we work with the regular interaction potential $V_{\varepsilon}$ instead of $V$. By Lemma \ref{Regularization }, we obtain 
\begin{equation}
 \left\Vert V_{\varepsilon}\star\mu-V\star\mu\right\Vert_{\infty}\lesssim \left\Vert V_{\varepsilon}-V\right\Vert_{L^p+L^q(\mathbb{R}^d)}=\eta(\varepsilon). \label{ineee}   
\end{equation}
Gathering  \eqref{ina}, \eqref{inee}, and \eqref{ineee} we find  
\begin{align*}
-\int_{x\neq y}&\zeta_{\delta}(x-y)V(x-y)\left(\mu_{N}-\mu\right)^{\otimes2}(dxdy)\\\lesssim&
 \int_{x \neq y} V_{\varepsilon}(x-y)\left(\mu_{N}-\mu\right)^{\otimes2}(dxdy)+\frac{\overline{M}\left\Vert V_{\varepsilon} \right\Vert_{\infty}}{N}\\
 &+C(\left\Vert \mu\right\Vert _{\infty},d,k,p)\left(\frac{\varepsilon}{\delta^{k+1-\frac{d}{q}}}+\delta^{\frac{d}{p}-k}+\delta^{d-k-\frac{\sigma}{2}}o_{\varepsilon\delta^{-1}}(1)\right)\\
 \lesssim& \int_{x \neq y} (V_{\varepsilon}-V)(x-y)\left(\mu_{N}-\mu\right)^{\otimes2}(dxdy)+\mathcal{E}_{N}(\mu,\mu_{N})+\frac{\overline{M}\left\Vert V_{\varepsilon} \right\Vert_{\infty}}{N}\\
 &+C(\left\Vert \mu\right\Vert _{\infty},d,k,p)\left(\frac{\varepsilon}{\delta^{k+1-\frac{d}{q}}}+\delta^{\frac{d}{p}-k}+\delta^{d-k-\frac{\sigma}{2}}o_{\varepsilon\delta^{-1}}(1)\right).
\end{align*} 
Moreover, Lemma \ref{Regularization }-ii and Lemma \ref{Regularization }-iv using \eqref{ineee} implies that 
 \begin{align*}
 \int_{x \neq y} (V_{\varepsilon}-V)(x-y)\left(\mu_{N}-\mu\right)^{\otimes2}(dxdy)\lesssim o_{\varepsilon}(1)+\int_{x \neq y} (V_{\varepsilon}-V)(x-y)\mu_{N}^{\otimes2}(dxdy)\lesssim o_{\varepsilon}(1)+\varepsilon.
 \end{align*}
Therefore, in view of \eqref{ineaa} we obtain 
\begin{align*}
\frac{1}{N^{2}}\underset{i\neq j}{\sum}m_{i}m_{j}(1-\zeta_{\delta})V(x_{i}-x_{j})\leq C(k,d,p,q,\left\Vert \mu\right\Vert_{\infty})\left(\mathcal{E}_{N}(\mu,\mu_{N})+\mathcal{O}(\varepsilon,\delta,N)\right)     
\end{align*}
with 
\begin{align*}
\mathcal{O}(\varepsilon,\delta,N)=\frac{\varepsilon}{\delta^{k+1-\frac{d}{q}}}+\delta^{d-k-\frac{\sigma}{2}}o_{\varepsilon\delta^{-1}}(1)+\frac{\overline{M}\left\Vert V_{\varepsilon} \right\Vert_{\infty}}{N}+\delta^{\frac{d}{p}-k},     
\end{align*} 
as desired.
\qed

\

We are now ready to estimate $\mathcal{D}^{2}_{N}$ in \eqref{sketch sec 2} by means of $\mathcal{E}_{N}(\mu,\mu_{N})$ in \eqref{renomenergyN}, which is the main novelty of this section. 
\begin{lem}\label{commutator estimate1}
Under the assumptions of Lemma  \ref{Bound on the non diag terms}, it holds that 
\begin{align*}
\mathcal{D}^2_{N}(\mu)&=\int_{x \neq y}V(x-y)\left(\mu_{N}-\mu\right)(x)\left(h\left[\mu_{N}\right]-h\left[\mu\right]\right)(y)(dxdy)\leq C\left(\mathcal{E}_{N}(\mu,\mu_{N})+\eta(\varepsilon)+\mathcal{O}(\varepsilon,\delta,N)
\right), 
\end{align*} 
where $\mathcal{O}(\varepsilon,\delta,N)$ is given by \eqref{aux57}. 
\end{lem}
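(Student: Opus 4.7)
\textit{Proof proposal.} The natural plan is to mirror the near-field/far-field splitting used in Lemma \ref{Bound on the non diag terms}, replacing the singular kernel $V$ with the regularized kernel $V_{\varepsilon}$ so that we may pass to Fourier variables and invoke the $\dot H^{-1}$-Lipschitz estimate of Lemma \ref{h Lip estimate } with the positive weight $W=\widehat{V_{\varepsilon}}$. Concretely, I write $V = (1-\zeta_\delta) V + \zeta_\delta V$ so that
\[
\mathcal{D}^2_N(\mu) \;=\; \int_{x\neq y}(1-\zeta_{\delta})(x-y)V(x-y)(\mu_{N}-\mu)(x)(h[\mu_{N}]-h[\mu])(y)\,dxdy + \text{(far-field)}
\]
and treat the two contributions separately.

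For the near-field contribution, I would use the pointwise bound $|h[\mu_N]-h[\mu]| \leq \|S\|_\infty(\mu_N+\mu)$, which reduces that integral to a linear combination of four terms of the form $\int(1-\zeta_\delta)(x-y)V(x-y)\,\nu_1(x)\nu_2(y)\,dxdy$ with $\nu_i\in\{\mu_N,\mu\}$. The two terms containing at least one factor of $\mu$ are absorbed into $\mathcal{O}(\varepsilon,\delta,N)+\eta(\varepsilon)$ using the $L^\infty$ bound $\|(1-\zeta_\delta)V \star \mu\|_\infty\lesssim \delta^{(d-kp)/p}$ as in \eqref{INFINITY NORM OF TRUN}. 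The remaining term, quadratic in $\mu_N$, factorises (using total-weight conservation from Remark \ref{conservation of total weight}) into
\[
\|S\|_\infty \cdot \frac{1}{N^{2}}\sum_{i\neq j} m_i m_j (1-\zeta_\delta)(x_i-x_j) V(x_i-x_j),
\]
which is exactly the left-hand side of \eqref{2nd identity} and hence bounded by $C(\mathcal{E}_N(\mu,\mu_N)+\mathcal{O}(\varepsilon,\delta,N))$.

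For the far-field contribution I first replace $V$ by $V_{\varepsilon}$ at the cost of an error $\lesssim \varepsilon\delta^{-(k+1-d/q)}+\delta^{d/p-k}$ by Lemma \ref{Regularization }-iii, and then add back the diagonal $\{x=y\}$ at the cost of $\|V_\varepsilon\|_\infty/N$ multiplied by $\|h[\mu_N]-h[\mu]\|_{\mathrm{TV}}\lesssim \|S\|_\infty$, which is absorbed in $\overline{M}\|V_\varepsilon\|_\infty/N$. The resulting integral is over $\mathbb{R}^d\times\mathbb{R}^d$ and I can take Fourier transforms: by Plancherel and Cauchy--Schwarz,
\begin{align*}
\left|\int \zeta_\delta V_\varepsilon(x-y)(\mu_N-\mu)(x)(h[\mu_N]-h[\mu])(y)\,dxdy\right| \\
\leq \left(\int|\widehat{\zeta_\delta V_\varepsilon}|\,|\widehat{\mu_N-\mu}|^2\,d\xi\right)^{1/2}\left(\int|\widehat{\zeta_\delta V_\varepsilon}|\,|\widehat{h[\mu_N]-h[\mu]}|^2\,d\xi\right)^{1/2}.
\end{align*}
Corollary \ref{cor FT} lets me replace $|\widehat{\zeta_\delta V_\varepsilon}|$ on the right by $\widehat{V_\varepsilon}+$ negligible error of the required form in $\mathcal{O}(\varepsilon,\delta,N)$.

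The key step, and the one I expect to be the main obstacle, is to apply Lemma \ref{h Lip estimate } with $W=\widehat{V_\varepsilon}$ to the factor containing $\widehat{h[\mu_N]-h[\mu]}$. The hypothesis $|\widehat{S}|\star\widehat{V_\varepsilon}\leq c\,\widehat{V_\varepsilon}$ is exactly Lemma \ref{bound on convolution} applied with $s=\widehat{S}$, whose decay and regularity are provided by $S\in\mathcal{S}(\mathbb{R}^d)$; the boundedness of $\widehat{\mu_N},\widehat{\mu}$ is automatic since $\mu_N,\mu$ are bounded positive measures of unit total mass. This yields
\[
\int \widehat{V_\varepsilon}(\xi)\,|\widehat{h[\mu_N]-h[\mu]}|^2\,d\xi \;\leq\; C\int \widehat{V_\varepsilon}(\xi)\,|\widehat{\mu_N-\mu}|^2\,d\xi,
\]
and the right-hand side equals $\int V_\varepsilon(x-y)(\mu_N-\mu)^{\otimes 2}(dxdy)$. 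Removing the diagonal again (error $\overline{M}\|V_\varepsilon\|_\infty/N$) and reversing the $V\leftrightarrow V_\varepsilon$ swap via Lemma \ref{Regularization } bounds this by $C(\mathcal{E}_N(\mu,\mu_N)+\mathcal{O}(\varepsilon,\delta,N))$, the same argument run in step 1 of the proof of Lemma \ref{Bound on the non diag terms}. Combining both factors in the Cauchy--Schwarz produces the claimed inequality.
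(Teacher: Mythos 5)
Your overall architecture — split into short-range $(1-\zeta_\delta)V$ and long-range $\zeta_\delta V$ contributions, bound the former via the off-diagonal estimate \eqref{2nd identity} plus the $L^\infty$ convolution bound \eqref{INFINITY NORM OF TRUN}, and push the latter to Fourier after swapping $V$ for $V_\varepsilon$ — reproduces the skeleton of the paper's argument, and your treatment of the near-field piece $J_\delta$ is essentially correct. But there is a genuine gap in the far-field step.

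You claim you can replace $V$ by $V_\varepsilon$ inside $\int \zeta_\delta V (\mu_N-\mu)(x)(h[\mu_N]-h[\mu])(y)$ at a cost controlled by Lemma \ref{Regularization }-iii. That lemma bounds $\|\zeta_\delta(V_\varepsilon - V)\|_{L^p+L^q}$, and this norm controls the error only through Young's inequality, i.e.\ only after convolving $\zeta_\delta(V_\varepsilon - V)$ against a function of bounded density. Expanding $(\mu_N-\mu)\otimes(h[\mu_N]-h[\mu])$, the cross-terms with at least one factor of $\mu$ or $h[\mu]$ are controlled this way; but the purely discrete term $\int \zeta_\delta(V - V_\varepsilon)\,\mu_N(x)\,h[\mu_N](y)$ is a double sum $\frac{1}{N^2}\sum_{i,j} m_i m_j\,\zeta_\delta(x_i-x_j)(V-V_\varepsilon)(x_i-x_j)(S\star\mu_N)(x_j)$ whose control requires a \emph{pointwise} bound on $\zeta_\delta(V-V_\varepsilon)$, not an $L^p+L^q$ bound. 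No such pointwise estimate is available: Lemma \ref{Regularization }-iv only gives the one-sided bound $V_\varepsilon - V\leq \varepsilon$, while $\zeta_\delta(V-V_\varepsilon)$ can be as large as $\delta^{-k}$. The one-sided bound is usable only when paired with a factor of known sign, and $S\star\mu_N$ is not sign-definite. This is precisely why the paper introduces the decomposition into $I^1_{\delta,\varepsilon}$, $I^2_{\delta,\varepsilon}$, $I^3_{\delta,\varepsilon}$ built around the nonnegative measures $\|S\|_\infty\mu_N-h[\mu_N]$ and $\|S\|_\infty\mu - h[\mu]$: pairing $V_\varepsilon - V \leq \varepsilon$ with a nonnegative measure closes $I^1$ with an $O(\varepsilon)$ error, and the residual $\|S\|_\infty$-piece produces $I^3 = \|S\|_\infty\int\zeta_\delta(V-V_\varepsilon)(\mu_N-\mu)^{\otimes 2}$, a symmetric $(\mu_N-\mu)^{\otimes 2}$-term that can be handled via the Fourier-positivity machinery of Lemma \ref{Bound on the non diag terms}.

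A secondary imprecision: when invoking Lemma \ref{h Lip estimate }, you write that the convolution hypothesis $|\widehat S|\star \widehat{V_\varepsilon}\leq c\,\widehat{V_\varepsilon}$ is "exactly" Lemma \ref{bound on convolution}. That lemma actually yields $|\widehat S|\star\widehat{V_\varepsilon}\leq C_1\widehat{V_\varepsilon}+C_2\,o_\varepsilon(1)(1+|\xi|)^{-\sigma/2}$, so the weight you must take in Lemma \ref{h Lip estimate } is the augmented $W=\widehat{V_\varepsilon}+\delta^{d-k-\sigma/2}o_{\varepsilon\delta^{-1}}(1)(1+|\cdot|)^{-\sigma/2}$, not $\widehat{V_\varepsilon}$ alone; this is minor and would be straightforward to repair, unlike the main gap above.
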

\textit{Proof}. Let $\zeta_{\delta}$ be a function as in Lemma \ref{Bound on the non diag terms}. We split the integral into a near and far parts from
the origin as follows 
\begin{align*}
\mathcal{D}^2_{N}(\mu)
=&\int_{\mathbb{R}^{d}\times\mathbb{R}^{d}}(\zeta_{\delta}V)(x-y)\left(\mu_{N}-\mu\right)(x)\left(h\left[\mu_{N}\right]-h\left[\mu\right]\right)(y)dxdy\\
&+\int_{x\neq y}((1-\zeta_{\delta})V)(x-y)\left(\mu_{N}-\mu\right)(x)\left(h\left[\mu_{N}\right]-h\left[\mu\right]\right)(y)dxdy\coloneqq I_{\delta}+J_{\delta}.
\end{align*}
We denote by $\lesssim$ inequality up to a constant depending on $d,k,p,q, \left\Vert S \right\Vert_{\infty},\left\Vert \mu \right\Vert_{\infty}$. 

\textbf{Step 1}\textit{. The integral} $J_{\delta}$. We expand the
integral as 
\begin{align*}
J_{\delta}=&\int_{x\neq y} ((1-\zeta_{\delta})V)(x-y)\mu_{N}(x)h\left[\mu_{N}\right](y)dxdy-\frac{1}{N}\stackrel[i=1]{N}{\sum}m_{i}(\left((1-\zeta_{\delta})V)\right)\star\mu)(x_{i})\,(S\star\mu_{N})(x_{i})\\
&-\frac{1}{N}\stackrel[i=1]{N}{\sum}m_{i}(\left((1-\zeta_{\delta})V)\right)\star h\left[\mu\right])(x_{i})+\int_{\mathbb{R}^d}\mu(x)(((1-\zeta_{\delta})V)\star h\left[\mu\right])(x)dx\coloneqq\stackrel[i=1]{4}{\sum}J_{\delta}^{i}.
\end{align*}
Owing to Lemma \ref{Bound on the non diag terms}, we have 
\begin{equation*}
\left|J_{\delta}^{1}\right|\leq\left\Vert S\right\Vert _{\infty}\frac{1}{N^{2}}\underset{i\neq j}{\sum}m_{i}m_{j} (1-\zeta_{\delta})(x_{i}-x_{j})V(x_{i}-x_{j})\leq C\left(\mathcal{E}_{N}(\mu,\mu_{N})+\mathcal{O}(\varepsilon,\delta,N)\right).
\end{equation*}
Next, using similar computations as in \eqref{comphomog}, we deduce the following elementary bound
\begin{equation}
\left\Vert ((1-\zeta_{\delta})V)\star\mu\right\Vert _{\infty}\leq\left\Vert (1-\zeta_{\delta})V\right\Vert _{1}\left\Vert \mu\right\Vert _{\infty}\lesssim \delta^{d-k},\label{eq:-2}
\end{equation}
which together with the assumption (\textbf{H1}) directly implies that
$\left|J_{\delta}^{2}\right|\lesssim \delta^{d-k}.$
Similarly, we have that
\begin{equation*}
\left\Vert ((1-\zeta_{\delta})V)\star h\left[\mu\right]\right\Vert _{\infty}\leq\left\Vert (1-\zeta_{\delta})V\right\Vert _{1}\left\Vert h\left[\mu\right]\right\Vert _{\infty}\lesssim \delta^{d-k},
\end{equation*}
yielding that 
$\left|J_{\delta}^{3}\right|\lesssim \delta^{d-k}$ and $\left|J_{\delta}^{4}\right|\lesssim \delta^{d-k}$.
Collecting these estimates, we find 
\begin{equation}
J_{\delta}\lesssim \mathcal{E}_{N}(\mu,\mu_{N})+\mathcal{O}(\varepsilon,\delta,N)+\delta^{d-k}. \label{Estimate J_{delta}}    
\end{equation}
To estimate $I_{\delta}$, we need to use the regularization $V_{\varepsilon}$ constructed in Lemma \ref{Regularization }. Note the identity 
\begin{align*}
I_\delta
=&\int_{\mathbb{R}^{d}\times\mathbb{R}^{d}}\zeta_{\delta}(x-y)(V-V_{\varepsilon})(x-y)\left(\mu_{N}-\mu\right)(x)\left((h\left[\mu_{N}\right]-\left\Vert S\right\Vert _{\infty}\mu_{N})-(h\left[\mu\right]-\left\Vert S\right\Vert _{\infty}\mu)\right)(y)dxdy\\
&+\int_{\mathbb{R}^{d}\times\mathbb{R}^{d}}\zeta_{\delta}(x-y)V_{\varepsilon}(x-y)\left(\mu_{N}-\mu\right)(x)\left(h\left[\mu_{N}\right]-h\left[\mu\right]\right)(y)dxdy\\
&+\left\Vert S\right\Vert _{\infty}\int_{\mathbb{R}^{d}\times\mathbb{R}^{d}}\zeta_{\delta}(x-y)(V-V_{\varepsilon})(x-y)\left(\mu_{N}-\mu\right)^{\otimes2}(dxdy)\coloneqq I_{\delta,\varepsilon}^{1}+I_{\delta,\varepsilon}^{2}+I_{\delta,\varepsilon}^{3}.
\end{align*}
In what follows, we separately estimate $I_{\delta,\varepsilon}^{1},I_{\delta,\varepsilon}^{2},I_{\delta,\varepsilon}^{3}$. 

\vskip 12pt

\textbf{Step 2}. \textit{The integral $I^{1}_{\delta,\varepsilon}$}. The term $I^1_{\delta,\varepsilon}$ is recast as 
\begin{equation}
\begin{aligned}
I_{\delta,\varepsilon}^{1}=&\int_{\mathbb{R}^{d}\times\mathbb{R}^{d}}\zeta_{\delta}(x-y)(V_{\varepsilon}-V)(x-y)\mu_{N}(x)(\left\Vert S\right\Vert _{\infty}\mu_{N}-h\left[\mu_{N}\right])(y)dxdy\\
&+\frac{1}{N}\stackrel[i=1]{N}{\sum}m_{i}((\zeta_{\delta}(V_{\varepsilon}-V))\star(h\left[\mu\right]-\left\Vert S\right\Vert _{\infty}\mu))(x_{i})\\
&+\frac{1}{N}\stackrel[i=1]{N}{\sum}m_{i}((\zeta_{\delta}(V_{\varepsilon}-V))\star\mu)(x_{i})(\left\Vert S\right\Vert _{\infty}-(S\star\mu_{N})(x_{i}))\\  
&+\int_{\mathbb{R}^{d}}((\zeta_{\delta}(V_{\varepsilon}-V))\star\mu)(x)(\left\Vert S\right\Vert _{\infty}\mu-h\left[\mu\right])(x)dx. \   
\end{aligned}
\label{I^1 decom}
\end{equation}
As $\left\Vert S \right\Vert_{\infty}\mu_{N}-h[\mu_{N}]$ is a non-negative measure, by Lemma \ref{Regularization }-iv, the 1st term in the
right-hand side of \eqref{I^1 decom} is bounded by
\[
\varepsilon\int_{\mathbb{R}^d}\zeta_{\delta}(x-y)\mu_{N}(x)(\left\Vert S\right\Vert _{\infty}\mu_{N}-h\left[\mu_{N}\right])dxdy\leq2\left\Vert S\right\Vert _{\infty}\varepsilon.
\]
Utilizing Lemma \ref{Regularization }-iii. we see that the 2nd term in the right-hand side of (\ref{I^1 decom}) is less than
\begin{align*}
&\left\Vert (\zeta_{\delta}(V_{\varepsilon}-V))\star(h\left[\mu\right]-\left\Vert S\right\Vert _{\infty}\mu)\right\Vert _{\infty}\\
&\leq\left\Vert \zeta_{\delta}(V_{\varepsilon}-V)\right\Vert _{L^{p}+L^{q}}\left\Vert h\left[\mu\right]-\left\Vert S\right\Vert _{\infty}\mu\right\Vert _{\infty}\leq 2C\left\Vert \mu\right\Vert _{\infty}\left\Vert S\right\Vert _{\infty}\left(\frac{\varepsilon}{\delta^{k+1-\frac{d}{q}}}+\delta^{\frac{d}{p}-k}\right).
\end{align*}
By the same token, the 3rd term in the right hand side of (\ref{I^1 decom}) is bounded by
\[
2\left\Vert S\right\Vert _{\infty}\left\Vert \zeta_{\delta}(V_{\varepsilon}-V)\star\mu\right\Vert _{\infty}\leq2\left\Vert S\right\Vert _{\infty}\left\Vert \mu\right\Vert _{\infty}\left\Vert \zeta_{\delta}(V_{\varepsilon}-V)\right\Vert _{L^{p}+L^{q}}\leq 2C\left\Vert \mu\right\Vert _{\infty}\left\Vert S\right\Vert _{\infty}\left(\frac{\varepsilon}{\delta^{k+1-\frac{d}{q}}}+\delta^{\frac{d}{p}-k}\right),
\]
and the 4th term is dominated by
\[
2\left\Vert S\right\Vert _{\infty}\left\Vert \mu\right\Vert _{\infty}\left\Vert \zeta_{\delta}(V_{\varepsilon}-V)\right\Vert _{L^{p}+L^{q}}\leq 2C\left\Vert \mu\right\Vert _{\infty}\left\Vert S\right\Vert _{\infty}\left(\frac{\varepsilon}{\delta^{k+1-\frac{d}{q}}}+\delta^{\frac{d}{p}-k}\right).
\]
Gathering our inequalities, we find that 
\begin{equation*}
I_{\delta,\varepsilon}^{1}\leq C\left(\frac{\varepsilon}{\delta^{k+1-\frac{d}{q}}}+\delta^{\frac{d}{p}-k}\right)   
\end{equation*}
where $C= C\left(\left\Vert S\right\Vert _{\infty},\left\Vert \mu\right\Vert _{\infty},d,k,p,q\right)$.

\vskip 12pt

\textbf{Step 3}. \textit{The integral $I^{2}_{\delta,\varepsilon}$}. Note first that 
\begin{align*}
I_{\delta,\varepsilon}^{2}&=\int_{\mathbb{R}^{d}}\zeta_{\delta}V_{\varepsilon}(x-y)\left(\mu_{N}-\mu\right)(x)\left(h\left[\mu_{N}\right]-h\left[\mu\right]\right)(y)dxdy\\
&=\int_{\mathbb{R}^{d}}\widehat{\zeta_{\delta}V_{\varepsilon}}(x)\left(\widehat{\mu_{N}-\mu}\right)(x)\left(\widehat{h\left[\mu_{N}\right]-h\left[\mu\right]}\right)(x)dx.
\end{align*}
Therefore, the Cauchy-Schwarz inequality  yields   
\begin{align}
I_{\delta,\varepsilon}^{2}&\leq\int_{\mathbb{R}^{d}}\left\vert \widehat{\zeta_{\delta}V_{\varepsilon}}\right\vert \left|\widehat{\mu_{N}-\mu}\right|^{2}(x)dx+\int_{\mathbb{R}^{d}}\left\vert \widehat{\zeta_{\delta}V_{\varepsilon}}\right\vert \left|\widehat{h\left[\mu_{N}\right]-h\left[\mu\right]}\right|^{2}(x)dx . \label{h term} 
\end{align}
Now, we want to apply Lemma \ref{h Lip estimate } to the second term on the right-hand side of \eqref{h term}. The first two assumptions are satisfied by the choice of $S$ in {\bf (H1)}. We are reduced to check the third assumption in Lemma \ref{h Lip estimate }. By Corollary \ref{cor FT} it holds that 
\begin{equation}
\left\vert \widehat{\zeta_{\delta}V_{\varepsilon}}\right\vert(\xi) 
\lesssim \widehat{V_{\varepsilon}}(\xi)+\frac{\delta^{d-k-\frac{\sigma}{2}}o_{\varepsilon\delta^{-1}}(1)}{(1+\left|\xi\right|)^{\frac{\sigma}{2}}}.    \label{reminder of corollary 4.1}    
\end{equation}
for $\sigma > 2d$. 
By Lemma \ref{bound on convolution} we have 
\begin{align*}
\left\vert \widehat{S}\right\vert\star \widehat{V}_{\varepsilon}(x)\lesssim \widehat{V}_{\varepsilon}(x)+\frac{o_{\varepsilon}(1)}{(1+\left\vert x \right\vert)^{\frac{\sigma}{2}}},   
\end{align*}
while by the assumption that $S\in \mathcal{S}(\mathbb{R}^{d})$ we have 
\begin{align*}
\left\vert \widehat{S}\right\vert\star \left(\frac{1}{(1+\left\vert \cdot\right\vert)^{\frac{\sigma}{2}}}\right)\lesssim \frac{1}{(1+\left\vert x \right\vert)^{\frac{\sigma}{2}}}.     
\end{align*}
Thus, we conclude that
\begin{align*}
\left\vert \widehat{S}\right\vert \star \left(\widehat{V}_{\varepsilon}+\frac{o_{\varepsilon \delta^{-1}}(1)}{\left(1+\left\vert \cdot\right\vert \right)^{\frac{\sigma}{2}}}\right)(x)\lesssim \widehat{V}_{\varepsilon}(x)+\frac{\delta^{d-k-\frac{\sigma}{2}}o_{\varepsilon\delta^{-1}}(1)}{\left(1+\left\vert x\right\vert \right)^{\frac{\sigma}{2}}}
\end{align*}
leading to the desired convolution inequality in Lemma \ref{h Lip estimate }. So, we are now entitled to apply Lemma \ref{h Lip estimate } with $W(x)=\widehat{V}_{\varepsilon}(x)+\frac{\delta^{d-k-\frac{\sigma}{2}}o_{\varepsilon\delta^{-1}}(1)}{\left(1+\left\vert x\right\vert \right)^{\frac{\sigma}{2}}}$ to obtain that 
\begin{align*}
\int_{\mathbb{R}^{d}}\left\vert \widehat{\zeta_{\delta}V_{\varepsilon}}\right\vert \left|\widehat{h\left[\mu_{N}\right]-h\left[\mu\right]}\right|^{2}(x)dx\leq C\int_{\mathbb{R}^{d}}\left(\widehat{V}_{\varepsilon}(x)+\frac{\delta^{d-k-\frac{\sigma}{2}}o_{\varepsilon\delta^{-1}}(1)}{(1+\left\vert x\right\vert)^{\frac{\sigma}{2}}}\right)\left\vert \widehat{\mu_{N}-\mu}\right\vert^{2}(x)dx,     
\end{align*}
which proves that 
\begin{align*}
I_{\delta,\varepsilon}^{2}\lesssim \int_{\mathbb{R}^{d}}\left(\widehat{V}_{\varepsilon}(x)+\frac{\delta^{d-k-\frac{\sigma}{2}}o_{\varepsilon\delta^{-1}}(1)}{(1+\left\vert x\right\vert)^{\frac{\sigma}{2}}}\right)\left\vert \widehat{\mu_{N}-\mu}\right\vert^{2}(x)dx. \end{align*}
Hence, similar to Step 2 in Lemma \ref{Bound on the non diag terms}, we arrive at the estimate 
\begin{align}
I_{\delta,\varepsilon}^{2}&\lesssim \int_{x\neq y}V_{\varepsilon}(x-y)\left(\mu_{N}-\mu\right)^{\otimes2}(dxdy)+\frac{\overline{M}\left\Vert V_{\varepsilon}\right\Vert _{\infty}}{N}+\delta^{d-k-\frac{\sigma}{2}}o_{\varepsilon\delta^{-1}}(1) \notag \\
&\lesssim\int_{x\neq y}\left(V_{\varepsilon}(x-y)-V(x-y)\right)\left(\mu_{N}-\mu\right)^{\otimes2}(dxdy)+\mathcal{E}_{N}(\mu,\mu_{N})+\frac{\overline{M}\left\Vert V_{\varepsilon}\right\Vert _{\infty}}{N}+\delta^{d-k-\frac{\sigma}{2}}o_{\varepsilon\delta^{-1}}(1).    \label{inter I^2}
\end{align}
To finish, we estimate the first integral in the right-hand side of  \eqref{inter I^2}, which
is achieved by similar considerations to those in Step 2. Expanding the terms $\left(\mu_{N}-\mu\right)^{\otimes2}(dxdy)$ and using the fourth and the second statements in Lemma \ref{Regularization }, we have 
$$
\frac{1}{N^{2}}\underset{i\neq j}{\sum}m_{i}m_{j}\left(V_{\varepsilon}(x_{i}-x_{j})-V(x_{i}-x_{j})\right)\leq\varepsilon,    
$$
$$
\left|\frac{2}{N}\stackrel[i=1]{N}{\sum}m_{i}(V_{\varepsilon}-V)\star\mu(x_{i})\right|\leq\frac{2}{N}\stackrel[i=1]{N}{\sum}\left\Vert (V_{\varepsilon}-V)\star\mu\right\Vert _{\infty}\leq \left\Vert \mu \right\Vert_{\infty}\left\Vert V_{\varepsilon}-V\right\Vert _{L^{p}+L^{q}}\lesssim\eta(\varepsilon), 
$$
and 
\[
\int_{\mathbb{R}^{d}}(V_{\varepsilon}-V)\star\mu(x)\mu(x)dx\leq\left\Vert (V_{\varepsilon}-V)\star\mu\right\Vert _{\infty}\leq\left\Vert \mu \right\Vert_{\infty}\left\Vert V_{\varepsilon}-V\right\Vert _{L^{p}+L^{q}}\lesssim\eta(\varepsilon).
\]
Collecting terms implies that
\begin{equation}
\int_{x\neq y}(V_{\varepsilon}-V)(x-y)\left(\mu_{N}-\mu\right)^{\otimes2}(dxdy)\lesssim \varepsilon+\eta(\varepsilon).\label{eq:-11}
\end{equation}
Thus, \eqref{inter I^2} and \eqref{eq:-11} entail 
\begin{equation*}
I_{\delta,\varepsilon}^{2}\leq C\left(\mathcal{E}_{N}(\mu,\mu_{N})+\frac{\overline{M}\left\Vert V_{\varepsilon}\right\Vert _{\infty}}{N}+\delta^{d-k-\frac{\sigma}{2}}o_{\varepsilon\delta^{-1}}(1)\right), 
\end{equation*}
where 
$
C= C\left(\left\Vert S\right\Vert _{\infty},\left\Vert \mu\right\Vert _{\infty},d,k,p,q\right).    
$

\vskip 12pt

\textbf{Step 4}. We immediately have that 
\begin{align*}
I_{\delta,\varepsilon}^{3}=&\left\Vert S\right\Vert _{\infty}\int_{\mathbb{R}^{d}\times\mathbb{R}^{d}}\zeta_{\delta}(x-y)V(x-y)\left(\mu_{N}-\mu\right)^{\otimes2}(dxdy)\\
&-\left\Vert S\right\Vert _{\infty}\int_{\mathbb{R}^{d}\times\mathbb{R}^{d}}\zeta_{\delta}(x-y)V_{\varepsilon}(x-y)\left(\mu_{N}-\mu\right)^{\otimes2}(dxdy).
\end{align*}
First, note that 
\begin{align*}
\int_{\mathbb{R}^{d}\times\mathbb{R}^{d}}\zeta_{\delta}(x-y)V(x-y)\left(\mu_{N}-\mu\right)^{\otimes2}(dxdy)=& \int_{x\neq y}\zeta_{\delta}(x-y)V(x-y)\left(\mu_{N}-\mu\right)^{\otimes2}(dxdy) \notag\\
=&\ \mathcal{E}_{N}(\mu,\mu_{N})-\frac{1}{N^{2}}\underset{i\neq j}{\sum}m_{i}m_{j}((1-\zeta_{\delta})V)(x_{i}-x_{j}) \notag\\
&-\int_{\mathbb{R}^{d}}\mu(x)(((1-\zeta_{\delta})V)\star\mu)(x)dx \notag\\
&-\frac{1}{N}\stackrel[i=1]{N}{\sum}m_{i}(((1-\zeta_{\delta})V)\star\mu)(x_{i}) \notag\\
\leq& C\left(\mathcal{E}_{N}(\mu,\mu_{N})+\mathcal{O}(\varepsilon,\delta,N)+\delta^{d-k}\right), 
\end{align*}
where the last inequality uses Lemma \ref{Bound on the non diag terms} to estimate the second term and the observation that $\left\Vert (1-\zeta_{\delta}) V\right\Vert_{1}\lesssim \delta^{d-k}$ as in \eqref{eq:-2}. Second, by Corollary \ref{cor FT} it holds that 
\begin{align*}
\left\vert \widehat{\zeta_{\delta}V_{\varepsilon}}\right\vert 
\lesssim \widehat{V_{\varepsilon}}+\frac{\delta^{d-k-\frac{\sigma}{2}}o_{\varepsilon \delta^{-1}}(1)}{(1+\left|\xi\right|)^{\frac{\sigma}{2}}},    
\end{align*}
so that we get 
\begin{align*}
\left|\int_{\mathbb{R}^d}\zeta_{\delta}V_{\varepsilon}(x-y)\left(\mu_{N}-\mu\right)^{\otimes2}(dxdy)\right|&=\left|\int_{\mathbb{R}^{d}}\widehat{\zeta_{\delta}V_{\varepsilon}}(x)\left|\widehat{\mu_{N}-\mu}\right|^{2}(x)dx\right|\\
&\lesssim \int_{\mathbb{R}^{d}}\widehat{V_{\varepsilon}}(x)\left|\widehat{\mu_{N}-\mu}\right|^{2}(x)dx+\delta^{d-k-\frac{\sigma}{2}}o_{\varepsilon\delta^{-1}}(1)    .
\end{align*}
By \eqref{eq:-11} of step 3, it follows that 
\begin{align*}
\left|\int_{\mathbb{R}^d}\zeta_{\delta}V_{\varepsilon}(x-y)\left(\mu_{N}-\mu\right)^{\otimes2}(dxdy)\right|\lesssim &\int_{x \neq y}(V_{\varepsilon}-V)(x-y)(\mu_{N}-\mu)^{\otimes 2}(dxdy)\\
&+\frac{\overline{M}\left\Vert V_{\varepsilon}\right\Vert _{\infty}}{N}+\mathcal{E}_{N}(\mu,\mu_{N})+\delta^{d-k-\frac{\sigma}{2}}o_{\varepsilon\delta^{-1}}(1)\\
\lesssim&  
 \mathcal{E}_{N}(\mu,\mu_{N})+\varepsilon+\eta(\varepsilon)+\frac{\overline{M}\left\Vert V_{\varepsilon}\right\Vert _{\infty}}{N}+\delta^{d-k-\frac{\sigma}{2}}o_{\varepsilon \delta^{-1}}(1).    
\end{align*}
This proves that 
\begin{equation*}
I_{\delta,\varepsilon}^{3}\lesssim \mathcal{\mathcal{E}}_{N}(\mu,\mu_{N})+\varepsilon+\eta(\varepsilon)+\frac{\overline{M}\left\Vert V_{\varepsilon}\right\Vert _{\infty}}{N}+\delta^{d-k-\frac{\sigma}{2}}o_{\varepsilon \delta^{-1}}(1)+\delta^{d-k}+\mathcal{O}(\varepsilon,\delta,N).
\end{equation*}
Collecting all previous terms $I_{\delta,\varepsilon}^{1}$, $I_{\delta,\varepsilon}^{2}$ and $I_{\delta,\varepsilon}^{3}$, we find that 
\begin{align*}
I_{\delta}&\lesssim \mathcal{\mathcal{E}}_{N}(\mu,\mu_{N})+\varepsilon+\eta(\varepsilon)+\frac{\varepsilon}{\delta^{k+1-\frac{d}{q}}}+\delta^{\frac{d}{p}-k}+\frac{\overline{M}\left\Vert V_{\varepsilon}\right\Vert _{\infty}}{N}+\delta^{d-k-\frac{\sigma}{2}}o_{\varepsilon \delta^{-1}}(1)+\delta^{d-k}+\mathcal{O}(\varepsilon,\delta,N).
\end{align*}
Combining estimate \eqref{Estimate on I_{delta}} with \eqref{Estimate J_{delta}} and taking into account the definition of $\mathcal{O}(\varepsilon,\delta,N)$ in \eqref{aux57}, we deduce that
\begin{align}
\mathcal{D}^2_{N}(\mu)=I_{\delta}+J_{\delta}&\lesssim \mathcal{\mathcal{E}}_{N}(\mu,\mu_{N})+\eta(\varepsilon)+\mathcal{O}(\varepsilon,\delta,N),  
\label{Estimate on I_{delta}} \end{align}
since we will eventually take $\delta\to 0$, $p>1$ and $k-\frac{d}{q}>0$, and thus we conclude the announced result. 
\qed

\vskip 12pt

In order to prove the mean field limit, we need the asymptotic positivity, coercivity  inequalities and the commutator estimate, which are the fundamental discoveries in \cite{duerinckx2020mean}.

\begin{prop} \textup{(\cite[Corollary 3.5]{duerinckx2020mean})}
Let $V(x)=\frac{1}{\left\vert x \right\vert^{k}}$ where $0<k<d-1$ and let $\mu\in L^{\infty}(\mathbb{R}^{d})\cap \mathcal{P}(\mathbb{R}^{d})$. For any weights $\mathbf{m}_{N}\in \mathbb{M}^{N}$ with $m_{i}^{N}\leq \overline{M}, i=1,\dots,N$ and configurations $\mathbf{x}_{N}\in \mathbb{R}^{dN}\setminus \triangle_{N}$, there exist a constant $C=C(\overline{M},\left\Vert \mu\right\Vert_{\infty},k,d)$ such that the modulated energy is bounded below as
\begin{align*}
\mathcal{E}_{N}(\mu,\mu_{N}) \geq -C(1+\left\Vert \mu \right\Vert_{\infty})N^{\frac{k}{d}-1} 
\end{align*}
and thus, asymptotically non-negative.
\label{non positivity}
\end{prop}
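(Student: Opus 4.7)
The plan is to use the smearing technique of Serfaty \cite{duerinckx2020mean}, as foreshadowed by the identity \eqref{Representation}, adapted to the weighted empirical measure. For a radius $\eta > 0$ to be optimized later, set
\[
\mu_{N}^{(\eta)} := \frac{1}{N}\sum_{i=1}^{N} m_{i}\,\delta_{x_{i}}^{(\eta)},
\]
where $\delta_{x_{i}}^{(\eta)}$ is the uniform probability measure on $\partial B(x_{i},\eta)$. The first step is to establish a weighted version of \eqref{Representation}: namely
\[
\mathcal{E}_{N}(\mu,\mu_{N}) \;=\; \int_{\mathbb{R}^{d}\times\mathbb{R}^{d}} V(x-y)\bigl(\mu_{N}^{(\eta)}-\mu\bigr)^{\otimes 2}(dxdy) \;-\; \frac{1}{N^{2}}\sum_{i=1}^{N} m_{i}^{2}\,V(\eta) \;+\; R(\eta),
\]
where $R(\eta)$ absorbs the corrections coming from pairs of particles lying closer than $2\eta$. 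This identity relies on Newton's theorem: the spherical smearing satisfies $(V\star\delta_{x}^{(\eta)})(y) = V(x-y)$ for $|x-y|>\eta$, $(V\star\delta_{x}^{(\eta)})(y) = V(\eta)$ for $|x-y|\leq\eta$, and the self-interaction $\int V\,\delta_{x}^{(\eta)}\otimes\delta_{x}^{(\eta)} = V(\eta)$.

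Next I would estimate the three contributions separately. The first integral is manifestly non-negative, since $\widehat V(\xi) = c_{d}|\xi|^{-(d-k)}\geq 0$ shows by Plancherel that
\[
\int_{\mathbb{R}^{d}\times\mathbb{R}^{d}} V(x-y)\bigl(\mu_{N}^{(\eta)}-\mu\bigr)^{\otimes 2}(dxdy) \;=\; c_{d}\int_{\mathbb{R}^{d}}\frac{\bigl|\widehat{\mu_{N}^{(\eta)}}-\widehat{\mu}\bigr|^{2}(\xi)}{|\xi|^{d-k}}\,d\xi \;\geq\; 0.
\]
For the self-energy, the uniform bound $m_{i}\leq \overline{M}$ together with $\sum_{i} m_{i} = N$ yields
\[
\frac{1}{N^{2}}\sum_{i=1}^{N} m_{i}^{2}\,V(\eta) \;\leq\; \frac{\overline{M}}{N}\eta^{-k}.
\]
For the remainder $R(\eta)$, the short-range analysis of \cite{duerinckx2020mean} bounds the close-pair contributions by $C(\overline{M},\|\mu\|_{\infty},k,d)\,\eta^{d-k}$, exploiting that the mass of $\mu$ in any ball of radius $\eta$ is at most $\|\mu\|_{\infty}\eta^{d}$ and that the number of close pairs is controlled by the weights.

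The conclusion follows by choosing $\eta = N^{-1/d}$, which balances the two negative contributions: both $N^{-1}\eta^{-k}$ and $\eta^{d-k}$ are of order $N^{k/d - 1}$, giving the announced lower bound. The main obstacle is the precise verification of the smearing identity and the control of $R(\eta)$ in the weighted setting. This requires careful tracking of factors of $\overline{M}$ throughout the arguments of \cite[Corollary 3.5]{duerinckx2020mean}, which is nevertheless tractable since the uniform bound $m_{i}\leq\overline{M}$ entails that each weighted contribution is bounded by $\overline{M}$ times its unweighted counterpart. This absorption of the weights into the constant is the essential reason the result extends from the uniform case $m_{i}\equiv 1$ to the present time-varying setting.
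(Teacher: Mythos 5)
Your argument is the Duerinckx--Serfaty smearing route with the weights carried along: replace each Dirac $\delta_{x_i}$ by $\delta_{x_i}^{(\eta)}$, use Newton's theorem and $\widehat V\ge 0$ to see that the smeared interaction energy is non-negative, bound the removed self-interaction using $\sum_i m_i^2\le\overline{M}\sum_i m_i=\overline{M}N$ (the only place the weights enter), control the smoothing error by $\|\mu\|_\infty\eta^{d-k}$, and optimize at $\eta=N^{-1/d}$. This matches the first of the two routes the paper identifies in its closing remark: a careful re-examination of the proof of \cite[Corollary 3.5]{duerinckx2020mean}, in which every estimate is merely scaled by at most $\overline{M}$. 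The paper also offers a genuinely different, self-contained alternative: inequality~\eqref{1st identity} of Lemma~\ref{Bound on the non diag terms} replaces the smearing of the measure by a Bresch--Jabin--Wang regularization $V_\varepsilon$ of the kernel (with $\widehat{V_\varepsilon}\ge 0$ and $V_\varepsilon\le V+\varepsilon$ from Lemma~\ref{Regularization }), and the lower bound follows directly by choosing $\varepsilon,\delta$ to balance $\overline{M}\|V_\varepsilon\|_\infty/N$ against $\varepsilon\delta^{-(k+1-d/q)}+\delta^{d/p-k}$. The two routes are dual (regularize the measure versus regularize the kernel); the BJW one avoids the algebraic smearing identity at the cost of the kernel construction, which the paper needs anyway for the source-term commutator in Lemma~\ref{commutator estimate1}. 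One caveat on your sketch: Newton's theorem in the form you use, $(V\star\delta_x^{(\eta)})(y)=V(\eta)$ on $B(x,\eta)$ and $=V(x-y)$ off it, is special to the Coulomb exponent $k=d-2$; for general $0<k<d-1$ Duerinckx--Serfaty use a different truncation. Since hypothesis (\textbf{H1}) restricts to Coulomb this is harmless for the paper, but your argument as written covers a narrower range than the proposition states.
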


\begin{prop}
\textup{\cite[Proposition 3.6]{duerinckx2020mean}}
 Let  $V(x)=\frac{1}{\left\vert x \right\vert^{k}}$ where $0<k<d-1$ and let $\mu\in L^{\infty}(\mathbb{R}^{d})\cap \mathcal{P}(\mathbb{R}^{d})$. For any weights $\mathbf{m}_{N}\in \mathbb{M}^{N}$ with $m_{i}^{N}\leq \overline{M}, i=1,\dots,N$ and configurations $\mathbf{x}_{N}\in \mathbb{R}^{dN}\setminus \triangle_{N}$ and $0<\alpha\leq 1$ there is some $C=C(d,k,\overline{M})>0$ and $\lambda=\lambda(d,k)>0$ such that for any $\varphi\in C^{\infty}(\mathbb{R}^{d})$ it holds that 
 \begin{align*}
\int_{\mathbb{R}^{d}}\varphi(x)\left(\mu_{N}-\mu\right)(x)(dx)\leq CN^{-\lambda}\left\Vert \varphi\right\Vert_{C^{0,\alpha}} +C\left\Vert \varphi\right\Vert_{\dot{H}^{\frac{d-k}{2}}} \left(\mathcal{E}_N(\mu,\mu_{N})+C(1+\left\Vert \mu \right\Vert_{\infty})N^{\frac{k}{d}-1}\right)^\frac{1}{2}.
 \end{align*}
 \label{coercivity inequality}
\end{prop}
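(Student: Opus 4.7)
The plan is to follow Serfaty's smearing-and-Fourier strategy. Since $V(x)=|x|^{-k}$ is, up to a universal constant, the convolution kernel of $(-\Delta)^{-(d-k)/2}$ on $\mathbb{R}^d$, one has formally $\iint V(x-y)\,(\mu_N-\mu)^{\otimes 2}(dxdy)=c_{d,k}\|\mu_N-\mu\|_{\dot H^{-(d-k)/2}}^2$; only the diagonal singularity of $V$ obstructs this identification. The idea is to smear each Dirac $\delta_{x_i}$ to the uniform measure on a small sphere around $x_i$, then bound $\int\varphi(\mu_N-\mu)$ by pairing the smeared measure with $\varphi$ via Plancherel and Cauchy--Schwarz, while ensuring that smearing changes the energy only by controlled terms via Newton's theorem.

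\textbf{Smearing and splitting.} Set $r_i:=\tfrac14\min_{j\neq i}|x_i-x_j|$, fix a parameter $\eta_0\in(0,1)$ to be optimized, and put $\eta_i:=\min(r_i,\eta_0)$. Define the smeared empirical measure $\mu_N^{(\vec\eta)}:=\tfrac1N\sum_i m_i\delta_{x_i}^{(\eta_i)}$, where $\delta_{x_i}^{(\eta_i)}$ is the uniform measure of mass $1$ on $\partial B(x_i,\eta_i)$. Decompose
\begin{align*}
\int_{\mathbb{R}^d}\varphi\,(\mu_N-\mu)(dx)=\underbrace{\int\varphi\,(\mu_N-\mu_N^{(\vec\eta)})(dx)}_{=:A}+\underbrace{\int\varphi\,(\mu_N^{(\vec\eta)}-\mu)(dx)}_{=:B}.
\end{align*}
Since $\delta_{x_i}-\delta_{x_i}^{(\eta_i)}$ has zero total mass and support within distance $\eta_i\leq\eta_0$ of $x_i$, H\"older continuity of $\varphi$ gives $|A|\leq \overline{M}\,\|\varphi\|_{C^{0,\alpha}}\eta_0^{\alpha}$. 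For $B$, Plancherel and Cauchy--Schwarz yield
\begin{align*}
|B|\leq \|\varphi\|_{\dot H^{(d-k)/2}}\,\|\mu_N^{(\vec\eta)}-\mu\|_{\dot H^{-(d-k)/2}}.
\end{align*}

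\textbf{Newton's theorem and the link to $\mathcal{E}_N$.} Using $\widehat V(\xi)=c_{d,k}|\xi|^{-(d-k)}$, one rewrites
\begin{align*}
\|\mu_N^{(\vec\eta)}-\mu\|_{\dot H^{-(d-k)/2}}^2=c_{d,k}\iint V(x-y)\,(\mu_N^{(\vec\eta)}-\mu)^{\otimes 2}(dxdy).
\end{align*}
By Newton's theorem for the Riesz kernel $V\star\delta_{x_i}^{(\eta_i)}(y)=V(y-x_i)$ whenever $|y-x_i|\geq\eta_i$; since $\eta_i\leq r_i$ the spheres are pairwise disjoint, so all $i\neq j$ pair-interactions $V(x_i-x_j)$ are preserved by smearing. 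Only the diagonal contributes an extra $\frac{1}{N^2}\sum m_i^2V(\eta_i)\leq \overline{M}^2 N^{-1}\eta_0^{-k}$, and the cross $\mu_N^{(\vec\eta)}$--$\mu$ term differs from its un-smeared analog by $O(\|\mu\|_\infty\eta_0^{d-k})$. Hence
\begin{align*}
\|\mu_N^{(\vec\eta)}-\mu\|_{\dot H^{-(d-k)/2}}^2\leq C\left(\mathcal{E}_N(\mu,\mu_N)+\tfrac{\overline{M}^2}{N}\eta_0^{-k}+\|\mu\|_\infty\eta_0^{d-k}\right).
\end{align*}
Choosing $\eta_0=N^{-1/d}$ makes both error terms of order $N^{k/d-1}$, which, in view of the asymptotic positivity from Proposition \ref{non positivity}, can be absorbed into $\mathcal{E}_N(\mu,\mu_N)+C(1+\|\mu\|_\infty)N^{k/d-1}\geq 0$. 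Combined with $|A|\leq C\|\varphi\|_{C^{0,\alpha}}N^{-\alpha/d}$, this yields the announced estimate with a positive exponent $\lambda$ emerging from the optimization.

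\textbf{Main obstacle.} The delicate point is the Newton-theorem identity: one must carefully track what happens at indices where $r_i<\eta_0$ (so the actual smearing radius is $\eta_i=r_i$ rather than the uniform choice $\eta_0$), and control the cross-term error arising from replacing $\delta_{x_i}$ by $\delta_{x_i}^{(\eta_i)}$ when paired with the smooth density $\mu$ near $x_i$. Making sure every such error is at most of size $N^{k/d-1}$ -- rather than a worse power of $N$ -- is precisely what forces the scaling $\eta_0\sim N^{-1/d}$ and determines the admissible range for $\lambda$ in the final statement.
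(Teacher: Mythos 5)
The paper does not actually prove this statement: it is quoted verbatim as \cite[Proposition 3.6]{duerinckx2020mean}, and the only in-house remark is the observation at the very end of Section 5 that the result extends from uniform to general admissible weights, either by tracking the original proof or via Lemma \ref{Bound on the non diag terms}. Your proposal is an attempt to reconstruct the Duerinckx--Serfaty proof itself (spherical smearing plus Plancherel). The decomposition into $A$ and $B$, the $\eta_0^\alpha$ bound for $A$, and the Plancherel/Cauchy--Schwarz bound for $B$ are all fine, and the overall architecture is the right one.

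However, there is a genuine gap in the key step linking $\|\mu_N^{(\vec\eta)}-\mu\|_{\dot H^{-(d-k)/2}}^2$ to $\mathcal{E}_N(\mu,\mu_N)$. You assert that the diagonal contribution satisfies
\begin{equation*}
\frac{1}{N^2}\sum_i m_i^2 V(\eta_i)\ \leq\ \overline{M}^2\,N^{-1}\eta_0^{-k},
\end{equation*}
but by construction $\eta_i=\min(r_i,\eta_0)\leq\eta_0$, so $V(\eta_i)\geq V(\eta_0)$ and the inequality runs the wrong way. If a pair of particles is at distance $N^{-a}$ with $a>1/d$, then $V(\eta_i)\sim N^{ak}$ and the diagonal term is of size $N^{ak-1}$, which can be far larger than $N^{k/d-1}$. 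The absorption of $\frac{1}{N^2}\sum_i V(\eta_i)$ into $\mathcal{E}_N(\mu,\mu_N)+C N^{k/d-1}$ is precisely the hardest point of the Duerinckx--Serfaty argument: it relies on the observation that when $r_i$ is small, the corresponding close pair contributes a large positive amount to the off-diagonal energy, and one must pass through a monotonicity-in-$\eta$ lemma (or a careful comparison such as the representation \eqref{Representation} in the present paper) to trade these two quantities against each other. You flag this as the "main obstacle" but then state a bound that does not hold, so the proof as written does not close.

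A second, lesser, issue: the exact Newton identity $V\star\delta_{x_i}^{(\eta_i)}(y)=V(y-x_i)$ for $|y-x_i|\geq\eta_i$ holds only in the Coulomb case $k=d-2$. For $0<k<d-2$ one has the one-sided inequality $V\star\delta_{x_i}^{(\eta_i)}\leq V(\cdot-x_i)$ which is still usable (after carefully tracking signs), but for $d-2<k<d-1$ the kernel is subharmonic away from the origin and the inequality reverses, so the naive spherical smearing is not adequate on the full range $0<k<d-1$ claimed in the statement; Duerinckx--Serfaty handle that regime with a different construction (truncation / Caffarelli--Silvestre extension). If you only need the Coulomb case used in this paper, this second concern disappears, but the diagonal-term gap above remains.
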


\begin{thm} \textup{(\cite[Proposition 1.1]{duerinckx2020mean})}\label{commutator estimate} Let the assumptions of Proposition \ref{coercivity inequality} hold and assume further that $\mu\in C^{0,\alpha}(\mathbb{R}^{d})$ for some $\alpha \in (0,1)$ and that $u:\mathbb{R}^{d}\rightarrow \mathbb{R}^{d}$ is a Lipschitz vector field. Then
 \begin{align*}
\left\vert \int_{x\neq y}\!\!\!\!\!\!\!\left(u(x)-u(y)\right)\nabla V(x-y)\left(\mu_{N}-\mu\right)^{\otimes2}(dxdy)\right\vert\leq C\left(\int_{x\neq y}\!\!\!\!\!\!\!V(x-y)\left(\mu_{N}-\mu\right)^{\otimes2}(dxdy)+N^{-\beta}\right)
\end{align*}   
where $C=C\left(\overline{M},\left\Vert u \right\Vert_{W^{1,\infty}},\left\Vert \mu \right\Vert_{C^{0,\alpha}},d,k\right)$ and $\beta=\beta(k,d,\alpha)>0$.
\end{thm}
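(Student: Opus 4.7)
\textit{Proof proposal.} The plan, following the strategy of Serfaty, is to reinterpret the integrand in terms of the ``electric field''
\[
E_{\vec{\eta}}(x) \coloneqq \nabla V \star (\mu_{N}^{(\vec{\eta})} - \mu)(x), \qquad \mu_{N}^{(\vec{\eta})} \coloneqq \frac{1}{N}\stackrel[i=1]{N}{\sum}m_{i}\delta_{x_{i}}^{(\eta_{i})},
\]
with $\eta_{i} \leq r_{i} = \frac{1}{4}\min_{j \neq i}|x_{i}-x_{j}|$, and then to invoke the renormalized identity \eqref{Representation} in order to compare field energies to $\mathcal{E}_{N}(\mu,\mu_{N})$. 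I will present the argument for the Coulomb case $k = d-2$ used in this paper; for a general Riesz $V(x) = |x|^{-k}$ with $0<k<d-1$ the same scheme works via Caffarelli-Silvestre type extension to a half-space, replacing the Laplacian identities below by identities for a degenerate elliptic operator with weight $|y|^{\gamma}$.

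The first step is to symmetrize: since $\nabla V$ is odd, swapping $x$ and $y$ gives
\[
\int_{x\neq y}(u(x)-u(y))\nabla V(x-y)(\mu_{N}-\mu)^{\otimes 2}(dxdy) = 2\int_{x\neq y} u(x)\cdot \nabla V(x-y)(\mu_{N}-\mu)^{\otimes 2}(dxdy).
\]
One then replaces $\mu_{N}$ by $\mu_{N}^{(\vec{\eta})}$ throughout. The Lipschitz bound on $u$ keeps the integrand pointwise controlled by $\|u\|_{W^{1,\infty}}|x-y|^{-(d-2)}$ up to the diagonal, so the price of smearing is at most $C\|u\|_{W^{1,\infty}}\sum_{i}\eta_{i}^{2}/N$ plus a similarly small term from the smoothness of $u\cdot \nabla V \star \mu$ against $\mu$. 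One is then left with
\[
2\int_{\mathbb{R}^{d}} u(x)\cdot E_{\vec{\eta}}(x)\,d(\mu_{N}^{(\vec{\eta})}-\mu)(x),
\]
and since $-\mathrm{div}(E_{\vec{\eta}}) = \mu_{N}^{(\vec{\eta})} - \mu$ in the distributional sense (this is the defining property of Coulomb), integration by parts converts this into $-2\int u\cdot E_{\vec{\eta}}\,\mathrm{div}(E_{\vec{\eta}})\,dx$.

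The second step uses the algebraic ``stress-energy'' identity: $E_{\vec{\eta}}$ is a gradient, so $\partial_{i}(E_{\vec{\eta}})_{j} = \partial_{j}(E_{\vec{\eta}})_{i}$ and therefore
\[
E_{\vec{\eta}}\,\mathrm{div}(E_{\vec{\eta}}) = \mathrm{div}(E_{\vec{\eta}}\otimes E_{\vec{\eta}}) - \tfrac{1}{2}\nabla|E_{\vec{\eta}}|^{2}.
\]
A second integration by parts puts the commutator in the form $2\int \nabla u : (E_{\vec{\eta}}\otimes E_{\vec{\eta}} - \tfrac{1}{2}|E_{\vec{\eta}}|^{2}I)\,dx$, which is majorized by $C\|\nabla u\|_{\infty}\int |E_{\vec{\eta}}|^{2}\,dx$. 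Finally, Serfaty's renormalized identity \eqref{Representation} gives $\int |E_{\vec{\eta}}|^{2}\,dx = c_{d}\,\mathcal{E}_{N}(\mu,\mu_{N}) + \sum_{i}V(\eta_{i})/N + (\text{smearing error})$, where the last error is controlled by the H\"older regularity of $\mu$.

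The main obstacle is the optimal choice of the radii $\eta_{i}$. Picking $\eta_{i}$ too small blows up the self-energy $\sum V(\eta_{i})/N$, while picking it too large spoils the smearing replacement and the coercivity estimate available via the H\"older regularity of $\mu$. The standard balance is $\eta_{i}=\min\{r_{i},N^{-\gamma}\}$ for some $\gamma=\gamma(d,k,\alpha)>0$, combined with Proposition \ref{non positivity} to absorb the asymptotically negative correction $-c_{d}\sum V(\eta_{i})/N$ into an additive $N^{-\beta}$ error term. This produces the claimed bound, with $\beta$ determined by the exponents arising in the three competing errors (smearing, self-energy, and H\"older remainder).
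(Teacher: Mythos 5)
The paper does not prove this theorem: it cites \cite[Proposition~1.1]{duerinckx2020mean}, and the only thing it adds is a closing remark that the Duerinckx--Serfaty statements (written for $m_i\equiv 1$) extend to arbitrary weighted empirical measures $\mu_N=\tfrac1N\sum m_i\delta_{x_i}$, and that this extension can be obtained either by re-examining the Duerinckx--Serfaty proof, or alternatively via the paper's own Lemma~\ref{Bound on the non diag terms} combined with the Bresch--Jabin--Wang argument of \cite[Corollary~5.1]{bresch2019modulated}. Your proposal is an outline of the first of these two routes — the original electric-field/renormalization scheme of Serfaty and Duerinckx: smear the empirical measure to $\mu_N^{(\vec\eta)}$, pass to $E_{\vec\eta}=\nabla V\star(\mu_N^{(\vec\eta)}-\mu)$, integrate by parts against the stress--energy tensor $E\otimes E-\tfrac12|E|^2I$, use the renormalized identity~\eqref{Representation}, and tune the radii $\eta_i=\min\{r_i,N^{-\gamma}\}$. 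That is indeed how the cited result is proved and, modulo executing the exponent bookkeeping, your plan is sound. It does, however, differ in spirit from the route the paper itself equips the reader with: Lemma~\ref{Bound on the non diag terms} gives a purely Fourier/truncation-based bound on the near-diagonal part $\tfrac1{N^2}\sum_{i\neq j}m_im_j(1-\zeta_\delta)V(x_i-x_j)$ by the modulated energy, with no electric-field formulation or smearing required, and this is what the paper relies on elsewhere (e.g.\ in Lemma~\ref{commutator estimate1}) and recommends for the weighted extension. The electric-field route is sharper and more geometric; the truncation route is more elementary, does not require the algebraic identity~\eqref{Representation}, and integrates more cleanly with the source-term estimates of Section~5.

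One technical caution on your sketch. The line ``the price of smearing is at most $C\|u\|_{W^{1,\infty}}\sum_i\eta_i^2/N$'' undersells the delicate part of the argument: smearing introduces a new diagonal $i=j$ contribution $\int u(x)\cdot\nabla V(x-y)\,\delta_{x_i}^{(\eta_i)}(dx)\,\delta_{x_i}^{(\eta_i)}(dy)$ which is not controlled by $\eta_i^2/N$ alone, and the comparison between $\mathcal{E}_N(\mu,\mu_N)$ and $\int|E_{\vec\eta}|^2$ in~\eqref{Representation} carries its own error terms driven by the H\"older regularity of $\mu$ and by the monotonicity of the truncated interactions. These are genuinely handled in \cite{duerinckx2020mean}, but they are not a corollary of the Lipschitz bound on $u$; the exponent $\beta(d,k,\alpha)$ comes precisely from balancing these three families of errors, and your sketch should at least indicate that the near-diagonal self-interaction is absorbed into the renormalized energy via the truncation structure rather than being small a priori.
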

We are now in a good position to prove our main theorem.

\

\textit{Proof of Theorem \ref{main thm intro}}. By Lemma \ref{commutator estimate1} we have 
\begin{align*}
\mathcal{D}_{N}^{2}(t)
\lesssim \mathcal{E}_{N}(t)+\eta(\varepsilon)+\mathcal{O}(\varepsilon,\delta,N)   
\end{align*}
where $\mathcal{O}$ has the form 
\begin{align*}
\mathcal{O}(\varepsilon,\delta,N)=\frac{\varepsilon}{\delta^{k+1-\frac{d}{q}}}+\delta^{d-k-\frac{\sigma}{2}}o_{\varepsilon\delta^{-1}}(1)+\frac{\overline{M}\left\Vert V_{\varepsilon} \right\Vert_{\infty}}{N}+\delta^{\frac{d}{p}-k}.   
\end{align*} 
Note that $\left\Vert V_{\varepsilon}\right\Vert_{\infty}\leq \frac{1}{\Theta(\varepsilon)}$ for some $\Theta(\varepsilon)\rightarrow0$ as $\varepsilon \rightarrow 0$ due to Lemma \ref{Regularization }-v.
Choose $\varepsilon=\varepsilon(N)\to 0$ as $N\to\infty$ and choose $\delta=\delta(\varepsilon)$ such that 
$$
\max\left\{\eta(\varepsilon(N)), \frac1{\Theta(\varepsilon(N))N}, \frac{\varepsilon(N)}{\delta(\varepsilon(N))^{k+1-\frac{d}{q}}}\right\}\to 0\mbox{ as }N\rightarrow \infty \,.
$$
Then, we get 
\begin{align*}
\mathcal{D}_{N}^{2}(t)
\lesssim \mathcal{E}_{N}(t)+o_{N}(1), 
\end{align*}
since $p>1$ $k-\frac{d}{q}>0$.
By Theorem \ref{thm final existence} and the assumption $\mu_{0}\in W^{1,\infty}(\mathbb{R}^{d})$, we have $\mathbb{J}\nabla V \star \mu \in L^{\infty}([0,T],W^{1,\infty}(\mathbb{R}^d))$. Applying Theorem \ref{commutator estimate} with $u=\mathbb{J}\nabla V\star\mu$, we infer 
\begin{align*}
\mathcal{D}_{N}^{1}(t)\lesssim  \mathcal{E}_{N}(t)+N^{-\beta}
\end{align*}
where $C=C(\left\Vert\mu_{0}\right\Vert_{W^{1,\infty}},d,k)$. By Proposition \ref{Time derivative of energy} we conclude
\begin{align*}
\frac{d}{dt}\mathcal{E}_{N}(t)\leq \mathcal{D}_{N}^{1}(t)+\mathcal{D}_{N}^{2}(t)\leq C\mathcal{E}_{N}(t)+o_{N}(1).
\end{align*}
By Gr\"onwall's inequality and Proposition \ref{non positivity}, it follows that $\underset{t\in[0,T]} \sup{\mathcal{E}_{N}(t)}\underset{N\rightarrow \infty}{ \rightarrow} 0$, which by Proposition \ref{coercivity inequality} concludes the proof. 
\begin{flushright}
$\square$
\par\end{flushright}

\begin{rem}
Proposition \ref{non positivity}, Proposition \ref{coercivity inequality} and Theorem \ref{commutator estimate} in \cite{duerinckx2020mean} are all stated for the specific choice $\mathbf{m}_{N}=\left(1,\dots,1\right)$, but in fact hold for arbitrary convex combinations of Diracs, as can be deduced from a careful examination of the proof. Alternatively, this can be seen from   Lemma \ref{Bound on the non diag terms} which we proved for general weights:  Proposition \ref{non positivity} follows directly from this Lemma by choosing $\varepsilon$ and $\delta$ appropriately and Theorem \ref{commutator estimate} follows by this Lemma by exactly the same argument outlined in \cite[Corollary 5.1]{bresch2019modulated}. Those relations form a critical example of a larger family of functional inequalities controlling various quantities by the modulated energy. Those inequalities could often be proved in a straightforward if the empirical measure was replaced by a smooth function but cannot make for general measures, which is why we constantly need to remove the diagonal here. Roughly speaking Lemma~\ref{Bound on the non diag terms} allows to control a neighborhood of the diagonal in the modulated energy, and this is the key step in extending functional inequalities, such as given by Proposition \ref{non positivity}, Proposition \ref{coercivity inequality} and Theorem \ref{commutator estimate}, 
to the empirical measure.
\end{rem}

\section*{Acknowledgments}
IBP and JAC were supported by EPSRC grant number EP/V051121/1.
The research of JAC was supported by the Advanced Grant Nonlocal\--CPD (Non\-local PDEs for Complex Particle Dynamics: Phase Transitions, Patterns and Synchronization) of the European Research Council Executive Agency (ERC) under the European Union's Horizon 2020 research and innovation programme (grant agreement No. 883363).
JAC was also partially supported by the ''Maria de Maeztu'' Excellence Unit IMAG, reference CEX2020-001105-M, funded by MCIN/AEI/10.13039/501100011033/.



\begin{thebibliography}{10}
\bibitem[1]{aoki2009co} T. Aoki and T. Aoyagi. Co-evolution of phases and connection strengths in a network of phase
oscillators. Physical Review Letters, 102(3):034101, 2009.


\bibitem[2]{ayi2021mean} N. Ayi and N. P. Duteil. Mean-field and graph limits for collective dynamics models with time-
varying weights. Journal of Differential Equations, 299:65--110, 2021.

\bibitem[3]{ayi2024large}  N. Ayi and N. P. Duteil. Large-population limits of non-exchangeable particle systems. Active
Particles, Volume 4, pages 79--133, 2024.
\bibitem[4]{bahouri2011fourier} H. Bahouri. Fourier analysis and nonlinear partial differential equations. Springer, 2011.

\bibitem[5]{ben2024mean}  I. Ben-Porat, J. A. Carrillo, and S. T. Galtung. Mean field limit for one dimensional opinion
dynamics with coulomb interaction and time dependent weights. Nonlinear Analysis, 240:113462,
2024.
\bibitem[6]{ben2024graph} I. Ben-Porat, J. A. Carrillo, and P.-E. Jabin. The graph limit for a pairwise competition model.
Journal of Differential Equations, 413:329--369, 2024.
\bibitem[7]{biccari2019dynamics}
U. Biccari, D. Ko, and E. Zuazua. Dynamics and control for multi-agent networked systems: A
finite-difference approach. Mathematical Models and Methods in Applied Sciences, 29(04):755--790,
2019.
\bibitem[8]{bresch2019modulated} D. Bresch, P.-E. Jabin, and Z. Wang. Modulated free energy and mean field limit. Séminaire
Laurent Schwartz-EDP et applications, pages 1--22, 2019.
\bibitem[9]{ceragioli2021generalized}
F. Ceragioli, P. Frasca, B. Piccoli, and F. Rossi. Generalized solutions to opinion dynamics
models with discontinuities. In Crowd Dynamics, Volume 3: Modeling and Social Applications in
the Time of COVID-19, pages 11--47. Springer, 2021.
\bibitem[10]{cestnik2024continuum}
R. Cestnik and E. A. Martens. Continuum limit of the adaptive Kuramoto model. arXiv preprint
arXiv:2407.03433, 2024.
\bibitem[11]{dobrushin1979vlasov}
R. L. Dobrushin. Vlasov equations. Functional Analysis and Its Applications, 13(2):115--123,
1979.
\bibitem[12]{duerinckx2020mean}M. Duerinckx and S. Serfaty. Mean field limit for coulomb-type flows. Duke Mathematical Journal,
169(15):2887--2935, 2020.
\bibitem[13]{duoandikoetxea2024fourier} J. Duoandikoetxea. Fourier analysis, volume 29. American Mathematical Society, 2024.
\bibitem[14]{duteil2021mean} N. P. Duteil. Mean-field limit of collective dynamics with time-varying weights. arXiv preprint
arXiv:2103.06527, 2021.
\bibitem[15]{golse2022mean} F. Golse and T. Paul. Mean-field and classical limit for the N-body quantum dynamics with
coulomb interaction. Communications on Pure and Applied Mathematics, 75(6):1332--1376, 2022.
\bibitem[16]{han2021newton} D. Han-Kwan and M. Iacobelli. From newton's second law to euler's equations of perfect fluids.
Proceedings of the American Mathematical Society, 149(7):3045--3061, 2021.
\bibitem[17]{jabin2024mean} P.-E. Jabin, D. Poyato, and J. Soler. Mean-field limit of non-exchangeable systems. Communica-
tions on Pure and Applied Mathematics, 2024.
\bibitem[18]{mcquade2019social}  S. McQuade, B. Piccoli, and N. Pouradier Duteil. Social dynamics models with time-varying
influence. Mathematical Models and Methods in Applied Sciences, 29(04):681--716, 2019.
\bibitem[19]{menard2024mean}  M. M\'enard. Mean-field limit derivation of a monokinetic spray model with gyroscopic effects.
SIAM Journal on Mathematical Analysis, 56(1):1068--1113, 2024.
\bibitem[20]{paul2022microscopic} T. Paul and E. Tr\'elat. From microscopic to macroscopic scale equations: mean field, hydrodynamic
and graph limits. arXiv preprint arXiv:2209.08832, 2022.
\bibitem[21]{piccoli2014generalized} B. Piccoli and F. Rossi. Generalized wasserstein distance and its application to transport equations
with source. Archive for Rational Mechanics and Analysis, 211:335--358, 2014.
\bibitem[22]{piccoli2018measure} B. Piccoli and F. Rossi. Measure-theoretic models for crowd dynamics. Crowd Dynamics, Volume
1: Theory, Models, and Safety Problems, pages 137--165, 2018.
\bibitem[23]{rosenzweig2020justification} M. Rosenzweig. Justification of the point vortex approximation for modified surface quasi-
geostrophic equations. SIAM Journal on Mathematical Analysis, 52(2):1690--1728, 2020.
\bibitem[24]{rosenzweig2021quantum} M. Rosenzweig. From quantum many-body systems to ideal fluids. arXiv preprint
arXiv:2110.04195, 2021.
\bibitem[25]{villani2009optimal}  C. Villani et al. Optimal transport: old and new, volume 338. Springer, 2009.
\end{thebibliography}
\end{document}